\newtheorem{theorem}{Theorem}[section]
\newtheorem{lemma}[theorem]{Lemma}
\newtheorem{proposition}[theorem]{Proposition}
\newtheorem{corollary}[theorem]{Corollary}
\theoremstyle{definition}
\newtheorem{remark}[theorem]{Remark}
\newtheorem{example}[theorem]{Example}
\DeclareMathOperator{\ext}{ext}
\DeclareMathOperator{\re}{Re}
\DeclareMathOperator{\proj}{proj}
\DeclareMathOperator{\spanned}{span}
\newcommand{\HRe}{H_{\infty}^{\lambda}[\re >0]}
\newcommand{\HRep}{H_{\infty,+}^{\lambda}[\re >0]}
\newcommand{\HRek}[1]{H_{\infty}^{\lambda}[\re > 1/#1]}
\newcommand{\HRevv}[1]{H_{\infty}^{\lambda}([\re > 0], #1)}
\newcommand{\punkt}{\,\begin{picture}(-1,1)(-1,-3)\circle*{2.5}\end{picture}\;\,}
\newcommand\restrict[1]{\raisebox{-.5ex}{$\vert$}_{#1}}
\title{Fr\'echet spaces of general Dirichlet series}
\author{Andreas Defant\thanks{Institut f\"ur Mathematik, Carl von Ossietzky Universit\"at, 26111 Oldenburg, Germany (defant@mathematik.uni-oldenburg.de).Partially supported by MINECO and FEDER project MTM2017-83262-C2-1-P}%
\and %
Tom\'as Fern\'andez-Vidal\thanks{Departamento de Matem\'{a}tica,
Facultad de Cs. Exactas y Naturales, Universidad de Buenos Aires and IMAS-CONICET. Ciudad Universitaria, Pabell\'on I (C1428EGA) C.A.B.A., Argentina (tfernandezvidal@yahoo.com.ar). Supported by PICT 2015-2299} %
\and %
Ingo Schoolmann\thanks{Institut f\"ur Mathematik, Carl von Ossietzky Universit\"at, 26111 Oldenburg, Germany (ingo.schoolmann@uni-oldenburg.de)}%
\and %
Pablo Sevilla-Peris\thanks{Institut Universitari de Matem\`atica Pura i Aplicada,
Universitat Polit\`{e}cnica de Val\`encia, cmno Vera s/n, 46022,
Val\`encia, Spain (psevilla@mat.upv.es). Supported by MINECO and FEDER project MTM2017-83262-C2-1-P}}
\date{}
\begin{document}

\maketitle

\begin{abstract}
Inspired by  a recent article  on  Fr\'echet spaces of  ordinary Dirichlet series
$\sum a_n n^{-s}$ due to J.~Bonet,
we study  topological and geometrical properties  of certain scales of Fr\'echet spaces of general Dirichlet spaces
$\sum a_n e^{-\lambda_n s}$. More precisely, fixing a frequency $\lambda = (\lambda_n)$, we focus on the  Fr\'echet space of  $\lambda$-Dirichlet series
which have limit functions  bounded on all half planes strictly smaller than the  right half plane $[\re >0]$. We develop an abstract setting
of pre-Fr\'echet spaces of $\lambda$-Dirichlet series generated by certain admissible normed spaces of
$\lambda$-Dirichlet series and the abscissas of convergence they generate, which allows also to define  Fr\'echet spaces of $\lambda$-Dirichlet series for which  $a_n e^{-\lambda_n/k}$ for each $k$ equals  the Fourier coefficients of a function on an appropriate  $\lambda$-Dirichlet group.
\end{abstract}

\section{Introduction} \label{intro}

Given a frequency $\lambda = (\lambda_n)$, i.e. a strictly increasing unbounded sequence of non-negative real numbers,
a $\lambda$-Dirichlet series  is a (formal) series of the form $D=\sum a_{n}e^{-\lambda_{n}s}$, where $s$ is a complex variable and the $a_n \in \mathbb{C}$ the Dirichlet coefficients.
It is a well known fact that general Dirichlet series naturally converge on half planes $[\re > \sigma]$, and there they define holomorphic functions (see \cite[Theorem~2]{hardy2013general} or \cite[Lemma~4.1.1]{queffelec2013diophantine}).\\
The study of these series has a long history initiated at the beginning of the 20th century by  prominent mathematicians like H.~Bohr, G.H.~Hardy, and M.~Riesz, among others. One of their main contributions was the study of the analytic properties of the functions defined by general Dirichlet series. The most important example of a frequency
is certainly  given by $\lambda = (\log n)$, leading to  ordinary Dirichlet series $\sum a_n n^{-s}$, which play a fundamental role in analytic number theory. \\

In recent years there has been a revival of interest in the interplay between analysis and Dirichlet series opened up by those early contributions. This
`modern theory of Dirichlet series' mainly focuses on the study of  ordinary series, which  involves the intertwining
of classical work with modern analysis -- like functional analysis, harmonic analysis, infinite dimensional holomorphy, probability theory, as well as analytic number theory. \\
The space of Dirichlet series that define a bounded holomorphic function on $[\re >0]$ plays a major role within this modern approach. Bonet in  \cite{bonet2018frechet} defined and studied the Fr\'echet space of all (ordinary) Dirichlet
series which converge (and hence define a holomorphic function) on $[\re >0]$ and are bounded on every smaller half plane $[\re > \sigma]$ for $\sigma >0$. Given a frequency $\lambda$, the space $\mathcal{D}_{\infty}(\lambda)$ of all
$\lambda$-Dirichlet series that define a bounded holomorphic function on $[\re >0]$ was defined in \cite{schoolmann2018bohr}. Inspired by the work of Bonet, in this article we focus on the space
$\mathcal{D}_{\infty,+}(\lambda)$ of all $\lambda$-Dirichlet series that on $[\re >0]$ converge to a (then necessarily holomorphic) function which is bounded on each half plane $[\re > \sigma]$ with $\sigma >0$.\\
Carrying its  natural topological structure, this space $\mathcal{D}_{\infty,+}(\lambda)$  is a pre-Fr\'echet space which (as we will see) in general fails to be a Fr\'echet space. Bonet proved in \cite{bonet2018frechet} that the topological structure of $\mathcal{D}_{\infty,+}((\log n))$  is rich. It is a  Fr\'echet  algebra
which is a  Schwartz space, and the monomials  $(n^{-s})$ form a  Schauder basis, but which is not nuclear.
His  proofs combine modern techniques from the theory of ordinary Dirichlet series like Bohr's inequality or Bayart's Montel theorem with classical results on K\"othe sequence spaces like the Grothendieck-Pietsch test for nuclearity.\\

Making the jump from the frequency $(\log n)$ to an arbitrary frequency reveals challenging consequences. For example, much of the theory for ordinary series relies on `Bohr's theorem', which in particular implies that  each
ordinary Dirichlet series which converges to a bounded function on some half plane
$[\re > \sigma]$, in fact converges uniformly on each smaller half plane $[\re > \mu]$ with $\mu > \sigma$. However, for general Dirichlet series, it is known that the  validity of Bohr's theorem depends very much on the `structure' of the frequency (see Section~\ref{sec:equivalence}).\\
As a consequence, for Dirichlet series build over an arbitrary frequency $\lambda$, the general occurrence for $\mathcal{D}_{\infty,+}(\lambda)$ is much  more complex. To illustrate this, consider the frequency $\lambda = (n)$. 
Then, looking at the change of variables $s \in [\re >0] \leftrightsquigarrow z = e^{-s} \in \mathbb{D}$, each Dirichlet series $\sum a_{n} e^{-ns}$ is transformed into a  power series $\sum a_{n} z^{n}$. It turns out that
$\mathcal{D}_{\infty,+}((n))$ is nothing else than  the nuclear Fr\'echet space $H(\mathbb{D})$ of all holomorphic functions $f: \mathbb{D} \to \mathbb{C}$ (with the topology of uniform convergence on compact sets).
In particular,  $\mathcal{D}_{\infty,+}((n))$ is isomorphic to a countable projective limit of Banach spaces, all isometrically equal to the Hardy space $H_\infty(\mathbb{T})$, which relates its study with Fourier analysis on a
compact abelian group.\\
A third natural example of frequency is $\lambda = (\log p_n)$, where $p_n$ stands for the $n$th prime. We show that for this frequency  $\mathcal{D}_{\infty,+}(\lambda)$ is a Fr\'echet space that, by a result of Bohr, may be
identified with a K\"othe echelon space (a projective limit of countably many weighted $\ell_1$-spaces) which, though  Schwartz, again fails to be nuclear. \\

One of our main purposes here is to clarify the situation, studying the structure of the pre-Fr\'echet spaces $\mathcal{D}_{\infty,+}(\lambda)$ depending on the frequency $\lambda$. First of all we see that these are always
Schwartz. Then we focus on the following properties: completeness, barrelledness, Montel, the monomials being a Schauder basis and nuclearity. We show that for $\mathcal{D}_{\infty,+}(\lambda)$ the first three properties are
equivalent, and that they hold if and only if Bohr's theorem holds for $\lambda$ and in this case, the space can be identified with a countable projective limit of certain Hardy spaces on so-called Dirichlet groups and the limit functions defined by the Dirichlet series in $\mathcal{D}_{\infty,+}(\lambda)$ have a natural description in terms of uniformly almost periodic functions on the right half plane. The monomials are a Schauder basis whenever Bohr's theorem holds for $\lambda$. Finally we also characterise those frequencies for which the space
$\mathcal{D}_{\infty,+}(\lambda)$ is nuclear.\\
We present a more general setting, which allows to study various similar types of (pre-)Fr\'echet spaces of general Dirichlet series with similar ideas. Fixing a frequency $\lambda$, the idea is to study  (pre-)Fr\'echet spaces
of $\lambda$-Dirichlet series which are  generated by what we call a `$\lambda$-admissible'
normed space of $\lambda$-Dirichlet series.
This allows to incorporate not only $\mathcal{D}_{\infty,+}(\lambda)$ in our study, but also Hardy-type Fr\'echet spaces generated by $\mathcal{H}_p(\lambda)$ with $1 \leq p \leq \infty$, following what was done in \cite{FVGaMeSe_20}.\\
One of our main tools is the representation of our pre-Fr\'echet spaces as countable projective limits of their natural `Banach space precursors' ($\mathcal{D}_{\infty}(\lambda)$ and $\mathcal{H}_{p}(\lambda)$).
In this sense our article continues  a series of recent articles on general Dirichlet series (see \cite{CaDeMaSc_VV,defant2019hardy,defant2020variants,schoolmann2018bohr,schoolmann2018bohrA}),  which
combine classical results from the deep analysis presented by  Hardy and Riesz in
 \cite{hardy2013general}, with various topics from modern analysis (as complex analysis, functional analysis
in Banach and Fr\'echet spaces, Fourier analysis on $\mathbb{R}$, or harmonic analysis on compact abelian groups). \\

Finally, we remark that the  study  on Fr\'echet spaces of general Dirichlet series  undertaken here, forces us to consider  independently interesting issues within related Banach spaces -- such as the hypercontractivity of
translation operators, or  Montel-type theorems for Banach spaces of uniformly $\lambda$-almost-periodic functions and for Hardy-type  spaces of $\lambda$-Dirichlet series.

\section{Preliminaries}
We collect the basic results on Dirichlet series and Fr\'echet spaces needed in following.

\subsection{Dirichlet series} \label{diri}

We begin with a short account of the basic facts on general Dirichlet series that will be needed along the article. We refer the reader to \cite{defant2018Dirichlet,hardy2013general,queffelec2013diophantine} for the basics on ordinary and general Dirichlet series.\\
Given a frequency $\lambda$, all (formal) $\lambda$-Dirichlet series $\sum a_n e^{-\lambda_n s}$  are denoted by $\mathfrak{D}(\lambda)$.
The following `abscissas' rule the convergence theory of general  Dirichlet series $D=\sum a_{n}e^{-\lambda_{n}s}$:
\begin{gather*}
\sigma_{c}(D)=\inf  \{ \sigma \in \mathbb{R} \colon D \text{ converges on } [\re >\sigma] \},
\\
\sigma_{a}(D)=\inf \{ \sigma \in \mathbb{R} \colon D \text{ converges absolutely on } [\re>\sigma] \},
\\
\sigma_{u}(D)=\inf \{ \sigma \in \mathbb{R}\colon D \text{ converges uniformly on } [\re>\sigma] \},
\\
\sigma_{b}(D)=\inf \{ \sigma\in \mathbb{R}  \colon  D \text{ converges and defines a bounded function on } [\re>\sigma] \} \,.
\end{gather*}
By definition $\sigma_{c}(D)\le \sigma_{b}(D) \le \sigma_{u}(D)\le \sigma_{a}(D)$, and in  general all these abscissas differ. Let us recall once again that a general Dirichlet series $D = \sum a_{n} e^{- \lambda_{n}s}$ defines a holomorphic functions on the half plane $[\re >\sigma_{c}(D)]$. \\

Another important, say geometric, value associated to a frequency $\lambda$ is the maximal width of the strip of convergence and non absolutely convergence, that is
\[
L(\lambda) : =\sup_{D \in \mathfrak{D}(\lambda)} \sigma_{a}(D)-\sigma_{c}(D),
\]
which, as shown by Bohr  in \cite[\S3, Hilfssatz~2 and~3]{BohrBemerkungen}, can be computed as follows
\[
L (\lambda ) = \sigma_{c} \big(\textstyle \sum e^{-\lambda_{n}s} \big)
= \displaystyle \limsup_{n\to \infty} \frac{\log(n)}{\lambda_{n}} \,.
\]

As we already pointed out earlier, fulfilling Bohr's theorem is one of the key properties within the theory. Let us succinctly explain what does that mean. Let   $\mathcal{D}_{\infty}^{\ext}(\lambda)$ denote the space of $\lambda$-Dirichlet series that
converge somewhere, and whose limit function extends  to a bounded holomorphic function on $[\re >0]$. Then the frequency $\lambda$ is said to satisfy `Bohr's theorem' (or that Bohr's theorem holds for $\lambda$) if $\sigma_{u}(D) \leq 0$ for
every $D \in \mathcal{D}_{\infty}^{\ext}(\lambda)$.\\
The question then is to find conditions on $\lambda$ so that this property holds. The first one to address this question was Bohr  (thus explaining the name), who in \cite{Bohr} isolated a concrete sufficient condition which roughly speaking prevents the
$\lambda_n$s from getting too close too fast. More precisely, he showed that if $\lambda$ satisfies what we now call `Bohr's condition':
\begin{equation} \label{BC} \tag{BC}
\exists ~l = l (\lambda) >0 ~ \forall ~\delta >0 ~\exists ~C>0~\forall~ n \in \mathbb{N}: ~~\lambda_{n+1}-\lambda_{n}\ge Ce^{-(l+\delta)\lambda_{n}}\,,
\end{equation}
then it satisfies Bohr's theorem. Note that $\lambda=(\log n)$ satisfies \eqref{BC} with $l=1$ and, then, Bohr's theorem holds for ordinary Dirichlet series. This is one of the   fundamental tools within the theory of ordinary Dirichlet series (see
\cite[Theorem~1.13]{defant2018Dirichlet} or \cite[Theorem~6.2.2]{queffelec2013diophantine}).\\
Later  Landau in \cite{Landau} improved Bohr's result by showing  that the weaker condition
\begin{equation} \label{LC} \tag{LC}
\forall~ \delta>0~ \exists ~C>0 ~\forall~ n \in \mathbb{N} \colon~ \lambda_{n+1}-\lambda_{n}\ge C e^{-e^{\delta \lambda_{n}}}.
\end{equation}
is also sufficient for Bohr's theorem. Observe that \eqref{BC} implies \eqref{LC}, and that  the frequencies $\lambda =( (\log n)^{\alpha})$ satisfy \eqref{LC} for every $\alpha >0$ but for example $\lambda=(\sqrt{\log n})$  (i.e. $\alpha = 1/2$) fails \eqref{BC}.\\
 We know (see e.g. \cite[Remark~4.8.]{schoolmann2018bohr}) that Bohr's theorem holds for $\lambda$ in each of the following `testable' cases:
\begin{itemize}
  \item   $\lambda$ is $\mathbb{Q}$-linearly independent,
 \item $L(\lambda):=\limsup_{n\to \infty} \frac{\log n}{\lambda_{n}}=0$,
 \item  $\lambda$ fulfills \eqref{LC} (and  in particular, if it fulfills \eqref{BC}).
\end{itemize}
Then, Bohr's theorem holds for the frequencies $\lambda = (\log p_{n})$ (because it is $\mathbb{Q}$-l.i.), $\lambda = (n)$ (for which $L(\lambda) =0$) and $\lambda =( (\log n)^{\alpha})$ for $\alpha >0$ (since, as we just mentioned, it satisfies \eqref{LC}). Recently some other sufficient conditions have been found by Bayart \cite{bayart_hyper}.

\subsection{Fr\'echet spaces} \label{prelim}

We collect here some basic definitions and facts on Fr\'echet that we need all along this article -- all results mentioned are included in the monographs \cite{FlWl68,jarchow2012locally,meise1997introduction}.\\

Let $E$ be vector space and  $\mathcal{P}$  a family of seminorms satisfying the following two conditions: first, for every $x \in E$ there is $p \in \mathcal{P}$ so that $p(x) \neq 0$ and, second,  that for  all
$p_{1}, p_{2} \in \mathcal{P}$ there is some $c>0$ and $p \in \mathcal{P}$ with $\max (p_{1} (x), p_{2} (x)) \leq c p (x)$ for every  $x \in E$. Then the pair $(E,\mathcal{P})$ defines a (locally convex Hausdorff) topology  on $E$ in the following way. A set $O \subset E$
is open whenever for each $x \in O$ there are  $p \in \mathcal{P}$ and  $\varepsilon >0$ so that
$\{x \in E\colon p(x)< \varepsilon \} \subseteq O$.\\
A net $(x_\alpha)$ in $(E, \mathcal{P})$ is Cauchy if for each $p \in \mathcal{P}$ and each $\varepsilon >0$ there is some $\alpha_0$ such that for all
$\alpha_1,\alpha_2 > \alpha_0$ we have $p(x_{\alpha_1}- x_{\alpha_2}) < \varepsilon$. A locally convex space is said to be complete if every Cauchy net in $E$ is convergent.\\

For each seminorm  $p \in \mathcal{P}$ we consider the normed space $(E_p, \|\punkt\|_p)$ given by
\[
E_p:= E/\ker p \, \text{ and } \, \big\|x + \ker p \big\|_p: = p(x) \,,
\]
and  for all $p, q \in \mathcal{P}$ for which there is some $c>0$ such that $q \leq c p$, we may define the (so-called) linking maps
\[
\pi_{p,q}: E_p \to E_q\, \text{ by } x + \ker p   \mapsto x + \ker q\,,
\]
which are all linear with norm $\leq c$.\\
Then, $E$ is called Schwartz (resp. nuclear) if for every
$q \in \mathcal{P}$ there are $p \in \mathcal{P}$  and $c >0$ with $q\leq cp$ such that
$\pi_{p,q}: E_p \to E_q$ is precompact (resp. nuclear). Recall that a (bounded, linear) operator
$u:X \to Y$  between  normed spaces is precompact whenever $u$ maps the unit ball of $X$ into a precompact set of $Y$, and  it is is nuclear whenever there are sequences
$(x_n^\ast)$ in $X^\ast$ and $(y_n)$ in $Y$ such that $\sum_n \|x_n^\ast\| \,\|y_n\| < \infty$ and  $u(x) = \sum_n x_n^\ast(x) y_n$ for all $x \in X$.\\

A locally convex space $(E, \mathcal{P})$ is
\begin{itemize}
\item barrelled if every barrel (i.e., every absolutely convex, closed, and absorbing set in $E$) is a zero-neighbourhood, or equivalently, if it satisfies the uniform boundedness principle (every pointwise bounded set of continuous operators  from $E$ into some  locally convex space $F$ is equicontinuous).

\item  semi Montel if every bounded set is relatively compact.

\item Montel if it is barrelled and semi-Montel.

\item pre-Fr\'echet if $\mathcal{P} = \{ p_k\colon k \in \mathbb{N}\}$ is countable, and in this case
we may assume without loss of generality that  the seminorms are increasing.

\item Fr\'echet if it is pre-Fr\'echet and complete.
\end{itemize}
A standard argument shows that the locally convex topology of a pre-Fr\'echet space is given by a translation invariant metric. It is important to note that every Fr\'echet space is barreled, and that Fr\'echet-Schwartz spaces are Montel.\\

 A sequence $(e_{n})_{n}$ in a locally convex  space $E$ is a Schauder basis if for every $x \in E$ there is a unique sequence $( \alpha_{n} )_{n}$ of scalars such  that $x = \sum_{n=1}^{\infty} \alpha_{n} e_{n}$. In
this case all coefficient functionals $e_n^\ast$ defined by $e_n^\ast(x) = \alpha_n$ are continuous.
If  $(e_{n})$ is a Schauder basis of a Fr\'echet (or more generally barreled) space $(E, \mathcal{P})$, then for every $p \in \mathcal{P}$ there is $q \in \mathcal{P}$ and a constant $C>0$ such that for every $M\ge N$ and every complex sequences $(\alpha_{n})$
\begin{equation} \label{basisineq}
p\big(\sum_{n=1}^{N} \alpha_{n} e_{n}\big)\le C \,q\big(\sum_{n=1}^{M} \alpha_{n}e_{n} \big).
\end{equation}

Let $(X_{k})_{k}$ be a (countable) family of normed spaces and, for each $k$ consider a bounded linear operator $i_{k}: X_{k+1} \to X_k$. Then the pair
\[
\big( X_k, i_{k}\big)_{k\in\mathbb{N}}
\]
is called a countable projective spectrum. The projective limit $\proj X_k$ is defined to be the topological subspace of $\prod_k X_k$ consisting of those $(x_k)$ so that $i_k(x_{k+1})= x_k$ for every $k$. If we denote by $\pi_{n}$ the
canonical projection from $\proj X_{k}$ to $X_{n}$, then
\begin{equation} \label{clementi}
p_{n} (x) = \max_{1 \leq m \leq n}\|\pi_m(x)\|_{X_m}
\end{equation}
defines a seminorm on $\proj X_{k}$. It is easy to see that the collection of all these seminorms  generates a locally convex topology on $\proj X_{k}$ that coincides
with the one induced by $\prod X_{k}$. Hence the projective limit of countably many normed spaces is always a pre-Fr\'echet space. If every $X_{k}$ is Banach, then $\prod_k X_k$ is complete and, then so also is the closed
subspace $\proj X_k$ (then a Fr\'echet space). \\
To see an example, we recall that a real matrix $A = (a_{jk})_{j,k=1}^\infty$ is said to be a (positive) K\"othe matrix, whenever  $0 < a_{jk} < a_{j, k+1}$ for all $k,j$. Then, given   $1 \leq p < \infty$, each of the weighted
$\ell_p$-spaces
\[
\ell_p( (a_{j,k})_{j=1}^\infty) = \Big\{  x \in \mathbb{C}^{\mathbb{N}}
\colon \|x\|_k = \Big( \sum_{j=1}^\infty |a_{jk} x_j|^p  \Big)^{\frac{1}{p}} < \infty
\Big\}\,, \,\,\,k \in \mathbb{N}.
\]
is  obviously isometrically isomorphic to $\ell_p$. Together with the canonical inclusions these form a countable projective spectrum which defines the Fr\'echet space
\begin{equation} \label{Koethe}
\ell_{p} (A) = \proj\ell_p( (a_{j,k})_{j=1}^\infty) = \ \Big\{  x \in \mathbb{C}^{\mathbb{N}}
\colon \|x\|_k = \Big( \sum_{j=1}^\infty |a_{jk} x_j|^p  \Big)^{\frac{1}{p}} < \infty
\,\, \,\text{ for } k \in \mathbb{N}
\Big\}\,.
\end{equation}
 Replacing $\ell_{p}$ by the space $c_{0}$ of null sequences and proceeding in the same way $c_{0}(A)$ is defined.

\begin{remark} \label{proj}
Assume that $\big( X_k, i_{k}\big)_{k\in\mathbb{N}}$ and $\big( Y_k, j_{k}\big)_{k\in\mathbb{N}}$ are two  projective spectra of normed spaces, and denote by $\pi_{m}$ and $\rho_{m}$ (for each $m$) the corresponding projection into
$X_{m}$ and $Y_{m}$. If we have a family of bounded operators $\{ S_k: X_k \to Y_k \}_{k}$ satisfying $S_k \circ i_k = j_k \circ S_{k+1}$ for each $k$, then the operator
\[
S: \proj X_k \to \proj Y_k \, \text{ given by } \, (x_k) \mapsto (S_k x_k)
\]
is obviously well defined. It is also continuous, since for each $m$ we have $\rho_{m} \circ S = S_{m} \circ \pi_{m}$.  Clearly, if all $S_k$ are continuous bijections with continuous inverse, then $S$ is an isomorphism of pre-Fr\'echet spaces.
\end{remark}

\begin{remark}  \label{proj1}
If in a projective limit  $X =\proj X_k$ we consider the canonical seminorm defined in \eqref{clementi}, it is easily seen that $X_{p_n} =\bigoplus_{k=1}^n X_{k}$ holds isometrically. Taking  the cartesian product of finitely many precompact
(resp. nuclear) operators in Banach spaces  again leads to a precompact (resp. nuclear) operator. As a consequence, $X$  is Schwartz (resp. nuclear) whenever for each $k$ there is $m > k$ such that the canonical mapping from $X_m$ into $X_k$ given
 by $i_{m,k}=i_{m-1} \circ \cdots \circ i_k$ is precompact (resp. nuclear).
\end{remark}

\section{Banach space protagonists}
Here we recall some definitions and facts on the 'underlying Banach spaces' of the Fr\'echet spaces of
$\lambda$-Dirichlet series, which we later intend to study. In Section~\ref{sec.montel} we add
new information on Hardy spaces of general Dirichlet series, which seems of independent interest.

\subsection{Hardy spaces}
As we have already seen, $\mathcal{D}_{\infty}(\lambda)$ is perhaps the most important space within the theory of general Dirichlet series; but it is not the only one.  There is also the scale of Hardy spaces $\mathcal{H}_{p}(\lambda)$ of Dirichlet
series, which was introduced in \cite{defant2019hardy}.\\
Given a frequency, a $\lambda$-Dirichlet polynomial is just a finite $\lambda$-Dirichlet series  $\sum_{n=1}^{N} a_{n} e^{-\lambda_{n}s}$. For every such polynomial and $1 \leq p < \infty$
\begin{equation*}
\bigg(\lim_{T \to \infty} \frac{1}{2T} \int_{-T}^{T}\Big|\sum^{N}_{n=1} a_{n}e^{-\lambda_{n}it} \Big|^{p} dt\bigg)^{\frac{1}{p}}\,
\end{equation*}
exists, and in this way one defines a norm on the space of all $\lambda$-Dirichlet polynomial. The Hardy space $\mathcal{H}_{p} (\lambda)$ is defined as the completion of this space. \\
This definition makes the space difficult to handle. There is however a different, more convenient approach that links these spaces with Fourier analysis on groups (see \cite[Section~3]{defantschoolmann2019Hptheory}). This requires a little bit of  preparation.\\

First of all, all characters on the group $(\mathbb{R}, +)$ are of the form $t\mapsto e^{-ixt}$, where $x\in \mathbb{R}$. Now, let $G$ be a compact abelian group and
$\beta : (\mathbb{R}, +) \to G$ a continuous homomorphism with dense range. Then for every character $\gamma \in \hat{G}$ there is a (unique) $x \in \mathbb{R}$ so that $\gamma \circ \beta(t) = e^{-itx}$ for all $t\in \mathbb{R}$ (for simplicity we write $\gamma=h_{x}$). We then
identify $\hat{G} = \{ h_{x} \colon x \in \hat{\beta} (\hat{G}) \}$. With this, the pair $(G, \beta)$ is said to be a $\lambda$-Dirichlet group if for every $n \in \mathbb{N}$ there is a (unique) character $\gamma \in \hat{G}$ so that $\gamma = h_{\lambda_{n}}$.\\
For every frequency such an object exists. The Bohr compactification  $\overline{\mathbb{R}}:=\widehat{(\mathbb{R},d)}$ of $\mathbb{R}$ with $d$ the discrete topology together with the embedding
$
\beta_{\overline{\mathbb{R}}}\colon \mathbb{R} \hookrightarrow \overline{\mathbb{R}}$ given by $x \mapsto \left[ t \mapsto e^{-ixt}\right]\,,
$
forms a Dirichlet group,
which obviously for any arbitrary frequency $\lambda$ serves as a $\lambda$-Dirichlet group.
Below we indicate that special $\lambda$s often allow $\lambda$-Dirichlet groups which are more adjusted
to the concrete structure of the sequence.\\

Given a $\lambda$-Dirichlet group $(G, \beta)$  and $1 \leq p \leq \infty$, the Hardy space $H_{p}^{\lambda} (G)$ is defined as the closed subspace of $L_{p}(G)$ consisting of those $f$ whose Fourier
coefficients
\[
\widehat{f} (h_{x}) = \int_{G} f(t) \overline{h_{x}(t)} d \mu(t)
\]
are $0$ whenever $x \not\in \{\lambda_{n} \colon n \in \mathbb{N}  \}$. With this the space $\mathcal{H}_{p}(\lambda)$ is defined as
\[
\mathcal{H}_{p}(\lambda) =\big\{ \sum{a_{n}e^{-\lambda_{n} s}} \colon \text{ there is (a unique)\,}  f \in H_{p}^{\lambda}(G),  \text{ with } \; a_{n}=\widehat{f}(h_{\lambda_{n}})  \text{ for all }  n\in\mathbb{N}\big\}\,,
\]
and the definition does not depend on the choice of the $\lambda$-Dirichlet group
\cite[Theorem~3.24]{defant2019hardy}.
This is a Banach space with the norm given by $\big\Vert \sum{a_{n}e^{-\lambda_{n} s}} \big\Vert_{\mathcal{H}_{p}(\lambda)}=\Vert f \Vert_{L_{p}(G)}$,
whenever $\sum{a_{n}e^{-\lambda_{n} s}}$ and $f$ are related to each other. Let us note that for $1 \leq p < \infty$ this Banach space coincides with the definition that we gave above (see \cite[Theorem 3.26]{defantschoolmann2019Hptheory}) -- but, moreover, in this way we have a proper definition for $\mathcal{H}_{\infty}(\lambda)$.\\

We finish this section by describing $\lambda$-Dirichlet groups for some of our basic examples of frequencies, and what do the corresponding Hardy spaces look like. For  $\lambda=(\log n)$ we denote by $\mathfrak{p}=(p_{n})$
the sequence of prime numbers. Then the infinite dimensional torus  $\mathbb{T}^{\infty}:=\prod_{n=1}^{\infty} \mathbb{T}$ (with its natural group structure) together with the so-called Kronecker flow
\[
\beta_{\mathbb{T}^{\infty}}\colon \mathbb{R} \to \mathbb{T}^{\infty} \text{ defined as } t \mapsto \mathfrak{p}^{-it}=(2^{-it},3^{-it}, 5^{-it}, \ldots),
\]
gives a  $(\log n)$-Dirichlet group. Then $f \in H_p^{(\log n)}(\mathbb{T}^\infty)$ if and only if $f \in L_p(\mathbb{T}^\infty)$ and the Fourier coefficient $\hat{f}(\alpha) = 0$ for any finite sequence
$\alpha = (\alpha_k)$ of integers with  $\alpha_k < 0$ for some $k$. In other terms,
\[
H_p(\mathbb{T}^\infty): = H_p^{(\log n)}(\mathbb{T}^\infty) = \mathcal{H}_p((\log n))
\]
holds isometrically, and $h_{\log n} = z^\alpha$ whenever $n=\mathfrak{p}^\alpha$.
So the above definition of $\mathcal{H}_{p}(\lambda)$  actually coincides with Bayart's definition of
$\mathcal{H}_{p}((\log n))$ in the ordinary case given in  \cite{bayart2002hardy}.\\
The second example is the frequency  $\lambda=(n)=(0,1,2,\ldots)$. Then $G:=\mathbb{T}$ together with $\beta_{\mathbb{T}}(t):=e^{-it}$ is a $(n)$-Dirichlet group, and $\mathcal{H}_{p}((n))$ equals the classical Hardy space $H_{p}(\mathbb{T}):= H_{p}^{(n)}(\mathbb{T}) $.

\subsection{Almost periodic functions} \label{periodic}

A continuous function $g : \mathbb{R} \to \mathbb{C}$ is said to be uniformly almost periodic if for every $\varepsilon >0$ there is $\ell >0$ so
that for every interval $I \subseteq \mathbb{R}$ of length $\ell$ there exists $\tau \in I$ such that
\[
\sup_{x \in \mathbb{R}} \vert g(x) - g(x + \tau) \vert < \varepsilon \,.
\]
Equivalently, a continuous function $g\colon \mathbb{R} \to \mathbb{C}$ is uniformly almost periodic if and only if  it is the uniform limit of trigonometric polynomials of the form $\sum_{n=1}^{N} a_{x_{n}} e^{-itx_{n}}$, where $x_{n} \in \mathbb{R}$ (see \cite[Chapter~1, \S5, $2^\circ$~Theorem, p. 29]{Be54}).\\
A function $f : [ \re > \sigma_{0}] \to \mathbb{C}$ is said to be  uniformly almost periodic if for every for $\sigma > \sigma_{0}$ the function $\mathbb{R} \to \mathbb{C}$ defined as $t \mapsto f(\sigma + i t)$ (which we also will sometimes denote by $f_\sigma=f (\sigma + i \punkt)$) is uniformly  almost periodic.
Given $f : [ \re > \sigma_{0}] \to \mathbb{C}$, bounded and uniformly almost periodic,  for each $x \in \mathbb{R}$ the corresponding Bohr coefficient is defined as
\begin{equation} \label{Bohrcoeffholo}
a_{x}(f) = \lim_{T \to \infty} \frac{1}{2T} \int_{-T}^{T} f(\sigma + it) e^{(\sigma + it)x} dt \,,
\end{equation}
where the integral is convergent for every $\sigma > \sigma_{0}$ and
independent of each such $\sigma$ (see
\cite[page~157]{Be54}). These coefficients are $0$ except for at most countably many $x$, and $f=0$ if and only if $a_{x}(f)=0$ for every
$x$ (see  \cite[pages~158 and~18]{Be54}). The reader is referred to \cite{Be54} for more details on almost periodic functions. \\

With all this, for a given frequency $\lambda$, the space $\HRe$ is defined in  \cite[Definition~2.15]{defant2020riesz} as consisting of those functions $f : [\re >0] \to \mathbb{C}$ which are bounded, holomorphic,
and uniformly almost periodic such that $a_{x}(f)=0$ unless $x = \lambda_{n}$ for some $n$.\\

The following result from \cite[Theorem~2.16]{defant2020riesz} characterizes the limit functions of
Dirichlet series in $\mathcal{H}_\infty(\lambda)$ in terms of almost periodicity.

\begin{theorem} \label{equivalence2}
For every frequency $\lambda$ the identification $f \mapsto
\sum a_{\lambda_n}(f) e^{-\lambda_n s}$ defines  an isometric bijection
\[
\HRe = \mathcal{H}_\infty(\lambda)
\]
preserving Bohr and Dirichlet coefficients.
\end{theorem}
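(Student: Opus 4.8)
The plan is to show that the map $T\colon f \mapsto \sum_n a_{\lambda_n}(f)\, e^{-\lambda_n s}$ and a suitable inverse are well-defined norm-one maps preserving the relevant coefficients; injectivity will then be immediate. By the independence of $\mathcal{H}_\infty(\lambda)$ of the chosen $\lambda$-Dirichlet group (the cited Theorem~3.24 of \cite{defant2019hardy}) I may fix one convenient group $(G,\beta)$, say the Bohr compactification $\overline{\mathbb{R}}$. Throughout I use that, since $\beta$ has dense range, a function on $\mathbb{R}$ is bounded and uniformly almost periodic with spectrum in $\hat{\beta}(\hat{G})$ exactly when it is a pullback $u\circ\beta$ of some $u\in C(G)$, and that under this identification the Bohr coefficient $a_x$ agrees with the Fourier coefficient at $h_x$.

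\emph{Forward inequality $\|Tf\|_{\mathcal{H}_\infty(\lambda)}\le\|f\|_\infty$.} Fix $f\in\HRe$. For each $\sigma>0$ the slice $f_\sigma=f(\sigma+i\punkt)$ is bounded and uniformly almost periodic with spectrum in $\{\lambda_n\}$, hence extends to some $g_\sigma\in H_\infty^\lambda(G)$ with $\|g_\sigma\|_{L_\infty(G)}=\|f_\sigma\|_\infty\le\|f\|_\infty$ and $\widehat{g_\sigma}(h_{\lambda_n})=a_{\lambda_n}(f_\sigma)=e^{-\sigma\lambda_n}a_{\lambda_n}(f)$. Using weak-$*$ compactness of the ball of $L_\infty(G)=L_1(G)^\ast$, I extract a weak-$*$ cluster point $g$ of $(g_\sigma)$ as $\sigma\to 0^+$; then $\|g\|_\infty\le\|f\|_\infty$, while $\widehat{g}(h_{\lambda_n})=a_{\lambda_n}(f)$ and $\widehat{g}(h_x)=0$ for $x\notin\{\lambda_n\}$. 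Thus $g\in H_\infty^\lambda(G)$ witnesses $Tf\in\mathcal{H}_\infty(\lambda)$ with the desired bound.

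\emph{Backward inequality and surjectivity.} Given $g\in H_\infty^\lambda(G)$ I must produce $f\in\HRe$ with $a_{\lambda_n}(f)=\widehat{g}(h_{\lambda_n})$ and $\|f\|_\infty\le\|g\|_\infty$. I define $f$ on $[\re>0]$ by the abstract Poisson extension $f(\sigma+it):=(P_\sigma g)(\beta(t))$, where $P_\sigma$ is convolution on $G$ with the image under $\beta$ of the half-plane Poisson kernel $p_\sigma(t)=\tfrac1\pi\tfrac{\sigma}{\sigma^2+t^2}$; since $\widehat{p_\sigma}(x)=e^{-\sigma|x|}$, this realizes the multiplier $h_{\lambda_n}\mapsto e^{-\sigma\lambda_n}h_{\lambda_n}$ on the spectrum. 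One then checks the semigroup relation $P_\sigma P_\tau=P_{\sigma+\tau}$ and that each $P_\sigma$ is a contraction carrying $g$ to a continuous function on $G$; consequently $f_\sigma=(P_\sigma g)\circ\beta$ is bounded by $\|g\|_\infty$ and uniformly almost periodic, the semigroup together with a Cauchy--Riemann computation makes $s\mapsto f(s)$ holomorphic, and a mean-value computation yields $a_{\lambda_n}(f)=\widehat{g}(h_{\lambda_n})$ with $a_x(f)=0$ otherwise. Hence $f\in\HRe$, $\|f\|_\infty\le\|g\|_\infty$, and $Tf$ is the Dirichlet series attached to $g$, giving both the reverse bound and surjectivity.

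\emph{Conclusion and main obstacle.} The two estimates combine to the isometry $\|Tf\|_{\mathcal{H}_\infty(\lambda)}=\|f\|_\infty$, surjectivity comes from the backward construction, and injectivity is immediate from uniqueness of Bohr coefficients ($f=0$ iff $a_x(f)=0$ for all $x$). I expect the genuine difficulty to lie entirely in the backward step: because the Dirichlet series $\sum\widehat{g}(h_{\lambda_n})e^{-\lambda_n s}$ converges only on $[\re>L(\lambda)]$, the extension $f$ cannot be obtained by summing the series on all of $[\re>0]$. The crux is therefore to prove that the Poisson-regularized functions $P_\sigma g$ are honestly continuous on $G$ and uniformly bounded by $\|g\|_\infty$ (equivalently, that the abstract extension remains bounded, holomorphic, and almost periodic across the whole half-plane); the singularity of the relevant kernels with respect to Haar measure on $G$ is precisely where care is required.
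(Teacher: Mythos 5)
Your forward half is essentially correct: slicing $f$ at $\re s=\sigma$, identifying each slice with an element $u_\sigma$ of $C(G)$ whose Fourier coefficients are $e^{-\sigma\lambda_n}a_{\lambda_n}(f)$, and extracting a weak-$*$ cluster point in the ball of $L_\infty(G)=L_1(G)^\ast$ does produce $g\in H_\infty^\lambda(G)$ with the right coefficients and $\|g\|_{L_\infty(G)}\le\|f\|_\infty$; injectivity from uniqueness of Bohr coefficients is also fine. (For the record, the paper does not prove this theorem at all; it is quoted from \cite[Theorem~2.16]{defant2020riesz}, whose proof rests on the theory of Riesz means.)

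The backward half, however, has a genuine gap, and you have located it yourself without closing it. Your extension is $f(\sigma+it):=(g\ast\mu_\sigma)(\beta(t))$, where $\mu_\sigma=\beta_\ast(p_\sigma\,dt)$ is the pushforward of the half-plane Poisson kernel. This measure is singular with respect to Haar measure, and $\beta(\mathbb{R})$ is Haar-null in $G$, so $g\ast\mu_\sigma$ is a priori only an element of $L_\infty(G)$: the values you need, namely those on $\beta(\mathbb{R})$, are simply not defined unless $g\ast\mu_\sigma$ is shown to have a continuous representative. Convolution with a singular measure does not regularize $L_\infty$ functions in general, so the phrase ``one then checks \dots that each $P_\sigma$ is a contraction carrying $g$ to a continuous function on $G$'' is not a routine verification -- it is essentially the content of the theorem. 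Indeed, if Bohr's theorem held for $\lambda$ one could obtain continuity of $P_\sigma g$ from uniform convergence of the Dirichlet series on $[\re>\sigma]$, but the statement is claimed for \emph{every} frequency, and frequencies failing Bohr's theorem are exactly the hard case (this is the point of Theorem~\ref{equivalence}). The cited proof circumvents ordinary convergence by Hardy--Riesz summation: one proves a maximal inequality bounding the first-order Riesz means $R^\lambda_x$ of the Dirichlet series attached to $g$ by $\|g\|_{H_\infty^\lambda(G)}$, then uses an estimate of the type \eqref{pontes} to get uniform convergence of $(R_x^\lambda)_x$ on every half plane $[\re>\varepsilon]$; the limit is automatically holomorphic, bounded by $\|g\|_\infty$, and uniformly almost periodic, being a uniform limit of $\lambda$-Dirichlet polynomials. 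Some substitute for this machinery is indispensable; as written, your proposal proves the easy inclusion and reformulates, rather than proves, the hard one.
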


\subsection{Montel theorems} \label{sec.montel}

Bayart showed in \cite[Lemma~18]{bayart2002hardy} that if $\big(\sum a_{n}^{N} n^{-s} \big)_{N}$ is a bounded sequence in $\mathcal{D}_{\infty} ((\log n))$, then there is a subsequence $(N_{k})_{k}$ and a
Dirichlet series $\sum a_{n} n^{-s} \in \mathcal{D}_{\infty} ((\log n))$ so that $\big(\sum a_{n}^{N_{k}} n^{-s} \big)_{k}$ converges to $\sum a_{n} n^{-s}$ uniformly on $[\re > \sigma]$ for every $\sigma >0$. This result, often 
known as Bayart's Montel theorem for Dirichlet series, has become one of the cornerstones of the modern, functional-analytic approach  to ordinary Dirichlet series (see \cite[Theorem~3.11]{defant2018Dirichlet} or \cite[Theorem~6.3.1]{queffelec2013diophantine}).\\
Extending this to general Dirichlet series and to spaces other than $\mathcal{D}_{\infty}(\lambda)$ has been a major concern. It is known that such a result holds for $\mathcal{D}_{\infty}(\lambda)$ if and only if Bohr's theorem holds
for $\lambda$ (see Theorem~\ref{equivalence} below). As for Hardy spaces, \cite[Theorem~5.8]{defant2020variants} shows that an analogous result holds for $\mathcal{H}_{p}(\lambda)$ (with $1 \leq p \leq \infty$) if the frequency
satisfies Bohr's theorem. We show now that this assumption is actually not needed, and that such a Montel-type theorem in fact holds for every frequency. This is one of our main tools, and it seems of independent interest
for the structure theory of Hardy spaces of general Dirichlet series .

\begin{theorem} \label{teo montel3}
Let $\lambda=(\lambda_{n})$ be a frequency and $1\leq p \leq \infty$. For every  bounded sequence
$\big(\sum a_{n}^{(N)} e^{-\lambda_{n} s} \big)_N$ in $\mathcal{H}_{p}(\lambda)$, there exists a subsequence $(N_{k})_{k}$ and a $\lambda$-Dirichlet series
$\sum a_{n} e^{-\lambda_{n} s} \in \mathcal{H}_{p}(\lambda)$
so that
\[
\lim_{k\to \infty} ~\sum a_{n}^{(N_{k})} e^{-\lambda_{n} \sigma} e^{-\lambda_{n} s} =\sum a_{n}e^{-\lambda_{n} \sigma} e^{-\lambda_{n} s}
\]
in $\mathcal{H}_{p}(\lambda)$ for every $\sigma >0$.
\end{theorem}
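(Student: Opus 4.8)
The plan is to deduce everything from the mapping properties of the horizontal translation operators. For $\sigma>0$ let $T_\sigma$ denote the operator sending $\sum a_n e^{-\lambda_n s}$ to $\sum a_n e^{-\lambda_n\sigma}e^{-\lambda_n s}$. Fixing a $\lambda$-Dirichlet group $(G,\beta)$ and using the isometric identification $\mathcal H_p(\lambda)=H_p^\lambda(G)$, the operator $T_\sigma$ is the convolution operator multiplying the $n$-th Bohr coefficient by $e^{-\lambda_n\sigma}$; it is induced by a probability measure on $G$ (the push-forward under $\beta$ of the Poisson kernel of $[\re>0]$), so $T_\sigma$ is a contraction on every $\mathcal H_p(\lambda)$. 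The whole theorem then reduces to one assertion: for each $\sigma>0$, $T_\sigma$ sends bounded, coefficientwise convergent sequences in $\mathcal H_p(\lambda)$ to $\mathcal H_p(\lambda)$-norm convergent ones.

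The first step is to strip off the parameter $p$ by a smoothing argument. I would use that for every $\sigma>0$ the translation $T_\sigma\colon\mathcal H_p(\lambda)\to\mathcal H_\infty(\lambda)$ is bounded -- this is the hypercontractivity of translation operators alluded to in the introduction (cf.\ \cite{defant2020variants}). Factoring $T_\sigma=T_{\sigma/2}\circ T_{\sigma/2}$, the first half-translate turns a bounded sequence in $\mathcal H_p(\lambda)$ into a bounded sequence in $\mathcal H_\infty(\lambda)$, with coefficientwise convergence preserved; and since $\|\punkt\|_{\mathcal H_p(\lambda)}\le\|\punkt\|_{\mathcal H_\infty(\lambda)}$ on the probability space $G$, any $\mathcal H_\infty$-norm convergence produced by the second half-translate transfers back to $\mathcal H_p(\lambda)$. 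Hence it suffices to treat the case $p=\infty$.

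For $p=\infty$ I would argue through almost periodic holomorphic functions. By Theorem~\ref{equivalence2} one has $\mathcal H_\infty(\lambda)=\HRe$, so a bounded, coefficientwise convergent sequence becomes a uniformly bounded sequence $(F_k)$ of bounded, uniformly almost periodic holomorphic functions on $[\re>0]$ whose Bohr coefficients converge, and a normal-families (Montel) argument produces a locally uniform limit $F$. The genuine difficulty -- and the step where the absence of any hypothesis on $\lambda$ must be exploited -- is to upgrade this to uniform convergence of $T_\tau F_k=F_k(\punkt+\tau)$ on all of $[\re>0]$, i.e.\ to $\mathcal H_\infty$-norm convergence. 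I would obtain this from a uniform tail estimate furnished by Riesz summation \cite{defant2020riesz}: for $g\in\mathcal H_\infty(\lambda)$ the Riesz means obey a Bohr/Cole--Gamelin-type bound $\|R_x(g)\|_{\mathcal H_\infty(\lambda)}\le c\,\|g\|_{\mathcal H_\infty(\lambda)}$ valid for \emph{every} frequency, and the convergence factor $e^{-\lambda_n\tau}$ supplied by the translation forces $\sup_{\|g\|\le1}\|T_\tau g-R_x(T_\tau g)\|_{\mathcal H_\infty(\lambda)}\to0$ as $x\to\infty$. Splitting $T_\tau(F_k-F_l)$ into a Riesz mean (a fixed finite $\lambda$-Dirichlet polynomial, hence governed by the convergence of finitely many coefficients) plus a uniformly small tail then exhibits $(T_\tau F_k)_k$ as $\mathcal H_\infty$-Cauchy.

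Finally I would assemble the statement. A diagonal argument over $n$, using $|a_n^{(N)}|\le\|D^{(N)}\|_{\mathcal H_1(\lambda)}\le\|D^{(N)}\|_{\mathcal H_p(\lambda)}$, yields one subsequence $(N_k)$ along which every Dirichlet coefficient converges, say to $a_n$; by the reduction this single subsequence makes $T_\sigma D^{(N_k)}$ converge in $\mathcal H_p(\lambda)$ for \emph{every} $\sigma>0$, and matching coefficients identifies the limit as $\sum a_n e^{-\lambda_n\sigma}e^{-\lambda_n s}$. Since $T_\sigma$ is a contraction these translates are uniformly bounded, with $\sup_{\sigma>0}\|\sum a_n e^{-\lambda_n\sigma}e^{-\lambda_n s}\|_{\mathcal H_p(\lambda)}\le\sup_k\|D^{(N_k)}\|_{\mathcal H_p(\lambda)}$, and the Hardy space theory \cite{defant2019hardy,defantschoolmann2019Hptheory} then gives $\sum a_n e^{-\lambda_n s}\in\mathcal H_p(\lambda)$; for $1<p\le\infty$ this membership also follows directly from (weak, resp.\ weak-$\ast$) sequential compactness of $(D^{(N_k)})$, whereas the endpoint $p=1$ is the delicate case, where a brothers-Riesz-type absolute continuity of the limiting analytic measure is the substitute. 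I expect the Riesz-means tail estimate at the $\infty$-level to be the main obstacle, the hypercontractive factorization and the coefficientwise diagonalisation being comparatively routine.
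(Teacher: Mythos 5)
Your plan for the case $p=\infty$ is sound, and it is essentially the paper's own proof of Theorem~\ref{teo montel}: diagonal extraction of Bohr coefficients, the identification $\mathcal{H}_\infty(\lambda)=\HRe$ of Theorem~\ref{equivalence2}, and a splitting of $T_\tau(F_k-F_l)$ into a finite Riesz mean (controlled by finitely many coefficients) plus a tail that is small uniformly over the unit ball; in the paper that uniform tail estimate is precisely $\sup_{t}|R_x^\lambda(g)(\sigma+it)-g(\sigma+it)|\le \|g\|_\infty\,\|P_{\sigma/2}-P_{\sigma/2}\ast K_x\|_{L_1(\mathbb{R})}$, coming from the convolution formula of Lemma~\ref{lemma0.1} and the summability-kernel property of the Fej\'er kernels.

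The reduction of $1\le p<\infty$ to $p=\infty$, however, rests on a false statement, so the proof collapses in exactly the range where the theorem needs a new idea. You claim that $T_\sigma:\mathcal{H}_p(\lambda)\to\mathcal{H}_\infty(\lambda)$ is bounded for every frequency and every $\sigma>0$. This fails already for $\lambda=(\log n)$: the series $D=\sum_{n\ge 2} n^{-1/2}(\log n)^{-1}\,n^{-s}$ belongs to $\mathcal{H}_2((\log n))$, has non-negative coefficients, hence $\sigma_b(D)=\sigma_a(D)=1/2$, so for $\sigma<1/2$ the translate $T_\sigma D$ is unbounded on $[\re>0]$ and thus not in $\mathcal{H}_\infty((\log n))=\mathcal{D}_\infty((\log n))$. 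Worse, for general frequencies even boundedness into $\mathcal{H}_q(\lambda)$ with $q<\infty$ can fail: Remark~\ref{zum} of the paper records Bayart's example of a frequency satisfying \eqref{BC} for which $\tau_\sigma:\mathcal{H}_1(\lambda)\to\mathcal{H}_2(\lambda)$ is unbounded for \emph{every} $\sigma>0$; since $\|\cdot\|_{\mathcal{H}_2(\lambda)}\le\|\cdot\|_{\mathcal{H}_\infty(\lambda)}$ on the probability space $G$, your claimed boundedness into $\mathcal{H}_\infty(\lambda)$ would in particular give boundedness into $\mathcal{H}_2(\lambda)$, a contradiction. Hypercontractivity of translations, as treated in this paper, is (i) only ever into finite $q$, and (ii) a property that a frequency may or may not enjoy (Theorem~\ref{jubiliert} gives sufficient conditions), so it cannot be an ingredient of a theorem asserted for arbitrary $\lambda$. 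The paper's actual device for $1\le p<\infty$ is different and avoids any smoothing between exponents: it proves a vector-valued Montel theorem (Theorem~\ref{teo montel2}) for $\HRevv{X}$ and applies it through the isometric, frequency-independent embedding $\Psi:\mathcal{H}_p(\lambda)\hookrightarrow\HRevv{\mathcal{H}_p(\lambda)}$ of \cite[Lemma~4.9]{defantschoolmann2019Hptheory}, under which $a_{\lambda_n}(\Psi D)=a_n e^{-\lambda_n z}$; coefficientwise convergence feeds the vector-valued theorem, the isometry transfers norm convergence of the translates back to $\mathcal{H}_p(\lambda)$, and membership of the limit series in $\mathcal{H}_p(\lambda)$ follows from \cite[Theorem~4.7]{defantschoolmann2019Hptheory} (your closing coefficient estimates and identification of the limit are fine, but only once this substitute for the hypercontractive factorization is in place).
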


As a matter of fact, we prove a more general result for  uniformly almost periodic functions (Theorem~\ref{teo montel}), from which this follows. We need some preliminary work. We begin by fixing some notation and collecting basic properties of the main tools that we are going to use. All of them are rather standard, and can be found in several monographs, like e.g. \cite{pereyraward,rudin_groups}. First of all, the Fourier transform
of a function $f \in L_{1} (\mathbb{R})$ is denoted either by $\mathcal{F}(f)$ or $\widehat{f}$ and is defined as
\[
\mathcal{F}(f) (t) = \widehat{f} (t) = \int_{-\infty}^{+\infty} f(t) e^{- i t x} dx  \,,
\]
for $t \in \mathbb{R}$.  The F\'ejer kernel is defined,  for $x >0$, as
\[
K_{x} (t) = \frac{1}{2\pi x} \Big( \frac{\sin ( x t/2)}{ t/2} \Big)^{2}
\]
for $t \in \mathbb{R}$. These belong to $L_{1}(\mathbb{R})$ and $\Vert K_{x} \Vert_{1} = 1$ for every $x$. The family $\{K_{x} \}_{x >0}$ is a summability kernel (i.e. $K_{x} * f \to f$ in $L_{1}(\mathbb{R})$ as $x \to \infty$ for every $f \in L_{1}(\mathbb{R})$). Also it is not difficult to check that
\begin{equation} \label{telemann}
\widehat{K}_{x} (t) = \Big( 1- \frac{\vert t \vert}{x} \Big) \chi_{[-x,x]} (t) \,,
\end{equation}
where $\chi_{A}$ denotes the indicator function of the set $A$.
The Poisson kernel is defined for $\sigma >0$ as
\[
P_{\sigma} (t) = \frac{1}{\pi}\frac{\sigma}{t^{2} + \sigma^{2}}
\]
for  $t \in \mathbb{R}$. Again, this belongs to $L_{1}(\mathbb{R})$ with $\Vert P_{\sigma} \Vert_{1}=1$, and
for every $\sigma,t$
\begin{equation} \label{morricone}
\widehat{P}_{\sigma} (t) = e^{- \vert t \vert \sigma} \,.
\end{equation}
Finally, given a frequency $\lambda$ and $D=\sum a_{n}e^{-\lambda_{n}s}\in \mathfrak{D}(\lambda)$, for each $x > 0$  the corresponding Riesz mean of $D$ of order $1$ (see \cite{defant2020riesz}) is given by
\[
R_{x}^{\lambda}(D)= \sum_{\lambda_{n}<x}a_n\big(1-\frac{\lambda_{n}}{x}\big) e^{-\lambda_n s} \,.
\]
Since every $f \in \HRe$ formally defines the Dirichlet series $D=\sum a_{\lambda_{n}}(f)e^{-\lambda_{n}s}$, the Riesz mean of $f$ of length $x$ and order $1$ is given by the entire function
\[
R_{x}^{\lambda}(f)(s) = \sum_{\lambda_{n}<x}a_{\lambda_{n}}(f)\big(1-\frac{\lambda_{n}}{x}\big) e^{-\lambda_n s} \,.
\]
Let us note that, for a given $\lambda$-Dirichlet series $D=\sum a_{n}e^{-\lambda_{n}s}$, the result \cite[Lemma~3.8]{schoolmann2018bohr} shows
\begin{equation}\label{pontes}
\inf \Big\{ \sigma \in \mathbb{R} \colon (R_{x}^{\lambda}(D))_{x}  \text{ converges uniformly on } [\re > \sigma] \Big\}\le \limsup_{x \to \infty} \frac{\log \big( \sup_{\re s > 0}\big\vert R_{x}(D)(s) \big\vert \big)}{x}\,.
\end{equation}
 With this we have at hand everything we need to proceed. We begin by isolating some observations.

\begin{lemma} \label{lemma0.1}
Let $\lambda=(\lambda_{n})$ be an arbitrary frequency, and $f  \in \HRe$.

\begin{enumerate}[(i)]
\item \label{lemma0.1-1} For all $\sigma,\varepsilon, x>0$ and $t\in \mathbb{R}$ we have
\begin{equation} \label{vector}
R_{x}^{\lambda}(f)(\sigma+\varepsilon+it)=(f_{\varepsilon}*P_{\sigma}*K_{x})(t) \,.
\end{equation}

\item \label{lemma0.1-2}  $\|R_{x}^{\lambda}(f)\|_{\infty}\le \|f\|_{\infty}$ for every $x>0$. In particular, $ (R^\lambda_{x}(f))_{x}$ converges uniformly to $f$ on all half-planes $[\re > \sigma]$, $\sigma>0$.

\item \label{lemma0.1-3} $\big(\sup_{\re s = \sigma}|f(s)|\big)_{\sigma >0}$ is decreasing in $\sigma >0$, and
$\|f\|_{\infty} = \lim_{\sigma \to 0} \sup_{\re s = \sigma}|f(s)|$\,.

\item \label{lemma0.1-4} For all $\sigma >0$ we have
\[
\sup_{\re s \geq \sigma} \vert f(s) \vert = \sup_{\re s = \sigma} \vert f(s) \vert \,.
\]
\end{enumerate}
\end{lemma}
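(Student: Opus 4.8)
The plan is to reduce all four assertions to a single computational principle together with a Poisson reproduction formula. The principle is: if $g\colon\mathbb{R}\to\mathbb{C}$ is bounded and uniformly almost periodic and $h\in L_{1}(\mathbb{R})$, then $g*h$ is again bounded and uniformly almost periodic, and its Bohr coefficient at any frequency is the corresponding Bohr coefficient of $g$ multiplied by the value of $\widehat{h}$ there. Almost periodicity of $g*h$ is immediate from $|(g*h)(t+\tau)-(g*h)(t)|\le\|h\|_{1}\sup_{v}|g(v+\tau)-g(v)|$, which turns every $\varepsilon$-almost period of $g$ into one of $g*h$; the coefficient formula follows by inserting the definition of the Bohr coefficient and applying Fubini (legitimate since $h\in L_{1}$ and $g$ is bounded). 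I would apply this with $g=f_{\sigma}$, which by the definition of the Bohr coefficients $a_{x}(f)$ is bounded and uniformly almost periodic with Bohr--Fourier series $\sum_{n}a_{\lambda_{n}}(f)e^{-\sigma\lambda_{n}}e^{-i\lambda_{n}t}$. For part (i), take $h=P_{\sigma}*K_{x}$, so that $\widehat{h}=\widehat{P_{\sigma}}\,\widehat{K_{x}}$; evaluating the even functions from \eqref{morricone} and \eqref{telemann} at $\lambda_{n}\ge0$ gives $\widehat{h}(\lambda_{n})=e^{-\sigma\lambda_{n}}(1-\lambda_{n}/x)$ for $\lambda_{n}<x$ and $0$ otherwise. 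Hence $f_{\varepsilon}*P_{\sigma}*K_{x}$ is almost periodic with all Bohr coefficients zero except at the finitely many frequencies $-\lambda_{n}$ with $\lambda_{n}<x$, where they equal $a_{\lambda_{n}}(f)(1-\lambda_{n}/x)e^{-(\sigma+\varepsilon)\lambda_{n}}$. By the uniqueness theorem for almost periodic functions it therefore coincides with the trigonometric polynomial $\sum_{\lambda_{n}<x}a_{\lambda_{n}}(f)(1-\lambda_{n}/x)e^{-\lambda_{n}(\sigma+\varepsilon)}e^{-i\lambda_{n}t}$, which is exactly $R_{x}^{\lambda}(f)(\sigma+\varepsilon+it)$. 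The same principle with $h=P_{\tau}$ (and uniqueness) yields the reproduction formula $f_{\sigma}*P_{\tau}=f_{\sigma+\tau}$ for all $\sigma,\tau>0$, which I use repeatedly.

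I would treat (iii) and (iv) next, and read off the norm bound of (ii) from part (i). Writing $M(\sigma)=\sup_{\re s=\sigma}|f(s)|=\|f_{\sigma}\|_{\infty}$, the reproduction formula and $\|P_{\tau}\|_{1}=1$ give $M(\sigma+\tau)=\|f_{\sigma}*P_{\tau}\|_{\infty}\le\|f_{\sigma}\|_{\infty}=M(\sigma)$, so $M$ is decreasing; since $\|f\|_{\infty}=\sup_{\sigma>0}M(\sigma)$ and $M$ is decreasing, this gives both the monotonicity and the identity $\|f\|_{\infty}=\lim_{\sigma\to0}M(\sigma)$ of (iii). Monotonicity of $M$ also gives at once $\sup_{\re s\ge\sigma}|f(s)|=\sup_{\rho\ge\sigma}M(\rho)=M(\sigma)$, which is (iv). For the estimate in (ii), part (i) and Young's inequality yield $|R_{x}^{\lambda}(f)(\sigma+\varepsilon+it)|\le\|f_{\varepsilon}\|_{\infty}\|P_{\sigma}\|_{1}\|K_{x}\|_{1}\le\|f\|_{\infty}$, and letting $\sigma,\varepsilon>0$ and $t$ vary exhausts $[\re>0]$, so $\|R_{x}^{\lambda}(f)\|_{\infty}\le\|f\|_{\infty}$.

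For the convergence in (ii), observe that $g_{x}:=R_{x}^{\lambda}(f)-f$ again lies in $\HRe$. Fixing $\sigma>0$ and splitting $\sigma=\sigma'+\varepsilon$ with $\sigma',\varepsilon>0$, part (i) together with $f_{\varepsilon}*P_{\sigma'}=f_{\sigma}$ gives $g_{x}(\sigma+it)=(f_{\sigma}*K_{x})(t)-f_{\sigma}(t)$. Since $f_{\sigma}$ is bounded and uniformly continuous (every uniformly almost periodic function is) and $\{K_{x}\}_{x>0}$ is a summability kernel concentrating at the origin, a standard approximate-identity argument gives $\|f_{\sigma}*K_{x}-f_{\sigma}\|_{\infty}\to0$ as $x\to\infty$, i.e. $\sup_{\re s=\sigma}|g_{x}(s)|\to0$. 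Applying (iv) to $g_{x}$ then upgrades this to $\sup_{\re s\ge\sigma}|g_{x}(s)|\to0$, which is the asserted uniform convergence on every half plane $[\re>\sigma]$.

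The main obstacle is carrying out part (i) and the reproduction formula rigorously: one must justify that convolving a merely bounded almost periodic function with an $L_{1}$-kernel again produces an almost periodic function, that convolution acts as multiplication on Bohr coefficients, and that the uniqueness theorem then forces the convolution to equal the expected finite trigonometric polynomial. Once this algebraic backbone is secured, parts (ii)--(iv) reduce to Young's inequality, the monotonicity of $M$, and the uniform convergence of Fej\'er means of a uniformly continuous function, all of which are routine.
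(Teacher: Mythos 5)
Your proof is correct, but it follows a genuinely different route from the paper's in all four parts. For (i), the paper first computes $(Q*P_{\sigma}*K_{x})(t)$ for $\lambda$-polynomials $Q$ and then passes to general $f$ by uniform approximation of $f_{\varepsilon}$ by polynomials; you instead build a small calculus (convolution with an $L_{1}$-kernel preserves bounded uniform almost periodicity and multiplies Bohr coefficients by $\widehat{h}$) and conclude by the uniqueness theorem for Bohr coefficients -- the two mechanisms are different, though your sign convention for $\widehat{h}$ versus the paper's only works out because $P_{\sigma}$ and $K_{x}$ are even, a point you rightly flag. For (iii) and (iv), the paper deduces monotonicity of $\sigma\mapsto\sup_{\re s=\sigma}|f(s)|$ from the corresponding elementary property of the Riesz means (finite polynomials) together with the uniform convergence already established in (ii); you instead isolate the Poisson reproduction formula $f_{\sigma}*P_{\tau}=f_{\sigma+\tau}$ (again via coefficient uniqueness) and get monotonicity in one line from Young's inequality, which is arguably cleaner and lets you prove (iii)/(iv) before (ii). For the convergence statement in (ii), the paper invokes the external Riesz-summability bound \eqref{pontes} from \cite{schoolmann2018bohr} plus coefficient uniqueness, whereas you give a self-contained approximate-identity argument: $R_{x}^{\lambda}(f)(\sigma+i\punkt)=f_{\sigma}*K_{x}$, uniform continuity of almost periodic functions, the concentration of the Fej\'er kernel, and then your part (iv) to upgrade convergence on the line $[\re=\sigma]$ to the half plane $[\re\geq\sigma]$ -- and your ordering (i), (iii), (iv), (ii) avoids any circularity in this last step. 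What the paper's route buys is brevity, by leaning on the Riesz-mean machinery it has already imported; what yours buys is independence from \eqref{pontes}, replacing it with classical facts about almost periodic functions (Bohr's uniqueness theorem and uniform continuity) and elementary kernel estimates. Note also that your reproduction formula is implicitly confirmed by the paper itself, which obtains $g_{\sigma/2}*P_{\sigma/2}=g(\sigma+i\punkt)$ later, in the proof of Theorem~\ref{teo montel}, by letting $x\to\infty$ in \eqref{vector}.
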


\begin{proof}
\ref{lemma0.1-1} Let us take in first place a $\lambda$-polynomial $Q(t) = \sum_{n \in F} c_{n} e^{- \lambda_{n}i t}$, where $F$ is finite. Then, for fixed $\sigma >0$ and $ t \in \mathbb{R}$ we have, using \eqref{telemann} and \eqref{morricone}
\begin{equation} \label{berceuse}
\begin{split}
(Q * P_{\sigma} * K_{x}) (t)
=  \sum_{n \in F} c_{n} & e^{- \lambda_{n} i t} \mathcal{F} (P_{\sigma} * K_{x}) (\lambda_{n})
=   \sum_{n \in F} c_{n} e^{- \lambda_{n} i t} \widehat{P}_{\sigma} ( \lambda_{n}) \widehat{K}_{x} (\lambda_{n}) \\
& =   \sum_{n \in F} c_{n} e^{- \lambda_{n} i t}  e^{- \lambda_{n} \sigma} \Big(1-\frac{\lambda_{n}}{x} \Big) \chi_{[-x,x]} ( \lambda_{n})
=   \sum_{n \in F \atop \lambda_{n} < x} c_{n} e^{- \lambda_{n} (\sigma + i t)}  \Big(1-\frac{\lambda_{n}}{x} \Big) \,.
\end{split}
\end{equation}
Fix now $\varepsilon >0$. Since $f_{\varepsilon}$ is  uniformly almost periodic, there exists a sequence $(Q_{N}^{\varepsilon})_{N}$ of $\lambda$-polynomials that converge to $f_{\varepsilon}$ uniformly on $[\re >0]$. Moreover,
\[
\lim_{N\to \infty} a_{\lambda_{n}} (Q_{N}^{\varepsilon} ) = a_{\lambda_{n}}(f_{\varepsilon})=a_{\lambda_{n}}(f)e^{-\varepsilon \lambda_{n}}
\]
for every $n$.
Hence, applying \eqref{berceuse} (for $Q_{N}^{\varepsilon}$) and letting $N \to \infty$ yields the claim in \eqref{vector}.\\
In order to prove \ref{lemma0.1-2}, let us note that \eqref{vector} immediately implies
\[
\sup_{t\in \mathbb{R}} |R_{x}^{\lambda}(f)(\sigma+\varepsilon+it)|
\leq \|f_{\varepsilon}\|_{\infty} \|P_{\sigma}\|_{1} \|F_{x}\|_{1}\le \|f\|_{\infty} \,.
\]
Tending $\varepsilon, \sigma \to 0$ we obtain the claim. The uniform convergence on half-planes follows immediately from \eqref{pontes}, and the fact that the Bohr coefficients determine uniquely an almost periodic function (see \cite{Be54}).\\
Since the Riesz means are finite sums we have (see \cite[Section~2]{schoolmann2018bohr} or \cite[Lemma~1.7]{defant2018Dirichlet}, from which the argument can be adapted)
\[
\sup_{\re s = \sigma} \big\vert R_{x}^{\lambda} (f) (s) \big\vert
= \sup_{\re s \geq \sigma} \big\vert R_{x}^{\lambda} (f) (s) \big\vert \,.
\]
This gives
\[
\sup_{t \in \mathbb{R}} \big| R^\lambda_{x}(f)(\mu + it)  \big|
\leq
\sup_{t \in \mathbb{R}} \big| R^\lambda_{x}(f)(\sigma + it)  \big|\,,
\]
for $0 <\sigma < \mu$, and \ref{lemma0.1-3} follows from \ref{lemma0.1-2}. Once we have this, \ref{lemma0.1-4} follows immediately from the fact that $f(\sigma + \punkt)$ is uniformly almost periodic, that $\sup_{\re s = \rho} \vert f(s) \vert$ increases as $\rho \to \sigma^{+}$ and that $\sup_{\re s = \sigma} \vert f(s) \vert<~\infty$.
\end{proof}

We now proceed to the announced Montel-type theorem for $\HRe$.

\begin{theorem} \label{teo montel}
Let $\lambda$ be an arbitrary frequency. For every  bounded sequence $( f_{N} )_{N}$
in $\HRe$, there is a subsequence $( f_{N_k} )_{k}$ and $f  \in \HRe$ such that
$f_{N_k}(\sigma + \punkt) \to  f(\sigma +  \punkt)$ in $\HRe$ for every $\sigma >0$.
\end{theorem}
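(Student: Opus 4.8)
The plan is to reduce the statement to a coefficientwise compactness argument and then to upgrade pointwise convergence of the Bohr coefficients to uniform convergence on half-planes by a Riesz-mean approximation that is quantitative and \emph{uniform} over the bounded family. Write $M:=\sup_N\|f_N\|_\infty<\infty$. Since $|a_{\lambda_n}(g)|\le\|g\|_\infty$ for every $g\in\HRe$ (bound the integral in \eqref{Bohrcoeffholo} on a line $\re=\sigma$ and let $\sigma\to 0$), for each fixed $n$ the scalars $(a_{\lambda_n}(f_N))_N$ lie in a disc of radius $M$, so a diagonal argument yields a subsequence $(N_k)_k$ and numbers $(a_n)_n$, $|a_n|\le M$, with $a_{\lambda_n}(f_{N_k})\to a_n$ for every $n$. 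Set $D:=\sum a_n e^{-\lambda_n s}$. For each fixed $x>0$ the Riesz mean $R_x^\lambda(D)$ is a \emph{finite} sum whose coefficients are the limits of those of $R_x^\lambda(f_{N_k})$, and since $\|R_x^\lambda(f_{N_k})\|_\infty\le\|f_{N_k}\|_\infty\le M$ by Lemma~\ref{lemma0.1}\,\ref{lemma0.1-2}, lower semicontinuity of the sup-norm gives $\|R_x^\lambda(D)\|_\infty\le M$ for all $x$. Feeding this into \eqref{pontes} shows $\limsup_{x}\tfrac1x\log\sup_{\re s>0}|R_x^\lambda(D)(s)|\le\limsup_x\tfrac{\log M}{x}=0$, so the means $R_x^\lambda(D)$ converge uniformly on every $[\re>\sigma]$ to a function $f$. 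Each $R_x^\lambda(D)$ is bounded (by $M$), holomorphic and uniformly almost periodic with spectrum in $\{\lambda_n\}$; these properties pass to the uniform limit, and continuity of Bohr coefficients under uniform convergence on vertical lines gives $a_{\lambda_n}(f)=a_n$ and $a_x(f)=0$ otherwise, so $f\in\HRe$ with $\|f\|_\infty\le M$.

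It remains to prove $\|f_{N_k}(\sigma_0+\punkt)-f(\sigma_0+\punkt)\|_\infty\to 0$ for each fixed $\sigma_0>0$. The engine is the following estimate, uniform in $N$. Taking $\sigma=\varepsilon=\sigma_0/2$, Lemma~\ref{lemma0.1}\,\ref{lemma0.1-1} together with the Poisson reproduction $f_{N,\varepsilon}*P_\sigma=f_N(\sigma_0+i\punkt)$ (same Bohr-coefficient computation as in \eqref{berceuse}, letting $x\to\infty$) gives
\[
R_x^\lambda(f_N)(\sigma_0+it)-f_N(\sigma_0+it)=(g_N*K_x-g_N)(t),\qquad g_N:=f_N(\sigma_0+i\punkt).
\]
By the Cauchy estimate on discs contained in $[\re>0]$ one has $\sup_t|f_N'(\sigma_0+it)|\le M/\sigma_0$, so each $g_N$ is bounded by $M$ and Lipschitz with constant $\le M/\sigma_0$, \emph{uniformly in $N$}. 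Bounding $|g_N*K_x-g_N|\le\int K_x(\tau)\min\!\big(2M,\tfrac{M}{\sigma_0}|\tau|\big)\,d\tau$ and splitting the integral at a fixed radius, using $\|K_x\|_1=1$, the tail bound $K_x(\tau)\le\tfrac{2}{\pi x\tau^2}$ and \eqref{telemann}, produces a bound $\eta(x)$ of order $\tfrac{M}{\sigma_0}\tfrac{\log x}{x}$ that tends to $0$ as $x\to\infty$ and is independent of $N$. The only subtlety is that the first moment of the Fej\'er kernel diverges logarithmically, so the truncation yields a $\log x$ rather than a clean $1/x$; this is harmless since we only need $\eta(x)\to 0$.

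Finally, fix $\sigma_0>0$ and split on $[\re\ge\sigma_0]$
\[
|f_{N_k}(s)-f(s)|\le|f_{N_k}(s)-R_x^\lambda(f_{N_k})(s)|+|R_x^\lambda(f_{N_k})(s)-R_x^\lambda(f)(s)|+|R_x^\lambda(f)(s)-f(s)|.
\]
Since $f_{N_k}-R_x^\lambda(f_{N_k})$ and $f-R_x^\lambda(f)$ both lie in $\HRe$, Lemma~\ref{lemma0.1}\,\ref{lemma0.1-4} replaces the supremum over $[\re\ge\sigma_0]$ of the first and third terms by the supremum over the line $\re=\sigma_0$, where the previous paragraph bounds each by $\eta(x)$, uniformly in $k$. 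The middle term is, for fixed $x$, a finite Dirichlet polynomial $\sum_{\lambda_n<x}(a_{\lambda_n}(f_{N_k})-a_n)(1-\lambda_n/x)e^{-\lambda_n s}$, whose supremum over $[\re\ge\sigma_0]$ is at most $\sum_{\lambda_n<x}|a_{\lambda_n}(f_{N_k})-a_n|\to 0$ as $k\to\infty$. Thus, given $\delta>0$, choose $x$ with $2\eta(x)<\delta/2$ and then $k$ large so that the middle term is $<\delta/2$; this gives $\sup_{\re s\ge\sigma_0}|f_{N_k}(s)-f(s)|<\delta$, i.e. convergence of $f_{N_k}(\sigma_0+\punkt)$ to $f(\sigma_0+\punkt)$ in $\HRe$, and $\sigma_0>0$ was arbitrary. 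The main obstacle is exactly the uniform-in-$N$ Riesz-mean estimate of the middle paragraph: the plain almost-periodic approximation only converges for each individual $f_N$, and it is the Cauchy-estimate Lipschitz bound — available because the whole family is bounded by $M$ while we work strictly inside $[\re>0]$ — that makes the Fej\'er approximation rate uniform over the family.
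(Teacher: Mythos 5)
Your proposal is correct, and its global architecture coincides with the paper's: a diagonal argument extracts a subsequence with convergent Bohr coefficients; the Riesz means built from the limit coefficients are uniformly bounded, so \eqref{pontes} produces the limit function $f\in\HRe$; and the convergence $f_{N_k}(\sigma_0+\punkt)\to f(\sigma_0+\punkt)$ follows from the same three-term split through $R_x^\lambda(f_{N_k})$ and $R_x^\lambda(f)$, with the middle term a finite Dirichlet polynomial controlled by coefficientwise convergence. Where you genuinely diverge is in the crucial uniform-in-$N$ estimate $\sup_t|R_x^\lambda(g)(\sigma_0+it)-g(\sigma_0+it)|\le\eta(x)\to 0$, valid for all $g$ with $\|g\|_\infty\le M$. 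The paper gets this softly: by Lemma~\ref{lemma0.1}\,\ref{lemma0.1-1} and the reproduction identity $g_{\sigma_0/2}*P_{\sigma_0/2}=g(\sigma_0+i\punkt)$, the difference equals $g_{\sigma_0/2}*\bigl(P_{\sigma_0/2}*K_x-P_{\sigma_0/2}\bigr)$, hence is bounded by $\|g\|_\infty\,\|P_{\sigma_0/2}*K_x-P_{\sigma_0/2}\|_{L_1(\mathbb{R})}$, and the second factor tends to $0$ because $(K_x)_x$ is a summability kernel; all dependence on $g$ is swallowed by $\|g\|_\infty$, with no rate needed. You instead write the difference as $g_{\sigma_0}*K_x-g_{\sigma_0}$, make the family uniformly Lipschitz via Cauchy estimates (constant $M/\sigma_0$, legitimate since the closed discs of radius $<\sigma_0$ around points of $[\re=\sigma_0]$ lie in $[\re>0]$), and run explicit Fej\'er-kernel computations with the tail bound $K_x(\tau)\le\tfrac{2}{\pi x\tau^2}$; your handling of the logarithmically divergent first moment via the truncated bound $\min\bigl(2M,\tfrac{M}{\sigma_0}|\tau|\bigr)$ is correct and yields $\eta(x)=O\bigl(\tfrac{M}{\sigma_0}\tfrac{\log x}{x}\bigr)$. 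The trade-off: the paper's one-line $L_1$ argument uses nothing beyond boundedness and the convolution identity, which is why it transfers verbatim to the vector-valued Theorem~\ref{teo montel2}; your variant invokes holomorphy (still available, also in the Banach-valued case) and is more computational, but it buys an explicit, family-uniform convergence rate for the Riesz means, which the paper's qualitative argument does not provide.
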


\begin{proof}
Since $|a_{\lambda_{n}}(f_{N})|\leq \|f_{N}\|_{\infty} \leq \sup_{N} \|f_{N}\|_{\infty}<\infty$ for all $n$ and $N$, a standard diagonal process provides us with a subsequence $(N_{k})_{k}$ so that
$\big( a_{\lambda_{n}}(f_{N_k}) \big)_{k}$ converges (in $\mathbb{C}$) for every $n$ as $k\to \infty$. Define
\[
a_{\lambda_{n}}:=\lim_{k\to \infty} a_{\lambda_{n}}(f_{N_k})
\]
for each $n$. Using Lemma~\ref{lemma0.1}--\ref{lemma0.1-2} we conclude that
\[
\Big| \sum_{\lambda_{n}<x}a_{\lambda_{n}} e^{-\lambda_n \sigma} \big( 1-\frac{\lambda_{n}}{x} \big) e^{-i\lambda_n t}\Big|
= \lim_{k\to \infty} |R_{x}^{\lambda}(f_{N_{k}})(\sigma+it)|
\leq \sup_N\|f_{N}\|_{\infty}<\infty
\,,
\]
for every $x >0$ and $\sigma >0$. Then \eqref{pontes} gives
\[
s \mapsto \sum_{\lambda_{n}<x}a_{\lambda_{n}}  \big( 1-\frac{\lambda_{n}}{x} \big) e^{-\lambda_n s}
\]
converges as $x\to \infty$ uniformly on every half plane $[\re > \sigma]$ with $\sigma >0$, and  let us denote the limit function by $f$. The uniform convergence on half planes easily gives that $f \in H_{\infty}^{\lambda}[\re >0]$ with Bohr coefficients $a_{\lambda_{n}}(f)=a_{\lambda_{n}}$, and it only remains to see that $\lim_{k\to \infty}f_{N_k}(\sigma +  \punkt)=f(\sigma+ \punkt)$ in $H_{\infty}^\lambda [\re  > 0]$ for every $\sigma >0$. Therefore fix $\sigma >0$ and observe that, by Lemma~\ref{lemma0.1}--\ref{lemma0.1-4}  it suffices to check that
\[
\lim_{k\to \infty}\sup_{t \in \mathbb{R}}|f_{N_k}(\sigma + i t)-  f(\sigma+it)| =0 \,.
\]
Let us note first that by  Lemma~\ref{lemma0.1}--\ref{lemma0.1-2} letting $x \to \infty$ in \eqref{vector} gives $g_{\sigma/2} * P_{\sigma/2}= g(\sigma + i \punkt)$  for every $g \in H_{\infty}^{\lambda}[\re >0]$. This, together with Lemma~\ref{lemma0.1}--\ref{lemma0.1-1} yields,
\begin{align*}
\sup_{t  \in \mathbb{R}}|R_{x}^{\lambda}(g)(\sigma + it) -g(\sigma + it)|
=
\sup_{t  \in \mathbb{R}} |g_{\sigma/2}*P_{\sigma/2}*F_{x}(t)-g_{\sigma/2}\ast & P_{\sigma/2}(t)| \\
& \leq \|g\|_{\infty} \|P_{\sigma/2}-P_{\sigma/2}*F_{x}\|_{L_{1}(\mathbb{R})} \,.
\end{align*}
for every such $g$ and $x >0$.
Being $(K_{x})_{x}$ a summability kernel, the latter term tends to $0$ as $x \to \infty$. Hence, given $\varepsilon >0$ we can find $x_{0}$ so that for all $x>x_{0}$
\begin{gather*}
\sup_{t \in \mathbb{R}} |f(\sigma+it) - R_{x}^{\lambda}(f) (\sigma+it)| \leq \frac{\varepsilon}{3}
 \\
\sup_{N\in \mathbb{N}} \sup_{t \in \mathbb{R}}| f_{N}(\sigma+it) - R_{x}^{\lambda}(f_{N})(\sigma+it)| \leq  \frac{\varepsilon}{3} \,.
\end{gather*}
On the other hand, since $\lim_{k\to \infty} a_{\lambda_{n}} (f_{N_{k}})= a_{\lambda_{n}}$ for every $n$, fixing $x =2x_{0}$ we may find $k_{0}$ so that
\[
\sum_{\lambda_{n}<x} |a_{\lambda_{n}}-a_{\lambda_{n}}(f_{N_k})| \leq \frac{\varepsilon}{3} \,,
\]
for every $k \geq k_{0}$. Joining all this together, given $k \geq k_{0}$ and $t \in \mathbb{R}$ we have
\begin{multline*}
\vert f(\sigma + it) - f_{N_{k}} (\sigma + it) \vert \\
\leq \vert f(\sigma + it) - R_{x}^{\lambda}(f) (\sigma+it) \vert + \vert R_{x}^{\lambda}(f) (\sigma+it) - R_{x}^{\lambda}(f_{N_{k}}) (\sigma+it)
\vert + \vert f(\sigma + it) -R_{x}^{\lambda}(f_{N_{k}}) (\sigma+it) \vert  \,,
\end{multline*}
and the previous three estimates complete the proof.
\end{proof}

We go now for a moment to vector-valued functions, considering for a given Banach space $X$, the space $\HRevv{X}$ defined in the obvious way. Also in this case $X$-valued almost periodic functions on $\mathbb{R}$, like in the scalar case, are
uniformly  approximable by $X$-valued almost periodic polynomials (see \cite[page~15]{amerioprouse}). Then the proof of Theorem~\ref{teo montel} can be followed step by step, replacing modulus by norms to get the following vector valued version.

\begin{theorem} \label{teo montel2}
Let $\lambda$ be an arbitrary frequency and $X$ a Banach space. Assume that  $( f_N )_{N}$
is bounded in $\HRevv{X}$, and that
there is a subsequence $(N_k)$ for which $\big( a_{\lambda_n}(f_{N_k}) \big)_{k}$ is convergent  for all $n$. Then there is $f  \in \HRevv{X}$ such that
$\lim_{k\to \infty}f_{N_k}(\sigma +  \punkt)= f(\sigma +  \punkt)$ in $\HRevv{X}$
for every $\sigma >0$.
\end{theorem}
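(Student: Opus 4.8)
The plan is to run the proof of Theorem~\ref{teo montel} line by line, replacing each modulus by the norm of $X$ and each scalar integral by a Bochner integral. One structural difference appears at the very start: the diagonal extraction used in the scalar case is unavailable, since bounded subsets of $X$ need not be relatively compact and one cannot manufacture a subsequence along which all Bohr coefficients converge. This is exactly why convergence of $\big(a_{\lambda_n}(f_{N_k})\big)_k$ is assumed; granting it, I set $a_{\lambda_n}:=\lim_{k\to\infty}a_{\lambda_n}(f_{N_k})\in X$ for each $n$, which exists by completeness of $X$.

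First I would record the $X$-valued analogue of Lemma~\ref{lemma0.1}. Because $X$-valued uniformly almost periodic functions are again uniform limits of $X$-valued $\lambda$-polynomials, the computation \eqref{berceuse} goes through verbatim with coefficients $c_n\in X$ (the kernels are scalar and simply factor out of the Bochner integral), which gives the convolution representation \eqref{vector} for every $f\in\HRevv{X}$. From it, Young's inequality $\|h*k\|_\infty\le\|h\|_\infty\|k\|_1$ (with $h$ vector-valued and $k$ scalar) yields $\|R_x^\lambda(f)\|_\infty\le\|f\|_\infty$ as in part~\ref{lemma0.1-2}, and parts~\ref{lemma0.1-3} and~\ref{lemma0.1-4} follow with norms replacing moduli. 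Since each Riesz mean is a finite sum and $a_{\lambda_n}(f_{N_k})\to a_{\lambda_n}$ in $X$, continuity of the norm gives
\[
\Big\|\sum_{\lambda_n<x}a_{\lambda_n}e^{-\lambda_n\sigma}\big(1-\tfrac{\lambda_n}{x}\big)e^{-i\lambda_n t}\Big\|_X=\lim_{k\to\infty}\big\|R_x^\lambda(f_{N_k})(\sigma+it)\big\|_X\le\sup_N\|f_N\|_\infty
\]
for all $x,\sigma>0$ and $t\in\mathbb{R}$. Feeding this bound into the $X$-valued form of \eqref{pontes} shows that $s\mapsto\sum_{\lambda_n<x}a_{\lambda_n}\big(1-\lambda_n/x\big)e^{-\lambda_n s}$ converges uniformly on each $[\re>\sigma]$ as $x\to\infty$; the limit $f$ is then bounded, holomorphic and uniformly almost periodic with Bohr coefficients $a_{\lambda_n}$, so $f\in\HRevv{X}$.

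For the convergence $f_{N_k}(\sigma+\punkt)\to f(\sigma+\punkt)$ in $\HRevv{X}$, by part~\ref{lemma0.1-4} it suffices to control $\sup_{t}\|f_{N_k}(\sigma+it)-f(\sigma+it)\|_X$. Letting $x\to\infty$ in \eqref{vector} gives $g(\sigma+i\punkt)=g_{\sigma/2}*P_{\sigma/2}$ for $g\in\HRevv{X}$, whence
\[
\sup_t\big\|R_x^\lambda(g)(\sigma+it)-g(\sigma+it)\big\|_X\le\|g\|_\infty\,\|P_{\sigma/2}-P_{\sigma/2}*K_x\|_{L_1(\mathbb{R})},
\]
and the right-hand side tends to $0$ as $x\to\infty$, uniformly over $g$ of bounded norm, because $(K_x)_x$ is a summability kernel. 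The three-$\varepsilon$ argument of Theorem~\ref{teo montel} then runs unchanged: pick $x$ making the Riesz-mean approximation of $f$ and of all $f_N$ smaller than $\varepsilon/3$, then $k_0$ with $\sum_{\lambda_n<x}\|a_{\lambda_n}-a_{\lambda_n}(f_{N_k})\|_X<\varepsilon/3$ for $k\ge k_0$, the latter controlling the middle term $\|R_x^\lambda(f)(\sigma+it)-R_x^\lambda(f_{N_k})(\sigma+it)\|_X$.

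The only steps that are not a mechanical replacement of $|\cdot|$ by $\|\cdot\|_X$ are the two scalar auxiliaries invoked above: the Riesz-mean abscissa estimate \eqref{pontes} (that is, \cite[Lemma~3.8]{schoolmann2018bohr}) and the boundary-maximum identity underlying part~\ref{lemma0.1-4}. I expect verifying these to be the main point. The boundary-maximum property can be obtained for $X$-valued $\lambda$-polynomials directly from the scalar case by Hahn--Banach, writing $\|P(s)\|_X=\sup_{\|x^\ast\|\le1}|x^\ast(P(s))|$ and interchanging suprema. The estimate \eqref{pontes} is more delicate, since genuine norm-uniform convergence of the Riesz means is at stake and a functional-by-functional reduction would only yield weak convergence; here one must instead reread the proof of \cite[Lemma~3.8]{schoolmann2018bohr}, an Abel-summation Cauchy estimate in which the triangle inequality lets $\|\cdot\|_X$ take the place of $|\cdot|$ throughout. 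Both adaptations use only that $X$-valued uniformly almost periodic functions are uniformly approximable by $X$-valued polynomials, so they go through and the proof closes.
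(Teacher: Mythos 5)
Your proposal is correct and follows exactly the route the paper takes: the paper's own proof of Theorem~\ref{teo montel2} is precisely the remark that the argument for Theorem~\ref{teo montel} goes through ``step by step, replacing modulus by norms,'' using the uniform approximability of $X$-valued almost periodic functions by $X$-valued $\lambda$-polynomials, with the coefficient-convergence hypothesis substituting for the diagonal extraction that fails in infinite dimensions. Your write-up is in fact more careful than the paper's, since you explicitly flag and resolve the two genuinely non-scalar points (the vector-valued form of \eqref{pontes} via rereading the Abel-summation proof, and the boundary-maximum identity via Hahn--Banach), both of which the paper leaves implicit.
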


As the final ingredient for the proof of Theorem~\ref{teo montel3} let us recall that by \cite[Lemma~4.9]{defantschoolmann2019Hptheory}, for each $1 \leq p < \infty$  there is an isometric embedding
\[
\Psi \colon \mathcal{H}_{p}(\lambda) \hookrightarrow
\HRevv{\mathcal{H}_{p}(\lambda)}
\]
so that, if $f = \Psi \big( \sum a_{n} e^{-\lambda_{n} s} \big)$, then $a_{\lambda_{n}}(f) = a_{n} e^{-\lambda_{n} z}  \in \mathcal{H}_{p}(\lambda)$.

\begin{proof}[Proof of Theorem~\ref{teo montel3}]
The case $p=\infty$ follows immediately from Theorems~\ref{teo montel} and~\ref{equivalence2}. The case $1 \leq p  < \infty$ is going to follow from Theorem~\ref{teo montel2}, combined with the action of the
embedding $\Psi$. To begin with, let us recall that $\vert a_{n}^{(N)} \vert \leq \sup_{N\in \mathbb{N}} \big\Vert \sum a_{n}^{(N)} e^{-\lambda_{n} s} \big\Vert_{ \mathcal{H}_{p} (\lambda)}=:C$. Then a
diagonal argument shows that we can find a subsequence $(N_{k})_{k}$ so that $\big(a_{n}^{N_{k}} \big)_{k}$ converges for every $n$. Let us define
\[
a_n:=\lim_{k\to \infty} a_n^{(N_k)} \,,
\]
and consider the (formal) $\lambda$-Dirichlet series $\sum a_{n} e^{-\lambda_{n}s}$. Our aim now is to check that this belongs to $\mathcal{H}_{p}(\lambda)$, and that it is the limit of the subsequence
of $\lambda$-Dirichlet series.\\
For each $N$ take the function $f_{N} = \Psi  \big( \sum a_{n}^{(N)} e^{-\lambda_{n} s} \big)$ and note that
\begin{equation*}
\lim_{k} a_{\lambda_{n}}(f_{N_{k}})
= \lim_{k} a_{n}^{(N_{k})} e^{-\lambda_{n} z}
= a_{n} e^{-\lambda_{n} z}
\end{equation*}
exists (in $\mathcal{H}_{p}(\lambda)$) for every $n$. Now we can use Theorem~\ref{teo montel2} to find some
$f \in \HRevv{\mathcal{H}_{p}(\lambda)}$ such that $\lim_{k\to \infty} f_{N_{k}} (\sigma + \punkt)=f(\sigma + \punkt)$ in $\HRevv{\mathcal{H}_{p}(\lambda)}$ for every $\sigma >0$ as $k\to \infty$.\\
Now, since $\Psi$ is isometric and $\Psi(\sum a_{n}^{N_{k}}e^{-\lambda_{n}\sigma}e^{-\lambda_{n}s})=f_{N_{k}}(\sigma+\punkt)$, the sequence $\sum a_{n}^{N_{k}}e^{-\lambda_{n}\sigma}e^{-\lambda_{n}s}$ is Cauchy in $\mathcal{H}_{p}(\lambda)$ and so converges with limit $\sum a_{n}e^{-\lambda_{n}\sigma} e^{-\lambda_{n}s}$. Hence $\sum a_{n}e^{-\lambda_{n}\sigma}e^{-\lambda_{n}s}\in \mathcal{H}_{p}(\lambda)$ for every $\sigma>0$ with $\|\sum a_{n}e^{-\lambda_{n}\sigma}e^{-\lambda_{n}s}\|_{p}\le C$, and so by \cite[Theorem~4.7]{defantschoolmann2019Hptheory} indeed $\sum a_{n}e^{-\lambda_{n}s}\in \mathcal{H}_{p}(\lambda)$. Moreover, the
fact that the embedding $\Psi$ is isometric gives
\[
\Big\Vert \sum a_{n}^{(N_{k})} e^{-\lambda_{n} \sigma} e^{-\lambda_{n} s}- \sum a_{n}e^{-\lambda_{n} \sigma} e^{-\lambda_{n} s} \Big\Vert_{\mathcal{H}_{p}(\lambda)}
 = \| f(\sigma + i \punkt)- f^{N_k}(\sigma + i \punkt)\|_{\infty} \,,
\]
for every $\sigma >0$. This completes the proof.
\end{proof}

Given a somewhere convergent $\lambda$-Dirichlet series $D= \sum a_{n} e^{- \lambda_{n} s}$ and $\sigma >0$ we define the translates series as $D_{\sigma} = \sum a_{n} e^{- \lambda_{n} \sigma} e^{- \lambda_{n} s}$. Note that if the
first series converges at $s$, then so also does the translated series and  $D_{\sigma}(s) = D(s +\sigma)$ (justifying the name). It is easy to see that the translation operator $\tau_{\sigma} \colon \mathcal{H}_{p}(\lambda)\to \mathcal{H}_{p}(\lambda)$ given by
\begin{equation} \label{def:translation}
 \sum a_{n}e^{-\lambda_{n}s} \mapsto \sum a_{n}e^{-\lambda_{n}\sigma}e^{-\lambda_{n}s}
\end{equation}
is well defined and continuous. As a straightforward consequence of Theorem~\ref{teo montel3} we can say more.

\begin{corollary} \label{uhu}
For every $\sigma>0$ and $1\le p \le \infty$ the translation operator $\tau_{\sigma} \colon \mathcal{H}_{p}(\lambda)\to \mathcal{H}_{p}(\lambda)$ is compact.
\end{corollary}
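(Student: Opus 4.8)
The plan is to derive the compactness of $\tau_\sigma$ directly from the Montel-type Theorem~\ref{teo montel3}, of which this corollary is essentially a reformulation. Since $\mathcal{H}_p(\lambda)$ is a Banach space, it suffices to show that $\tau_\sigma$ maps every bounded sequence to a sequence admitting a convergent subsequence; equivalently, that the image of the closed unit ball is relatively (sequentially) compact.

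First I would take an arbitrary bounded sequence $\big(D^{(N)}\big)_N = \big(\sum a_n^{(N)} e^{-\lambda_n s}\big)_N$ in $\mathcal{H}_p(\lambda)$. Applying Theorem~\ref{teo montel3} to this sequence yields a subsequence $(N_k)_k$ and a $\lambda$-Dirichlet series $D = \sum a_n e^{-\lambda_n s} \in \mathcal{H}_p(\lambda)$ such that
\[
\sum a_n^{(N_k)} e^{-\lambda_n \sigma} e^{-\lambda_n s} \longrightarrow \sum a_n e^{-\lambda_n \sigma} e^{-\lambda_n s}
\]
in $\mathcal{H}_p(\lambda)$ as $k \to \infty$; the theorem provides this for every $\sigma > 0$, and in particular for the fixed $\sigma$ at hand.

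The key observation is then simply to read off both sides through the definition~\eqref{def:translation} of the translation operator: the left-hand side equals $\tau_\sigma\big(D^{(N_k)}\big)$, while the right-hand side equals $\tau_\sigma(D)$. Hence $\tau_\sigma\big(D^{(N_k)}\big) \to \tau_\sigma(D)$ in $\mathcal{H}_p(\lambda)$, with limit $\tau_\sigma(D) \in \mathcal{H}_p(\lambda)$. Thus every bounded sequence in $\mathcal{H}_p(\lambda)$ has a subsequence whose image under $\tau_\sigma$ converges, which is exactly the compactness of $\tau_\sigma$.

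There is essentially no hard step here; the entire content has been front-loaded into Theorem~\ref{teo montel3}, whose statement already incorporates the translation factor $e^{-\lambda_n \sigma}$. The only point requiring a moment's care is matching the limit object: one must note that the limiting translated series $\sum a_n e^{-\lambda_n \sigma} e^{-\lambda_n s}$ is genuinely the $\tau_\sigma$-image of a single element $D = \sum a_n e^{-\lambda_n s}$ of $\mathcal{H}_p(\lambda)$, rather than merely a formal limit, and this is precisely what Theorem~\ref{teo montel3} guarantees. Once this identification is in place, compactness follows from the sequential characterization of compact operators valid in the Banach space $\mathcal{H}_p(\lambda)$.
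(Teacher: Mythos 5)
Your proof is correct and is exactly the argument the paper intends: the corollary is stated as a straightforward consequence of Theorem~\ref{teo montel3}, obtained by applying that theorem to an arbitrary bounded sequence and reading the conclusion through the definition \eqref{def:translation} of $\tau_\sigma$, just as you do. Your remark that the limit is itself $\tau_\sigma(D)$ for some $D\in\mathcal{H}_p(\lambda)$ is a harmless bonus (relative compactness of the image only requires the subsequential limits to exist in $\mathcal{H}_p(\lambda)$, not to lie in the range of $\tau_\sigma$).
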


We recall the following notions going back to  Bohr (see \cite{defantschoolmann2019Hptheory}). An infinite matrix $R=(r^{n}_{k})_{n,k \in \mathbb{N}}$ of rational numbers is called Bohr matrix
whenever each row  $R_n = (r_k^n)_k$ is finite, i.e. $r^{n}_{k} \neq 0$ for only finitely many $k$s.
Given  a sequence $\lambda=(\lambda_{n})$ of real numbers,  a (finite or infinite) sequence $B=(b_{k})$ in $\mathbb{R}$ is said to be a basis for $\lambda$ if it is  $\mathbb{Q}$-linearly independent and for  each $n$ there is a finite sequence  $(r^{n}_{k})_{k}$ of  rational coefficients
such that $\lambda_{n}=\sum_k r^{n}_{k} b_{k}$. In this case, te matrix  $R=(r^{n}_{j})_{n,j}$ is said to be a  Bohr matrix of $\lambda$ with respect to the basis $B$. If $\lambda$ is a frequency, such a basis always exists
(in fact it  can be chosen as a subsequence of $\lambda$), and if $R$ is the associated  Bohr matrix $R$, we write  $\lambda=(R,B)$. Observe that neither $B$ nor  $R$  need to be unique.\\

With this, the $N$th Abschnitt (for $N \in \mathbb{N}$) of a $\lambda$-Dirichlet series $D = \sum a_{n} e^{- \lambda_{n}s}$ is the series
\[
D\restrict{N}  = \sum_{\lambda_{n} \in \spanned_{\mathbb{Q}} \{b_{1} , \ldots , b_{N}  \} }
a_{n} e^{- \lambda_{n}s}\,.
\]
To illustrate this let us note that, for ordinary Dirichlet series (i.e. $\lambda = (\log n)$), the $N$th Abschnitt of a Dirichlet series is built by taking the coefficients $a_{n}$ for which $n$ depends only on the first $N$ primes. It
is well known (see e.g. \cite[Corollary~13.9]{defant2018Dirichlet}) that a Dirichlet series belongs to $\mathcal{H}_{p}((\log n))$ if and only if its $N$th Abschnitt belongs to  $\mathcal{H}_{p}((\log n))$ for every $N$, and their norms (in $\mathcal{H}_{p}((\log n))$)
are bounded.
It was shown in  \cite[Theorem~5.9]{defant2020variants} that an analogous result holds for $\mathcal{H}_{p}(\lambda)$ whenever $\lambda$ satisfies Bohr's theorem. This is an immediate consequence of
\cite[Theorem~5.8]{defant2020variants}, a Montel-type theorem for frequencies satisfying Bohr's theorem. Now that we have dropped this hypothesis in Theorem~\ref{teo montel3}, we can proceed exactly as in \cite[Theorem~5.9]{defant2020variants}, to have the following version for arbitrary frequencies.

\begin{corollary}\label{abschnitt}
Let $\lambda$ be a frequency with a decomposition $(B,R)$, and $1 \leq p \leq \infty$. A $\lambda$-Dirichlet series $D$ belongs to $\mathcal{H}_{p}(\lambda)$ if and only if $D\restrict{N} \in \mathcal{H}_{p}(\lambda)$ for all $N$ and $\sup_{N} \Vert D\restrict{N} \Vert_{\mathcal{H}_{p}(\lambda)} < \infty$. Moreover, in this case, $\Vert D \Vert_{ \mathcal{H}_{p} (\lambda)} = \sup_{N} \Vert D\restrict{N} \Vert_{\mathcal{H}_{p}(\lambda)}$.
\end{corollary}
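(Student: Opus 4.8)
The plan is to follow the argument of \cite[Theorem~5.9]{defant2020variants} verbatim, the only—and decisive—difference being that we may now feed in the Montel-type Theorem~\ref{teo montel3} \emph{without} assuming that $\lambda$ satisfies Bohr's theorem. The statement decomposes into two implications together with the final norm identity, and I would handle them in turn. For the easy implication, suppose $D=\sum a_n e^{-\lambda_n s}\in\mathcal{H}_p(\lambda)$. Passing to a $\lambda$-Dirichlet group $(G,\beta)$ and the isometry $\mathcal{H}_p(\lambda)=H_p^\lambda(G)$, the operation $D\mapsto D\restrict{N}$ amounts to restricting the Fourier spectrum to the characters $h_{\lambda_n}$ with $\lambda_n\in\spanned_{\mathbb{Q}}\{b_1,\dots,b_N\}$; since these generate a subgroup of $\hat G$, the restriction is realised by the conditional expectation onto the corresponding sub-$\sigma$-algebra, which is a contractive projection on $L_p(G)$ for every $1\le p\le\infty$. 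Hence $D\restrict{N}\in\mathcal{H}_p(\lambda)$ with $\|D\restrict{N}\|_{\mathcal{H}_p(\lambda)}\le\|D\|_{\mathcal{H}_p(\lambda)}$, so in particular $\sup_N\|D\restrict{N}\|_{\mathcal{H}_p(\lambda)}\le\|D\|_{\mathcal{H}_p(\lambda)}<\infty$.

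The substantial implication is the converse, and this is exactly where Theorem~\ref{teo montel3} is used. Assume $D\restrict{N}\in\mathcal{H}_p(\lambda)$ for all $N$ with $M:=\sup_N\|D\restrict{N}\|_{\mathcal{H}_p(\lambda)}<\infty$. Then $(D\restrict{N})_N$ is a bounded sequence in $\mathcal{H}_p(\lambda)$, so Theorem~\ref{teo montel3} provides a subsequence $(N_k)$ and a $\lambda$-Dirichlet series $\widetilde D=\sum\widetilde a_n e^{-\lambda_n s}\in\mathcal{H}_p(\lambda)$ whose coefficients $\widetilde a_n$ arise (as in the proof of that theorem) as the limits of the $n$-th Dirichlet coefficients of $D\restrict{N_k}$, and with $(D\restrict{N_k})_\sigma\to\widetilde D_\sigma$ in $\mathcal{H}_p(\lambda)$ for every $\sigma>0$. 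The key observation is that, because $B$ is a basis for $\lambda$, each $\lambda_n$ lies in $\spanned_{\mathbb{Q}}\{b_1,\dots,b_N\}$ for all sufficiently large $N$; hence the $n$-th coefficient of $D\restrict{N}$ equals $a_n$ eventually in $N$, so $\widetilde a_n=a_n$ for every $n$. Therefore $\widetilde D=D$, and in particular $D\in\mathcal{H}_p(\lambda)$.

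It then remains to upgrade the inequality to the norm identity. Once $D\in\mathcal{H}_p(\lambda)$ is established, the easy implication already yields $M=\sup_N\|D\restrict{N}\|_{\mathcal{H}_p(\lambda)}\le\|D\|_{\mathcal{H}_p(\lambda)}$. For the reverse bound I would combine the contractivity of translation with the convergence produced above: for each $\sigma>0$ we have $\|D_\sigma\|_{\mathcal{H}_p(\lambda)}=\lim_k\|(D\restrict{N_k})_\sigma\|_{\mathcal{H}_p(\lambda)}\le\sup_k\|D\restrict{N_k}\|_{\mathcal{H}_p(\lambda)}\le M$, using that $\tau_\sigma$ is a contraction on $\mathcal{H}_p(\lambda)$ (it is convolution against a Poisson-type kernel of mass one on $G$; compare Corollary~\ref{uhu}). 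Letting $\sigma\to0^+$ gives $\|D\|_{\mathcal{H}_p(\lambda)}\le M$: for $1\le p<\infty$ because $D_\sigma\to D$ in $\mathcal{H}_p(\lambda)$ (the Abel/Poisson means form an approximate identity), and for $p=\infty$ by passing through Theorem~\ref{equivalence2} and invoking Lemma~\ref{lemma0.1}--\ref{lemma0.1-4} and~\ref{lemma0.1-3}, which give $\|D_\sigma\|_{\mathcal{H}_\infty(\lambda)}=\sup_{\re s=\sigma}|f(s)|\nearrow\|f\|_\infty=\|D\|_{\mathcal{H}_\infty(\lambda)}$ as $\sigma\to0$. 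Putting the two bounds together yields $\|D\|_{\mathcal{H}_p(\lambda)}=M$.

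I expect the main difficulty to be organisational rather than conceptual, since essentially all analytic weight is now carried by Theorem~\ref{teo montel3}. The one genuine subtlety is to reconcile the \emph{translate}-based mode of convergence in that theorem with the \emph{Abschnitt}-based hypothesis here; this is precisely what the coefficient-stabilisation remark resolves, by noting that each fixed Dirichlet coefficient is eventually constant along the Abschnitte. The only remaining point that demands care is the passage $\sigma\to0$ when $p=\infty$, where norm convergence of $D_\sigma$ to $D$ is unavailable and one must instead rely on the monotone boundary behaviour recorded in Lemma~\ref{lemma0.1}.
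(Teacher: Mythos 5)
Your proposal is correct and takes essentially the same route as the paper, whose entire proof consists of the instruction to repeat the argument of \cite[Theorem~5.9]{defant2020variants} with the Bohr-theorem-dependent Montel result replaced by Theorem~\ref{teo montel3} -- exactly the plan you execute. The details you fill in (the contractive conditional-expectation projection for the easy direction, coefficient stabilisation along the Abschnitte to identify the Montel limit with $D$, and contractivity of $\tau_{\sigma}$ together with the $\sigma \to 0$ limit, handled separately for $p<\infty$ and $p=\infty$) are all sound.
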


\subsection{Equivalence theorem} \label{sec:equivalence}

As we have repeatidly mentioned, the space $\mathcal{D}_{\infty}(\lambda)$ is one of the main actors within the theory of general Dirichlet series. It consists of all $\sum a_n e^{-\lambda_n s}$ which converge on $[\re >0]$ such that the limit function $f(s)=\sum_{n=1}^{\infty}a_{n}e^{-\lambda_{n}s}: [\re >0] \to \mathbb{C}$ is bounded. Together with
\[
\Big\| \sum a_n e^{-\lambda_n s}  \Big\|_{\infty}=\sup_{s\in [\re >0]} |f(s)|
\]
we obtain a normed space (see \cite{schoolmann2018bohr} or \cite{defant2019hardy}). Since  the limit function $f$ of every $\sum a_n e^{-\lambda_n s} \in \mathcal{D}_{\infty}(\lambda)$ belongs to $\HRe$ (see e.g. \cite[Corollary~3.9]{schoolmann2018bohr}), where $a_{n}=a_{\lambda_{n}}(f)$ for all $n$, we may identify $\mathcal{D}_{\infty}(\lambda)$ with the subspace of all $f\in \HRe$, which are represented by their Dirichlet series, that is we have $f(s)=\sum_{n=1}^{\infty} a_{\lambda_{n}}(f)e^{-\lambda_{n}s}$ for every $s\in [\re >0]$.
 In particular, for every $\sum a_{n}e^{-\lambda_{n}s}\in \mathcal{D}_{\infty}(\lambda)$ have that
 \[
 a_n = \lim_{T \to \infty} \frac{1}{2T}\int_{-T}^{T} f( \sigma + it) e^{(\sigma + it) \lambda_n} dt\,,
 \]
for all  $n\in \mathbb{N}$ and $\sigma>0$, which implies
 \begin{equation} \label{boundednesscoeff}
  \sup_{n \in \mathbb{N}} |a_{n}| \le \Big\| \sum a_n e^{-\lambda_n s}  \Big\|_{\infty} \,.
 \end{equation}
But in general $\left(\mathcal{D}_{\infty}(\lambda), \|\punkt\|_{\infty} \right)$ is no Banach space, or equivalently it does not  form a closed subspace of $\HRe$ (see \cite[Theorem~5.2]{schoolmann2018bohr}). The normed space  $\mathcal{D}_{\infty}(\lambda)$, and in particular the question when it is complete, was extensively studied in \cite{CaDeMaSc_VV,defant2019hardy,defant2020variants,schoolmann2018bohr, schoolmann2018bohrA}.\\

One of the celebrated results in the theory of ordinary Dirichlet series is due to  Hedenmalm, Lindqvist and Seip \cite{HLS} and  shows that
\[
 \mathcal{D}_\infty((\log n)) = \mathcal{H}_\infty((\log n))
\]
isometrically and coefficient preserving. The result reflects  that the theory of ordinary Dirichlet series generating  bounded, holomorphic functions on the positive half plane, is intimately linked with Fourier analysis on the group
$\mathbb{T}^{\infty}$, and its proof uses  Diophantine approximation just to mention one of its crucial tools. The question of whether or not an analogous equality holds for arbitrary frequencies has also called a deal of attention over the last years.
Another important topic is Bayart's version of Montel theorem (see Section~\ref{sec.montel}). The question here was to find out for which frequencies does $\mathcal{D}_{\infty}(\lambda)$ satisfy Montel's theorem: every  bounded
sequence  $\big( \sum a_n^Ne^{-\lambda_{n}s} \big)_{N}$ of Dirichlet series in $\mathcal{D}_{\infty}(\lambda)$ admits a subsequence $(N_{k})$ and $\sum a_n e^{-\lambda_{n}s} \in \mathcal{D}_{\infty}(\lambda)$ such that $\big( \sum a_n^{N_{k}} e^{-\lambda_{n}s} \big)_{k}$ converges to $\sum a_n e^{-\lambda_{n}s}$ uniformly on
$[\re >\sigma]$ for every $\sigma>0$ as $k\to \infty$ (or, to put it in other terms, Theorem~\ref{teo montel} holds entirely for the subspace $\mathcal{D}_{\infty}(\lambda)$ of $\HRe$).\\

All these questions were clarified in  \cite[Theorem~5.1]{defant2020variants}, showing that actually they are all equivalent to each other, and equivalent to  $\lambda$ satisfying Bohr's theorem. We recall here the result.

\begin{theorem} \label{equivalence}
For every frequency $\lambda$ the following statements are equivalent:
\begin{enumerate}[(i)]
\item Bohr's theorem holds for $\lambda$,
\item  $\mathcal{D}_{\infty}(\lambda)$ is a Banach space,
\item   $\mathcal{D}_{\infty}(\lambda)$ satisfies  Montel's theorem,
\item  $\mathcal{D}_\infty(\lambda) = \mathcal{H}_\infty(\lambda)$, isometrically and coefficient preserving,
\item  $\mathcal{D}_\infty(\lambda) = \HRe$ , isometrically and coefficient preserving.
\end{enumerate}
\end{theorem}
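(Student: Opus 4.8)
The plan is to use the always-valid identification $\HRe=\mathcal{H}_\infty(\lambda)$ (Theorem~\ref{equivalence2}) to make (v) the hub of the argument, and to use the Montel theorems (Theorems~\ref{teo montel} and~\ref{teo montel3}), valid for \emph{every} frequency, as the engine driving (ii) and (iii) into the equivalence. First note that (iv)$\Leftrightarrow$(v) is immediate: since $\mathcal{H}_\infty(\lambda)=\HRe$ isometrically and coefficient-preservingly by Theorem~\ref{equivalence2}, the equalities $\mathcal{D}_\infty(\lambda)=\mathcal{H}_\infty(\lambda)$ and $\mathcal{D}_\infty(\lambda)=\HRe$ say the same thing. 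Next, (v)$\Rightarrow$(ii) is trivial ($\mathcal{D}_\infty(\lambda)$ is then the whole Banach space $\HRe$), and (v)$\Rightarrow$(iii) follows because, once $\mathcal{D}_\infty(\lambda)=\HRe$, Theorem~\ref{teo montel} \emph{is} Montel's theorem for $\mathcal{D}_\infty(\lambda)$, the convergence $f_{N_k}(\sigma+\punkt)\to f(\sigma+\punkt)$ in $\HRe$ being exactly uniform convergence on each $[\re>\sigma]$.

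To close the block (ii)--(v) I would prove (iii)$\Rightarrow$(v) and (ii)$\Rightarrow$(v) using Riesz means. For $f\in\HRe$ the means $R^\lambda_x(f)$ are $\lambda$-polynomials, hence lie in $\mathcal{D}_\infty(\lambda)$, and by Lemma~\ref{lemma0.1} they satisfy $\|R^\lambda_x(f)\|_\infty\le\|f\|_\infty$ and converge to $f$ uniformly on every $[\re>\sigma]$. For (iii)$\Rightarrow$(v), the sequence $(R^\lambda_m(f))_m$ is bounded in $\mathcal{D}_\infty(\lambda)$, so Montel produces a subsequence converging uniformly on half-planes to some $g\in\mathcal{D}_\infty(\lambda)$; since the whole sequence already converges to $f$, we get $g=f\in\mathcal{D}_\infty(\lambda)$. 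For (ii)$\Rightarrow$(v), completeness means $\mathcal{D}_\infty(\lambda)$ is closed in $\HRe$; fixing $\sigma>0$, the translates $\tau_\sigma R^\lambda_x(f)=R^\lambda_x(f)(\sigma+\punkt)$ stay in $\mathcal{D}_\infty(\lambda)$ and converge in the $\HRe$-norm to $f(\sigma+\punkt)=\tau_\sigma f$, so closedness gives $\tau_\sigma f\in\mathcal{D}_\infty(\lambda)$ for every $\sigma>0$. This forces $D_f$ to converge on each $[\re>\sigma]$, hence on $[\re>0]$, with bounded sum $f$, i.e. $f\in\mathcal{D}_\infty(\lambda)$. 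At this point (ii), (iii), (iv), (v) are all equivalent, and it remains to weave in (i).

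For (i)$\Rightarrow$(v) I would start from $f\in\HRe$ and use $|a_{\lambda_n}(f)|\le\|f\|_\infty$ (let $\sigma\to0$ in the Bohr-coefficient integral \eqref{Bohrcoeffholo}). Bounded Dirichlet coefficients give $\sigma_a(D_f)\le L(\lambda)$, so $D_f$ converges somewhere, and on $[\re>L(\lambda)]$ its sum shares the Bohr coefficients of $f$ and therefore equals $f$ there; consequently $D_f\in\mathcal{D}_\infty^{\ext}(\lambda)$. Bohr's theorem now yields $\sigma_u(D_f)\le0$, so $D_f$ converges (uniformly on half-planes) on $[\re>0]$ to the bounded function $f$, whence $f\in\mathcal{D}_\infty(\lambda)$ and (v) holds. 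The one point needing care here is the step ``converges somewhere'' when $L(\lambda)=\infty$; in that case one replaces the absolute-convergence argument by the Riesz-summability of $f$ from Lemma~\ref{lemma0.1} together with the passage from summability to convergence in \cite{defant2020riesz}.

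The remaining implication (v)$\Rightarrow$(i) is the main obstacle, and I would prove its contrapositive: if Bohr's theorem fails, exhibit a function in $\HRe\setminus\mathcal{D}_\infty(\lambda)$. Take a witness $D^\ast\in\mathcal{D}_\infty^{\ext}(\lambda)$ with $\sigma_u(D^\ast)>0$ and bounded holomorphic extension $f^\ast$ on $[\re>0]$. The delicate part is to show that $f^\ast\in\HRe$, i.e. that the bounded extension is uniformly almost periodic on \emph{every} smaller half-plane; the strategy is to bound the Riesz means $\|R^\lambda_x(D^\ast)\|_\infty\le\|f^\ast\|_\infty$ via the integral (Perron-type) representation of the means, and then feed this into \eqref{pontes} to get uniform convergence of $(R^\lambda_x(D^\ast))_x$ on all $[\re>\sigma]$, making $f^\ast$ a uniform limit of almost periodic functions. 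One must then argue that the witness can be arranged so that $D_{f^\ast}$ does \emph{not} converge on $[\re>0]$ (equivalently $\sigma_c(D^\ast)>0$), so that $f^\ast\notin\mathcal{D}_\infty(\lambda)$ while $f^\ast\in\HRe$, contradicting (v). I expect the genuine difficulty to concentrate exactly here: separating the abscissa of convergence $\sigma_c$ from the abscissa of uniform convergence $\sigma_u$ and certifying almost periodicity of the extension. This is the classical Bohr-phenomenon content, and I would discharge it by invoking the characterisations of Bohr's theorem from \cite{schoolmann2018bohr,defant2020riesz} rather than re-deriving them.
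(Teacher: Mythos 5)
A preliminary remark: the paper itself never proves Theorem~\ref{equivalence} --- it recalls it from \cite[Theorem~5.1]{defant2020variants} --- so the only traces of the intended mechanism visible in the text are Remark~\ref{dufay} (the inequality \eqref{point} implies Bohr's theorem) and the uniform-boundedness argument in the proof of Theorem~\ref{equi-1}. Measured against that, your proposal splits in two. The block (ii)$\Leftrightarrow$(iii)$\Leftrightarrow$(iv)$\Leftrightarrow$(v) is correct and self-contained: (iv)$\Leftrightarrow$(v) is Theorem~\ref{equivalence2}; (v)$\Rightarrow$(ii) and (v)$\Rightarrow$(iii) are as you say; and your two Riesz-mean arguments --- Montel applied to the bounded sequence $(R^\lambda_m(f))_m$ for (iii)$\Rightarrow$(v), and closedness of $\mathcal{D}_\infty(\lambda)$ in $\HRe$ applied to $\tau_\sigma R^\lambda_x(f)\to\tau_\sigma f$ for (ii)$\Rightarrow$(v) --- are valid and use only Lemma~\ref{lemma0.1} and Theorem~\ref{teo montel}, which hold for every frequency. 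This part is genuinely good.

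The gaps are the two implications tying (i) to this block, and they are not cosmetic. For (v)$\Rightarrow$(i), your contrapositive stalls exactly where you flag it: a witness for the failure of Bohr's theorem only gives $\sigma_u(D^\ast)>0$, whereas (v) only forbids $\sigma_c(D^\ast)>0$; since $\sigma_c\le\sigma_b\le\sigma_u$ can be strict, no contradiction results, and ``arranging'' $\sigma_c(D^\ast)>0$ is not a technicality but the entire content of the implication. Discharging it by citing ``characterisations of Bohr's theorem'' in \cite{schoolmann2018bohr,defant2020riesz} is circular, because those characterisations \emph{are} (up to relabelling) the equivalence being proved. The repair is available inside the paper and you did not use it: by your own (v)$\Rightarrow$(ii) the space $\mathcal{D}_\infty(\lambda)$ is complete; the functionals $\sum a_ne^{-\lambda_n s}\mapsto\sum_{n=1}^N a_ne^{-\sigma\lambda_n}$ are pointwise bounded on it (every member converges at $s=\sigma$), so the uniform boundedness principle makes them uniformly bounded; applying the resulting estimate to the series $\sum a_ne^{-\lambda_n z}e^{-\lambda_n s}$, $\re z>0$, and taking the supremum over $z$ yields \eqref{point}; and Remark~\ref{dufay} converts \eqref{point} into Bohr's theorem. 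This is the Banach analogue of the proof of Theorem~\ref{equi-1}, and it is exactly what \cite[Lemma~5.2]{defant2020variants} does. Symmetrically, your (i)$\Rightarrow$(v) is proved only when $L(\lambda)<\infty$; the remaining case is not marginal ($\lambda=((\log n)^\alpha)$ with $0<\alpha<1$ satisfies \eqref{LC}, hence Bohr's theorem, yet $L(\lambda)=\infty$), and there your patch is not an existing tool: Riesz summability does not imply convergence anywhere for a general frequency (if it did, (v) would hold for \emph{every} frequency, contradicting the existence, via \cite[Theorem~5.2]{schoolmann2018bohr}, of frequencies for which $\mathcal{D}_\infty(\lambda)$ is not complete), and a summability-to-convergence statement valid under Bohr's theorem is precisely the implication (i)$\Rightarrow$(v) itself, so invoking it is again circular. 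In both places what is missing is the same thing: a mechanism that extracts from the purely qualitative hypothesis the quantitative inequality \eqref{point} (equivalently, somewhere-convergence of $D_f$ for every $f\in\HRe$); a witness construction or a summability citation does not supply it.
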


Therefore, under the three  concrete conditions given in Section~\ref{diri} (in particular for our main examples $(\log n)^{\alpha}$, $(n)$ and $(\log p_{n})$) all these statements are equivalent.

\section{Pre-Fr\'echet spaces generated by abscissas} \label{Dseries}

Following an idea from \cite{andreaspabloantonio}, we suggest an abstract approach to define certain (pre-)Fr\'echet spaces of $\lambda$-Dirichlet series derived from some pre-existing normed space. In a first step, given a  normed spaces
of $\lambda$-Dirichlet series $\mathfrak{X}(\lambda)$  (satisfying certain conditions that we explicit later), we define
\begin{itemize}
\item the  abscissa $\sigma_{\mathfrak{X}(\lambda)}(D)$ associated to  $\mathfrak{X}(\lambda)$ for each $\lambda$-Dirichlet series $D$,
\end{itemize}
and then  in a second step generate the
 \begin{itemize}
\item  space $\mathfrak{X}_+(\lambda)$ of all $\lambda$-Dirichlet series  for which $\sigma_{\mathfrak{X}(\lambda)}(D)\leq 0$ (which is, as we will see,  pre-Fr\'echet)\,.
\end{itemize}
We will later apply this general procedure to study the spaces $\mathcal{D}_{\infty,+}(\lambda)$  and   $\mathcal{H}_{p,+}(\lambda)$ for $1 \leq p \leq \infty$, generated by  $\mathcal{D}_{\infty}(\lambda)$ and the
$\mathcal{H}_{p}(\lambda)$s. We will also use an analogous procedure to define the space $\HRep$ (that consists of uniformly almost periodic functions) from $\HRe$.

\subsection{Abscissas} \label{Abscissassection}
Given a frequency $\lambda$, we  consider normed spaces $\mathfrak{X}(\lambda)$  of $\lambda$-Dirichlet series satisfying the following three requirements:
\begin{enumerate}[label=(AS\arabic*)]
\item \label{as1}
All monomials $e^{-\lambda_n s} $ belong to $\mathfrak{X}(\lambda)$ and have norm $1$.
In particular, all $\lambda$-Dirichlet polynomials $\sum_{n=1}^N a_n e^{-\lambda_n s}$ belong to $\mathfrak{X}(\lambda)$, and
\[
 \Big\| \sum_{n=1}^N a_n e^{-\lambda_n s}  \Big\|_{\mathfrak{X}(\lambda)} \leq \sum_{n=1}^N |a_n|\,.
\]

\item \label{as2}
All coefficient functionals $\mathfrak{X}(\lambda) \to \mathbb{C}$ given by $\sum a_{n} e^{- \lambda_{n} s} \mapsto a_n$, are  uniformly bounded.
In particular,  there is some $C \ge 0$ such that for all $\lambda$-Dirichlet polynomials $\sum_{n=1}^N a_n e^{-\lambda_n s}$ we have
\[
\max_{1 \leq n \leq N}|a_n| \leq C \Big\|  \sum_{n=1}^N a_n e^{-\lambda_n s}  \Big\|_{\mathfrak{X}(\lambda)} \,.
\]

\item \label{as3}
For every $\sigma > 0$ the translation operator
\[
 \tau_\sigma: \mathfrak{X}(\lambda) \to \mathfrak{X}(\lambda)\,,
\]
defined as in \eqref{def:translation} is well defined and bounded.
\end{enumerate}
Whenever this is the case, we say that the space $\mathfrak{X}(\lambda)$ is $\lambda$-admissible. We also define the subspace
\[
\mathfrak{X}^0(\lambda) = \Big\{   \sum a_n e^{-\lambda_n s}\in \mathfrak{X}(\lambda)
\colon \forall \sigma > 0 ,\, \Big( \sum_{\lambda_{n}<x} a_n e^{-\lambda_n \sigma}e^{-\lambda_n s}  \Big)_{x}
\, \text{ converges in } \mathfrak{X}(\lambda)\, \Big\}\,.
\]
This is again a $\lambda$-admissible space. Note that, if the sequence of monomials $\{ e^{-\lambda_{n} s} \}_{n}$ constitutes a basis of $\mathfrak{X}(\lambda)$, then $\mathfrak{X}(\lambda) =\mathfrak{X}^0(\lambda)$. We show now some examples of admissible spaces.

\begin{example} \label{restes}
Let $\lambda$ be any frequency.
\begin{enumerate}[(a)]
\item \label{restes-A}
Let us fix some Banach space  $X$  of complex sequences satisfying the following two properties: (1)
the $e_k$s  form a normalised basis of $X$ (so in particular, $\ell_1  \subset X \subset c_0$), and (2) if $(a_n) \in X$, then
$(e^{-\lambda_n \sigma} a_n) \in X$.\\
Examples of such $X$ are $\ell_p$ for $1 \leq p < \infty$ and $c_0$. Another relevant Banach space for our purposes is $\Sigma$, defined as the  linear  space  of all  complex sequences $(a_n)$ such that $\sum a_n$ converges,
normed by $\|(a_n)\|_{\Sigma} = \sup_N |\sum_{n=1}^N a_n |$.
Property $(1)$ is straightforward, and $(2)$ may be either proved directly or by observing that, for a given $(a_n) \in \Sigma$,
the Dirichlet series $\sum a_n e^{-\lambda_n s}$ converges in $s = 0$, hence also on $[\re >0]$.
Let us observe that  $\Sigma$ is isomorphic to $c_0$, with the identification  given by $ (a_{n})  \mapsto \big(\sum_{n=N}^{\infty}a_{n} \big)_{N}$.\\

We define $\mathcal{D}_X(\lambda)$ as the linear space of all $\lambda$-Dirichlet series $\sum a_n e^{-\lambda_n s}$ such that $(a_n) \in X$. Together with the norm $\big\Vert \sum a_{n} e^{-\lambda_{n} s} \big\Vert_{\mathcal{D}_X(\lambda)}
= \|(a_n)\|_X$ it is easy to see that  $\mathcal{D}_{X}(\lambda)$ is a $\lambda$-admissible Banach space. Since we here (by definition)  identify $\mathcal{D}_X(\lambda)$ and $X$ as Banach spaces, the  monomials
$\{e^{-\lambda_n s}\}_n$ form a basis of $\mathcal{D}_X(\lambda)$ and, in particular, $\mathcal{D}_X(\lambda) = \mathcal{D}^0_X(\lambda)$.\\

Especially interesting for us are the  $\lambda$-admissible Banach spaces $ \mathcal{D}_{\ell_p}(\lambda)$, $ \mathcal{D}_{\Sigma}(\lambda)$ and $\mathcal{D}_{c_0}(\lambda)$. Moreover, for any
$X$ as above we clearly have
\[
  \mathcal{D}_{\ell_1}(\lambda)
  \subset
  \mathcal{D}_{X}(\lambda)
  \subset
  \mathcal{D}_{c_0}(\lambda)\,.
\]

\item For each $1 \leq p \leq \infty$ the space $\mathcal{H}_p(\lambda)$ is  $\lambda$-admissible. Moreover, $\mathcal{H}_p(\lambda)=\mathcal{H}^0_p(\lambda)$ for $1 < p <~\infty$, since in this case the sequence $\{e^{-\lambda_n s}\}_n$ forms a basis (see \cite[Theorem~4.16]{defant2019hardy}).
Note that $\mathcal{D}_{\ell_2}(\lambda) = \mathcal{H}_2(\lambda)$ as Banach spaces (identifying $(a_{n})_{n}$ with $\sum a_{n} e^{-\lambda_{n}s}$).

\item The space  $\mathcal{D}_\infty(\lambda)$ is clearly a $\lambda$-admissible normed space, and if $\lambda$ satisfies  Bohr's theorem (recall  Section~\ref{diri}) it coincides with  $\mathcal{D}^0_\infty(\lambda)$.
Moreover, we have from Theorem~\ref{equivalence} that $\mathcal{D}_\infty(\lambda) = \mathcal{H}_\infty(\lambda)$ iff $\lambda$ satisfies  Bohr's theorem.
\end{enumerate}
\end{example}

Given a $\lambda$-admissible space $\mathfrak{X}(\lambda)$ the $\mathfrak{X}(\lambda)$-abscissa of an arbitrary  $\lambda$-Dirichlet series $D=\sum a_{n}e^{-\lambda_{n}s}$ is defined as
\begin{equation}\label{carapaz}
\sigma_{\mathfrak{X}(\lambda)}(D)  = \inf  \Big\{ \sigma \in \mathbb{R} \colon \sum a_{n} e^{-\lambda_n \sigma}
\, e^{-\lambda_n s} \in \mathfrak{X}(\lambda)   \Big\}  \, \in  \, [-\infty, \infty]\,.
\end{equation}
Also, the $\mathfrak{X}^{0}(\lambda)$-abscissa (recall that this is also a $\lambda$-admissible space) of $D=\sum a_{n}e^{-\lambda_{n}s}$ is the of infimum all real $\sigma$ for which the partial sums $\Big(\sum_{1}^{N} a_{n}e^{-\lambda_{n}\sigma}e^{-\lambda_{n}s}\Big)_{N}$ converges in $\mathfrak{X}(\lambda)$.

\begin{example}\label{exA}
As in \cite{andreaspabloantonio}, the classical abscissas of convergence  (recall Section~\ref{diri}) can be reformulated in terms of abscissas of certain admissible spaces.
Let $\lambda$ be a frequency, and $D \in \mathfrak{D}(\lambda)$. Then
\begin{enumerate}[(a)]
\item $\sigma_c(D) = \sigma_{\mathcal{D}_\Sigma(\lambda)}(D)$
\item $\sigma_a(D) = \sigma_{\mathcal{D}_{\ell_1}(\lambda)}(D)$
\item $\sigma_b(D) = \sigma_{\mathcal{D}_\infty(\lambda)}(D)$
\item \label{exA4} $\sigma_u(D) = \sigma_{\mathfrak{X}^0(\lambda)}(D)$, where $\mathfrak{X}(\lambda)=\mathcal{D}_{\infty}(\lambda)$ or $\mathcal{H}_{\infty}(\lambda)$.
\end{enumerate}
\end{example}

A useful tool  for the understanding of such abscissas are the so-called Bohr-Cahen formulas
for $\sigma_i(D)$ with $i=c,u,a$. A careful analysis of the typical proofs shows how to extend these formulas to our abstract setting
(see e.g \cite{defant2018Dirichlet,queffelec2013diophantine} and in particular \cite[Proposition~2.2]{andreaspabloantonio}), provided that $\mathfrak{X}(\lambda)$ is a Banach space.

\begin{proposition}\label{Prop:BasicEstimationsAbscissae}
Let $\mathfrak{X}(\lambda)$ be a $\lambda$-admissible Banach space of $\lambda$-Dirichlet series. Then for every $D = \sum a_{n} e^{-\lambda_n s} \in \mathfrak{D}(\lambda)$ we have
\[
\sigma_{\mathfrak{X}^{0}(\lambda)}(D) \leq  \limsup_{x \to \infty}
\frac{\log \big\| \sum_{\lambda_{n}<x} a_{n} e^{-\lambda_n s} \big\|_{\mathfrak{X}(\lambda)}}{x}  \,,
\]
where equality holds whenever the abscissa is non-negative. 
%
\end{proposition}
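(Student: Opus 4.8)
The plan is to read this as a Bohr--Cahen formula and prove both assertions by an Abel summation (integration by parts) carried out directly in $\mathfrak{X}(\lambda)$, linking the shifted cut-offs $S_{x}^{\sigma}:=\sum_{\lambda_{n}<x}a_{n}e^{-\lambda_{n}\sigma}e^{-\lambda_{n}s}$ with the plain cut-offs $S_{x}:=\sum_{\lambda_{n}<x}a_{n}e^{-\lambda_{n}s}$; write $A_{x}:=\|S_{x}\|_{\mathfrak{X}(\lambda)}$ and $L:=\limsup_{x\to\infty}\frac{\log A_{x}}{x}$. Since $\lambda$ is strictly increasing and unbounded, $\{n\colon\lambda_{n}<x\}=\{1,\dots,N(x)\}$ with $N(x)\to\infty$, so $(S_{x}^{\sigma})_{x}$ is merely a reparametrisation of the sequence of partial sums of the shifted series; hence the net converges iff these partial sums do, and $\sigma_{\mathfrak{X}^{0}(\lambda)}(D)=\inf\{\sigma\in\mathbb{R}\colon (S_{x}^{\sigma})_{x}\text{ converges in }\mathfrak{X}(\lambda)\}$. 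We may assume $D\neq 0$; fixing $n$ with $a_{n}\neq 0$, \ref{as2} gives $A_{x}\ge\frac{1}{C}|a_{n}|>0$ for all $x>\lambda_{n}$, whence $\frac{\log A_{x}}{x}\to 0$ from below and in particular $L\ge 0$.

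For $\sigma_{\mathfrak{X}^{0}(\lambda)}(D)\le L$ I would start from the identity
\[
S_{x}^{\sigma} = e^{-x\sigma}S_{x} + \sigma\int_{0}^{x} e^{-y\sigma}S_{y}\,dy
\]
in $\mathfrak{X}(\lambda)$, which holds because $y\mapsto S_{y}$ is a step function taking finitely many values on $[0,x]$ (all cut-offs lie in $\mathfrak{X}(\lambda)$ by \ref{as1}), so the Bochner integral reduces to a finite sum and the identity follows from $\sigma\int_{\lambda_{n}}^{x}e^{-y\sigma}dy=e^{-\lambda_{n}\sigma}-e^{-x\sigma}$. Now fix $\sigma>L$ and $\varepsilon>0$ with $L+\varepsilon<\sigma$; since $L\ge 0$ and $A_{y}$ is bounded on compacts, there is $C$ with $A_{y}\le Ce^{(L+\varepsilon)y}$ for all $y\ge 0$. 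Then $\|e^{-x\sigma}S_{x}\|_{\mathfrak{X}(\lambda)}\le Ce^{-(\sigma-L-\varepsilon)x}\to 0$, while $\int_{0}^{\infty}e^{-y\sigma}A_{y}\,dy\le C\int_{0}^{\infty}e^{-(\sigma-L-\varepsilon)y}dy<\infty$ shows the integral converges absolutely as $x\to\infty$. Hence $(S_{x}^{\sigma})_{x}$ converges in $\mathfrak{X}(\lambda)$ for every $\sigma>L$, so $\sigma_{\mathfrak{X}^{0}(\lambda)}(D)\le\sigma$ for all such $\sigma$ and therefore $\sigma_{\mathfrak{X}^{0}(\lambda)}(D)\le L$.

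For the reverse inequality under the hypothesis $\sigma_{0}:=\sigma_{\mathfrak{X}^{0}(\lambda)}(D)\ge 0$, I would invert the same computation to obtain
\[
S_{x} = e^{x\sigma}S_{x}^{\sigma} - \sigma\int_{0}^{x} e^{y\sigma}S_{y}^{\sigma}\,dy \,.
\]
Choose any $\sigma>\sigma_{0}$; as $\sigma_{0}\ge 0$ this forces $\sigma>0$. Since $\sigma>\sigma_{0}$, the net $(S_{x}^{\sigma})_{x}$ converges and hence is bounded, $B:=\sup_{x}\|S_{x}^{\sigma}\|_{\mathfrak{X}(\lambda)}<\infty$. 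Using $\sigma>0$ to evaluate $\int_{0}^{x}e^{y\sigma}dy=(e^{x\sigma}-1)/\sigma$ we get $A_{x}\le Be^{x\sigma}+B(e^{x\sigma}-1)\le 2Be^{x\sigma}$, so $\frac{\log A_{x}}{x}\le\sigma+\frac{\log 2B}{x}$ and thus $L\le\sigma$. Letting $\sigma\downarrow\sigma_{0}$—possible while keeping $\sigma>0$ exactly because $\sigma_{0}\ge 0$—gives $L\le\sigma_{0}$, and together with the first part $L=\sigma_{0}$.

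The one genuinely delicate point, and the reason equality is claimed only for non-negative abscissa, is the interplay with the sign of $\sigma$ in this last estimate: keeping $\int_{0}^{x}e^{y\sigma}dy$ of order $e^{x\sigma}$ requires $\sigma>0$. Were one forced to take $\sigma<0$ (as when $\sigma_{0}<0$), the identical computation only bounds $A_{x}\le 2B$, hence $L\le 0$; combined with the lower bound $L\ge 0$ noted in the first paragraph this forces $L=0$, so the formula cannot detect a strictly negative abscissa. Everything else—the two integration-by-parts identities and the interchange of the finite sum with the integral—is routine, since below any fixed $x$ only finitely many frequencies contribute.
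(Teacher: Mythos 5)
Your proof is correct and takes essentially the same approach as the paper, which establishes this proposition simply by asserting that the classical Bohr--Cahen arguments (see \cite[Proposition~2.2]{andreaspabloantonio} and the cited monographs) extend to $\lambda$-admissible Banach spaces. What you have written out --- the Abel summation identity carried out in $\mathfrak{X}(\lambda)$, completeness giving convergence of the Bochner integral for $\sigma>L$, and the inverted identity (where $\sigma>0$ is what forces the non-negativity hypothesis) for the reverse bound --- is precisely that extension.
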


\subsection{The space} \label{baronnoir}

Given a $\lambda$-admissible normed space $\mathfrak{X}(\lambda)$, we define the vector space
\begin{equation} \label{fx}
\mathfrak{X}_+(\lambda) = \big\{ D \in \mathfrak{D}(\lambda)
\colon
\sigma_{\mathfrak{X}(\lambda)}(D) \leq 0   \big\}\,,
\end{equation}
which consists of all $\lambda$-Dirichlet series so that every translation belongs to $\mathfrak{X}(\lambda)$. Our first task now is to endow this space with some structure. To begin with,
for each $k \in \mathbb{N}$ the expression
\begin{equation}\label{step}
\|D\|_{\mathfrak{X}(\lambda),k} = \|D_{1/k}\|_{\mathfrak{X}(\lambda)}
\end{equation}
defines a norm, so that the sequence  $\big( \|\punkt\|_{\mathfrak{X}(\lambda),k} \big)_{k}$ endows $\mathfrak{X}(\lambda)$ with a pre-Fr\'echet topology. Our second step is to give a representation
as a projective limit of a countable projective spectrum of normed spaces. In fact, we do it in two different ways. \\
On the one hand, for each $k$ we consider the space
\begin{equation} \label{hammerklavier}
\mathfrak{X}_k(\lambda) = \big\{ D \in \mathfrak{D}(\lambda)
\colon
  D_{1/k} \in \mathfrak{X}(\lambda) \big\} \,,
\end{equation}
on which $\|\punkt\|_{\mathfrak{X}(\lambda),k}$ defines a norm. If $i_k: \mathfrak{X}_{k+1}(\lambda) \hookrightarrow \mathfrak{X}_{k}(\lambda) $ is the canonical
injection, then the pair
\begin{equation} \label{spec1}
\big(\mathfrak{X}_k(\lambda), i_k  \big)_{k \in \mathbb{N}}\,
\end{equation}
forms a countable projective spectrum of normed spaces.\\
On the other hand, for each $k$ we define the mapping $\tau_k: \mathfrak{X}(\lambda) \hookrightarrow \mathfrak{X}(\lambda)$ by $D \mapsto  D_{\frac{1}{k}- \frac{1}{k+1}}$. Again, the pair
\begin{equation} \label{spec2}
\big(\mathfrak{X}(\lambda), \tau_k  \big)_{k \in \mathbb{N}}\,,
\end{equation}
defines a  countable projective spectrum of normed spaces.\\
Let us observe that
\begin{equation} \label{cruc}
\varphi_k: \mathfrak{X}_k(\lambda) \to \mathfrak{X}(\lambda) \, \text{ given by } \,
\sum a_n e^{-\lambda_n s}\to \sum a_n
e^{-\frac{\lambda_n}{k} }e^{-\lambda_n s}
\end{equation}
is an isometric bijection, where the inverse is given by $\varphi_{k}^{-1}(\sum a_{n}e^{-\lambda_{n}s})=\sum a_{n}e^{\frac{\lambda_{n}}{k}}e^{-\lambda_{n}s}$. With this,
it is plain that the spectra defined in \eqref{spec1} and \eqref{spec2} are equivalent, in the sense that
\begin{equation} \label{trompa}
\tau_k \circ\varphi_{k+1} = \varphi_{k} \circ i_k
\end{equation}
for all $k$ (see Remark~\ref{proj}). This leads to the two announced  representations of the pre-Fr\'echet space $\mathfrak{X}_+(\lambda)$ as a projective limit of a countable spectrum of normed spaces.

\begin{proposition} \label{projlimitX}
Let $\lambda$ be a frequency and $\mathfrak{X}(\lambda)$ be a $\lambda$-admissible normed space. Then  $\mathfrak{X}_+(\lambda)$ is a pre-Fr\'echet space, which is a Fr\'echet space whenever
$\mathfrak{X}(\lambda)$ is a Banach space.
Also, the mappings
\[
\mathfrak{X}_+(\lambda) =  \proj (\mathfrak{X}_{k}(\lambda), i_k) \,
\text{given by } \,
D \mapsto (D)_{k=1}^\infty
\]
and
\[
\mathfrak{X}_+(\lambda) =  \proj (\mathfrak{X}(\lambda), \tau_k)\,
\text{given by } \,
D \mapsto (D_{1/k})_{k=1}^\infty
\]
are isomorphisms of pre-Fr\'echet spaces.
\end{proposition}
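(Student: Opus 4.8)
The plan is to reduce everything to the two projective spectra already assembled in \eqref{spec1} and \eqref{spec2}, and to exhibit $\mathfrak{X}_+(\lambda)$ as their common projective limit. The first thing I would record is a monotonicity observation that governs the abscissa: by \ref{as3}, if $D_\sigma \in \mathfrak{X}(\lambda)$ for some $\sigma$, then $D_{\sigma'} = \tau_{\sigma'-\sigma}(D_\sigma) \in \mathfrak{X}(\lambda)$ for every $\sigma' > \sigma$. Hence the set of $\sigma$ with $D_\sigma \in \mathfrak{X}(\lambda)$ is an up-set, and consequently $\sigma_{\mathfrak{X}(\lambda)}(D) \leq 0$ is equivalent to $D_\sigma \in \mathfrak{X}(\lambda)$ for all $\sigma > 0$; in particular $D_{1/k} \in \mathfrak{X}(\lambda)$ for every $k$. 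From this it is immediate that $\mathfrak{X}_+(\lambda)$ is a vector space (translation is linear and $(D+E)_\sigma = D_\sigma + E_\sigma$), that each $\|\punkt\|_{\mathfrak{X}(\lambda),k}$ from \eqref{step} is a genuine norm on it, and that the family separates points: if all $\|D_{1/k}\|_{\mathfrak{X}(\lambda)}$ vanish, then by \ref{as2} every coefficient $a_n e^{-\lambda_n/k}$ vanishes, forcing $D = 0$. A countable separating family of norms is precisely a pre-Fr\'echet structure.

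Next I would identify the first projective limit explicitly. Since $\mathfrak{X}_{k+1}(\lambda) \subseteq \mathfrak{X}_k(\lambda)$ (again by the up-set argument, applied to $D_{1/k} = \tau_{1/k-1/(k+1)}(D_{1/(k+1)})$, which also yields boundedness of $i_k$ with norm $\leq \|\tau_{1/k-1/(k+1)}\|$), the linking map $i_k$ is literally inclusion at the level of formal Dirichlet series. Therefore a compatible thread $(D_k)_k \in \proj(\mathfrak{X}_k(\lambda), i_k)$, satisfying $i_k(D_{k+1}) = D_k$, must have $D_{k+1} = D_k$ as series for every $k$; it is thus a single series $D$ lying in every $\mathfrak{X}_k(\lambda)$, i.e. a single element of $\mathfrak{X}_+(\lambda)$. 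This makes $D \mapsto (D)_{k=1}^\infty$ a bijection onto $\proj(\mathfrak{X}_k(\lambda),i_k)$, and it is a homeomorphism because the canonical seminorms \eqref{clementi} of the projective limit, $\max_{1\le m\le n}\|D\|_{\mathfrak{X}(\lambda),m}$, and the defining family $(\|\punkt\|_{\mathfrak{X}(\lambda),k})_k$ generate the same topology (one is the standard max-rearrangement of the other).

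The second representation then follows formally. The maps $\varphi_k$ of \eqref{cruc} are isometric bijections with continuous inverse, and the intertwining identity \eqref{trompa}, namely $\tau_k \circ \varphi_{k+1} = \varphi_k \circ i_k$, is exactly the hypothesis of Remark~\ref{proj}. Hence $(D_k)_k \mapsto (\varphi_k(D_k))_k$ is an isomorphism $\proj(\mathfrak{X}_k(\lambda), i_k) \to \proj(\mathfrak{X}(\lambda), \tau_k)$; composing it with the first isomorphism and using $\varphi_k(D) = D_{1/k}$ yields precisely the stated map $D \mapsto (D_{1/k})_{k=1}^\infty$. Finally, when $\mathfrak{X}(\lambda)$ is a Banach space each $\mathfrak{X}_k(\lambda)$ is Banach, being isometric to $\mathfrak{X}(\lambda)$ via $\varphi_k$, so the projective limit of the Banach spectrum \eqref{spec1} is complete as recalled in the preliminaries, making $\mathfrak{X}_+(\lambda)$ a Fr\'echet space.

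I expect no deep obstacle; the one point deserving genuine care is the monotonicity/up-set argument of the first paragraph, since everything downstream --- the identity $\mathfrak{X}_+(\lambda) = \{D \colon D_\sigma \in \mathfrak{X}(\lambda) \ \forall \sigma>0\}$, the inclusions $\mathfrak{X}_{k+1}(\lambda) \subseteq \mathfrak{X}_k(\lambda)$ that trivialise the linking maps $i_k$, and their boundedness --- rests on the boundedness of $\tau_\sigma$ for positive $\sigma$ granted by \ref{as3}. Once that is secured, the two isomorphisms are bookkeeping through Remark~\ref{proj}, and completeness is the standard fact that a countable projective limit of Banach spaces is Fr\'echet.
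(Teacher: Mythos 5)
Your proof is correct and follows essentially the same route as the paper: identify $\mathfrak{X}_+(\lambda)$ with $\proj(\mathfrak{X}_k(\lambda),i_k)$ directly (compatible threads are constant, and $\sigma_{\mathfrak{X}(\lambda)}(D)\le 1/k$ for all $k$ forces $D\in\mathfrak{X}_+(\lambda)$), transfer to the second spectrum via the isometries $\varphi_k$ of \eqref{cruc}, the intertwining \eqref{trompa} and Remark~\ref{proj}, and deduce completeness in the Banach case from the standard fact that countable projective limits of Banach spaces are Fr\'echet. The only difference is that you make explicit several points the paper leaves implicit --- the up-set/monotonicity consequence of \ref{as3}, the separation of points via \ref{as2}, and the comparison of the seminorm families --- all of which are correct.
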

\begin{proof}
By \eqref{trompa} (see Remark~\ref{proj}) it is enough to check this just for the first representation. We denote the mapping by $\Phi$, which is clearly linear and injective. By the very definition of the
spectrum, if $(D^{k}) \in \proj (\mathfrak{X}_{k}(\lambda), i_k)$, then there is some $D \in \mathfrak{D}(\lambda)$ so that $D^{(k)} = D$ for every $k$. Note that this implies that
$\sigma_{\mathfrak{X}(\lambda)}(D) \leq 1/k$ for all $k$, hence $D \in \mathfrak{X}_{+}(\lambda)$ and clearly $\Phi (D) = (D^{(k)})$, so that $\Phi$ is surjective. Finally, if $\pi_{k}: \proj (\mathfrak{X}_{k}(\lambda), i_k) \to \mathfrak{X}_{k}(\lambda)$
is the canonical projection, one easily gets that both $\pi_{k} \circ \Phi$ and $\Phi^{-1} \circ \pi_{k}^{-1}$ are continuous for every $k$; hence both $\Phi$ and $\Phi^{-1}$ are continuous. This completes the
argument. If $\mathfrak{X}(\lambda)$ is complete, then the description as a  projective limit yields the completeness of $\mathfrak{X}_{+}(\lambda)$.
\end{proof}

\begin{remark} \label{gould}
Given a frequency  $\lambda$, we define the K\"othe matrix
\begin{equation} \label{rusalka}
A(\lambda) = (e^{-\frac{\lambda_n}{k}})_{n,k=1}^\infty\,.
\end{equation}
As a consequence of Proposition~\ref{projlimitX} and Remark~\ref{proj} we have
\begin{equation} \label{kharris}
\mathcal{D}_{\ell_p,+}(\lambda) = \ell_{p}(A(\lambda)) \, \text{ and }  \mathcal{D}_{c_0,+}(\lambda) = c_0(A(\lambda))\,  \,,
\end{equation}
where in both cases $\sum a_{n} e^{-\lambda_{n} s}$ is identified with the sequence $(a_{n})_{n}$.
Note  the particular case $\mathcal{H}_{2,+}(\lambda) = \ell_{2}(A)$.
All these spaces are Fr\'echet-Schwartz, since diagonal operators on $\ell_p$ or
$c_0$ are compact whenever the diagonal is a zero sequence (see again Remark~\ref{proj1}).
We show now, in a more general context, that
\begin{equation} \label{no}
\mathcal{D}_{\Sigma,+}(\lambda)\hookrightarrow \mathcal{D}_{c_0,+}(\lambda) = c_0(A(\lambda))\,.
\end{equation}
(as sequence spaces), but that in contrast to \eqref{kharris} this inclusion  in general is strict.\\

If $\mathfrak{X}(\lambda)$ is an admissible normed space, then by \ref{as2}, we can find some $C\geq 1$ so that
\[
\vert a_{n} e^{-\lambda_{n}/k} \vert \leq C \big\Vert \textstyle \sum a_{n} e^{-\lambda_{n}s} \Vert_{\mathfrak{X}(\lambda),k}
\]
for every  $\sum a_{n} e^{-\lambda_{n}s} \in \mathfrak{X}_{+}(\lambda)$ and all $k$. Then, given some $k$ we can pick any $k < m$ to have
\[
\vert a_{n} e^{-\lambda_{n}/k} \vert \leq C \big\Vert \textstyle \sum a_{n} e^{-\lambda_{n}s} \Vert_{\mathfrak{X}(\lambda),m} \vert  e^{-\lambda_{n} (1/k - 1/m)} \vert
\]
This shows that $(a_{n})_{n} \in c_{0}(A(\lambda))$ and the inclusion
\[
\mathfrak{X}_+(\lambda)\hookrightarrow c_{0}(A(\lambda))
\]
is continuous. Taking $\mathfrak{X}_+(\lambda) =\mathcal{D}_{\Sigma,+}(\lambda)$ this gives the inclusion in \eqref{no}. Note that, for $\lambda= (\log n)$, the series $\zeta = \sum n^{s}  \notin
\mathcal{D}_{\Sigma,+}(\lambda)$ (because  $\sigma_{\mathcal{D}_{\Sigma}(\lambda)}(\zeta)= \sigma_c(\zeta) = 1$, recall Example~\ref{exA}), but the sequence of coefficients belongs to  $c_{0}(A (\log n))$. This shows that the inclusion is (as we announced) in general strict.\\
On the other hand, if $\sum a_{n}e^{-\lambda_{n}s}\in \mathcal{D}_{\ell_1,+}(\lambda)$, then by \ref{as1}, the sequence $(\sum_{n=1}^{N} a_{n}e^{-\lambda_{n}/k}e^{-\lambda_{n}s})_{N}$   is Cauchy for every $k$.
If $\mathfrak{X}(\lambda)$ is complete, then the sequence converges and, by \ref{as2}, it does it to $\sum a_{n}e^{-\lambda_{n}/k}e^{-\lambda_{n}s}$, that therefore belongs to $\mathfrak{X}(\lambda)$.
This shows that $\sum a_{n}e^{-\lambda_{n}s}\in \mathfrak{X}^{0}_{+}(\lambda)$ or, to put it in other terms
\begin{equation} \label{mistake}
\mathcal{D}_{\ell_1,+}(\lambda) \hookrightarrow \mathfrak{X}^{0}_{+}(\lambda)
\end{equation}
(with continuous inclusion) for every $\lambda$-admissible Banach space $\mathfrak{X}(\lambda)$. This in particular gives, for every $\lambda$-admissible Banach space $\mathfrak{X}(\lambda)$,  the canonical continuous
inclusions
\begin{equation} \label{passt}
\lambda_1(A (\lambda)) = \mathcal{D}_{\ell_1,+}(\lambda) \, \hookrightarrow \, \mathfrak{X}^{0}_{+}(\lambda)
\,\hookrightarrow \,\mathfrak{X}_{+}(\lambda) \,\hookrightarrow \,
 \mathcal{D}_{c_0,+}(\lambda)
 =
c_0(A(\lambda))\,.
\end{equation}
This implies that, in this case, $\sigma_{\mathfrak{X}(\lambda)}(D) \leq \sigma_a(D)$ for every $D \in \mathfrak{D}(\lambda)$. In view of
this, and since for every $D \in \mathfrak{D}(\lambda)$ we have the well know inequalities $\sigma_{c}(D)\le \sigma_{b}(D) \le \sigma_{u}(D)\le \sigma_{a}(D)$,  one may wonder if $\sigma_{c}(D)\le \sigma_{\mathfrak{X}(\lambda)}(D)$ for every $\lambda$-admissible Banach space $\mathfrak{X}(\lambda)$ and $D \in \mathfrak{D}(\lambda)$. But this is false -- take $D =\sum \frac{1}{n^{1/2}} n^{-s}$,
then $\sigma_{c}(D)= 1/2$, but $\sigma_{\mathcal{H}_{2,+}}(D)= 0$.
\end{remark}

\subsection{Bases}

\begin{proposition} \label{basisA}
Let $\mathfrak{X}(\lambda)$ be a $\lambda$-admissible space. Then the following are equivalent:
\begin{enumerate}[(i)]
\item \label{basisA1}
The sequence of monomials $e^{-\lambda_n s}$ forms a basis of  $\mathfrak{X}_+(\lambda)$,
\item \label{basisA2}
$\mathfrak{X}(\lambda) = \mathfrak{X}^0(\lambda)$,
\item \label{basisA3}
$\sigma_{\mathfrak{X}(\lambda)}(D)=\sigma_{\mathfrak{X}^0(\lambda)}(D)$ for all $D\in\mathcal{D}(\lambda)$.
\end{enumerate}
In particular, if the monomials $\{e^{-\lambda_n s}\}_n$ are a basis of $\mathfrak{X}(\lambda)$, then they  also form a basis
of $\mathfrak{X}_+(\lambda)$.
\end{proposition}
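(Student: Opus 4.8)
The plan is to reduce each of the three conditions to a statement about norm-convergence in $\mathfrak{X}(\lambda)$ of the partial sums $S_N(D):=\sum_{n=1}^{N}a_n e^{-\lambda_n s}$ of a series $D=\sum a_n e^{-\lambda_n s}$, and then to shuttle such convergence between different translation levels using the boundedness of $\tau_\sigma$ granted by \ref{as3}. I would begin with three bookkeeping remarks. First, since $\lambda$ is strictly increasing and unbounded, the index set $\{n:\lambda_n<x\}$ is an initial segment $\{1,\dots,N(x)\}$ with $N(x)\to\infty$, so the net $\big(\sum_{\lambda_n<x}\,\cdot\,\big)_x$ in the definition of $\mathfrak{X}^0(\lambda)$ converges in $\mathfrak{X}(\lambda)$ exactly when $(S_N(\cdot))_N$ does; and whenever it does, \ref{as2} forces the limit to be the full (translated) series. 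Second, again by \ref{as2} the coefficients in any basis expansion must be the Dirichlet coefficients, so \ref{basisA1} merely says that each $D\in\mathfrak{X}_+(\lambda)$ equals $\sum_n a_n e^{-\lambda_n s}$ in the pre-Fr\'echet topology; unwinding the seminorms $\|D\|_{\mathfrak{X}(\lambda),k}=\|D_{1/k}\|_{\mathfrak{X}(\lambda)}$ and using $S_N(D)_{1/k}=S_N(D_{1/k})$, this is equivalent to the assertion that for every $D\in\mathfrak{X}_+(\lambda)$ and every $k$ the partial sums of $D_{1/k}$ converge in $\mathfrak{X}(\lambda)$. Third, the inclusion $\mathfrak{X}^0(\lambda)\subseteq\mathfrak{X}(\lambda)$ gives $\sigma_{\mathfrak{X}(\lambda)}(D)\le\sigma_{\mathfrak{X}^0(\lambda)}(D)$ for all $D$, so \ref{basisA3} reduces to the reverse inequality.

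For \ref{basisA2}$\Leftrightarrow$\ref{basisA3} I would argue as follows. To get \ref{basisA2}$\Rightarrow$\ref{basisA3}, fix $D$ and $\sigma>\sigma_{\mathfrak{X}(\lambda)}(D)$, choose $\sigma'\in(\sigma_{\mathfrak{X}(\lambda)}(D),\sigma)$ with $D_{\sigma'}\in\mathfrak{X}(\lambda)=\mathfrak{X}^0(\lambda)$, and apply the defining property of $\mathfrak{X}^0(\lambda)$ to $D_{\sigma'}$ at the positive parameter $\sigma-\sigma'$; this makes the partial sums of $D_\sigma$ converge, so $\sigma_{\mathfrak{X}^0(\lambda)}(D)\le\sigma$, and letting $\sigma\downarrow\sigma_{\mathfrak{X}(\lambda)}(D)$ finishes this direction. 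For \ref{basisA3}$\Rightarrow$\ref{basisA2}, take $D\in\mathfrak{X}(\lambda)$, note $\sigma_{\mathfrak{X}(\lambda)}(D)\le 0$ and hence $\sigma_{\mathfrak{X}^0(\lambda)}(D)\le 0$ by hypothesis; then for each $\sigma>0$ there is $\sigma'<\sigma$ with $(S_N(D_{\sigma'}))_N$ convergent, and applying the bounded operator $\tau_{\sigma-\sigma'}$ of \ref{as3} transports this to $(S_N(D_\sigma))_N$, giving $D\in\mathfrak{X}^0(\lambda)$.

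The equivalence \ref{basisA1}$\Leftrightarrow$\ref{basisA2} would run on the same mechanism. For \ref{basisA2}$\Rightarrow$\ref{basisA1} I would fix $D\in\mathfrak{X}_+(\lambda)$ and $k$, use that $D_{1/(k+1)}\in\mathfrak{X}(\lambda)=\mathfrak{X}^0(\lambda)$, and apply the definition of $\mathfrak{X}^0(\lambda)$ at the positive parameter $1/k-1/(k+1)$ to make the partial sums of $D_{1/k}$ converge; by the reduction of the first paragraph this is exactly \ref{basisA1}. Conversely, given $D\in\mathfrak{X}(\lambda)\subseteq\mathfrak{X}_+(\lambda)$ and $\sigma>0$, I would pick $k$ with $1/k<\sigma$, obtain convergence of the partial sums of $D_{1/k}$ from \ref{basisA1}, and again transport it to $D_\sigma$ via the bounded operator $\tau_{\sigma-1/k}$, so $D\in\mathfrak{X}^0(\lambda)$. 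The final assertion is then immediate: if the monomials already form a basis of $\mathfrak{X}(\lambda)$, then for $D\in\mathfrak{X}(\lambda)$ and $\sigma>0$ the translate $D_\sigma$ lies in $\mathfrak{X}(\lambda)$ by \ref{as3} and is the limit of its own partial sums, which is precisely \ref{basisA2}, whence \ref{basisA1} follows by the equivalence just established.

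I expect the only genuinely delicate point, and the reason none of these arguments can be symmetrised, to be the one-sidedness of \ref{as3}: the operators $\tau_\rho$ are bounded only for $\rho>0$, so convergence of partial sums can be pushed from a smaller translation level to a larger one but never the other way. Everything hinges on matching this with the ``$\forall\,\sigma>0$'' quantifier built into $\mathfrak{X}^0(\lambda)$ and into the seminorms $\|\cdot\|_{\mathfrak{X}(\lambda),k}$, and on verifying that in each implication one indeed translates by a strictly positive amount.
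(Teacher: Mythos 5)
Your proposal is correct and proceeds by essentially the same mechanism as the paper's proof: reduce everything to convergence of partial sums of translates, use (AS2) to identify all limits and basis coefficients with the Dirichlet coefficients, and use (AS3) to push convergence of partial sums from a smaller translation parameter to a strictly larger one, matched against the ``for all $\sigma>0$'' quantifier in the definition of $\mathfrak{X}^0(\lambda)$ and in the seminorms $\Vert \cdot \Vert_{\mathfrak{X}(\lambda),k}$. The only differences are organizational: you establish the equivalences pairwise, (i)$\Leftrightarrow$(ii) and (ii)$\Leftrightarrow$(iii), instead of the paper's cycle (i)$\Rightarrow$(ii)$\Rightarrow$(iii)$\Rightarrow$(i), and you make explicit a couple of translation steps (e.g.\ passing from the levels $1/k$ to arbitrary $\sigma>0$ in (i)$\Rightarrow$(ii)) that the paper leaves implicit.
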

\begin{proof}
Assume that \ref{basisA1} holds and take $\sum a_{n} e^{-\lambda_{n}s} \in \mathfrak{X} (\lambda) \subset \mathfrak{X}_+(\lambda)$. Since the monomials form a basis of the latter, the
partial sums converge to the series for every seminorm $\Vert \punkt \Vert_{\mathfrak{X}(\lambda),k}$. That is to say that the partial sums of $\sum a_{n} e^{-\lambda_{n}/k} e^{-\lambda_{n}s}$ converge (in $\mathfrak{X}(\lambda)$) to the series. This
shows \ref{basisA2}. Clearly \ref{basisA2} implies \ref{basisA3}. To finish the proof suppose that \ref{basisA3}  holds, and let us show \ref{basisA1}. Take  $\sum a_{n} e^{-\lambda_{n}s} \in \mathfrak{X}_{+}(\lambda)$.
By assumption $\sigma_{\mathfrak{X}^0(\lambda)}(D)=\sigma_{\mathfrak{X}(\lambda)}(D) \leq 0$ and therefore, given any $k \in \mathbb{N}$, we have
\[
\sum a_n e^{-\lambda_n /k} e^{-\lambda_n s} \in \mathfrak{X}(\lambda)\,,
\]
and $\Big( \sum_{n=1}^{N} a_n e^{-\lambda_n /k  + \sigma} e^{-\lambda_n s} \Big)_{N}$  converges for every $\sigma >0$ (and, by \ref{as2} it has to do it to the series itself). Since this holds for every $k$ we immediately have that \[
\lim_N \sum_{n=1}^N a_n e^{-\lambda_n /k} e^{-\lambda_n s} = \sum a_n e^{-\lambda_n /k} e^{-\lambda_n s}
\]
in  $\mathfrak{X}(\lambda)$, which by \eqref{cruc} implies
\[
\lim_N \sum_{n=1}^N a_n e^{-\lambda_n s} = \sum a_n  e^{-\lambda_n s}
\]
in  $\mathfrak{X}_k(\lambda)$ for every $k$. This yields the  conclusion.
\end{proof}

%
%
%
%

\subsection{Nuclearity} \label{sec:nuc}

We finish this section by figuring out when our spaces are nuclear (recall the definition in Section~\ref{prelim}). The Grothendieck-Pietsch theorem
\cite[Theorem~28.15]{meise1997introduction} is here our main tool:
a Fr\'echet space $E$ with a basis $\{e_{n} \}$ and  an increasing system of seminorms $\|\punkt\|_{k}$ is nuclear if and only if for every $k\in \mathbb{N}$ there is $m\in \mathbb{N}$ such that
\begin{equation} \label{GP}
\sum_{n=1}^{\infty} \|e_{n}\|_{k}\|e_{n}\|_{m}^{-1}<\infty \,.
\end{equation}
For the monomials \ref{as1} gives $\big\Vert e^{-\lambda_{n} s} \Vert_{\mathfrak{X}(\lambda),k} = e^{- \frac{\lambda_{n}}{k}}$, and then \eqref{GP} can be very conveniently reformulated.

\begin{lemma} \label{zimmermann}
Let $\lambda$ be any frequency. Then $L(\lambda)=0$ if and only if for every $k \in \mathbb{N}$ there exists $m>k$ so that
\begin{equation} \label{reger}
\sum_{n=1}^{\infty} e^{- \lambda_{n} (\frac{1}{k} - \frac{1}{m})} < \infty \,.
\end{equation}
\end{lemma}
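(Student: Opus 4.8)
The plan is to recognise the series in \eqref{reger} as the all-ones $\lambda$-Dirichlet series $\sum e^{-\lambda_n s}$ evaluated at the positive real point $s=\tfrac1k-\tfrac1m$, and then to translate everything through Bohr's formula recalled in Section~\ref{diri}, namely $L(\lambda)=\sigma_{c}\big(\sum e^{-\lambda_{n}s}\big)=\limsup_{n}\tfrac{\log n}{\lambda_n}$. The crucial elementary observation is that for a real argument $\delta>0$ the numerical series $\sum_{n=1}^{\infty} e^{-\lambda_n\delta}$ has nonnegative terms, so its finiteness is exactly convergence of the Dirichlet series $\sum e^{-\lambda_n s}$ at the point $s=\delta$. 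Thus the whole statement reduces to comparing the set of positive reals at which the all-ones series converges with the abscissa $\sigma_c=L(\lambda)$. Note also that $L(\lambda)=\limsup_n \tfrac{\log n}{\lambda_n}\ge 0$ always, since $\lambda$ is strictly increasing and unbounded, so eventually $\lambda_n>0$ and each ratio is nonnegative.

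For the forward implication I would argue as follows. Assume $L(\lambda)=0$, so $\sigma_{c}\big(\sum e^{-\lambda_n s}\big)=0$ and the series converges on all of $[\re>0]$, in particular at every real point $\delta>0$. Given $k\in\mathbb{N}$, it then suffices to take $m=k+1$: the exponent $\tfrac1k-\tfrac1{k+1}=\tfrac1{k(k+1)}$ is strictly positive, hence lies in $[\re>0]$, and positivity of the terms gives $\sum_{n=1}^{\infty}e^{-\lambda_n(1/k-1/m)}<\infty$. This establishes \eqref{reger}.

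For the reverse implication I would use the basic half-plane convergence theorem for general Dirichlet series cited at the start of the paper: if a series converges at a point $s_0$, it converges on $[\re>\re s_0]$. Suppose \eqref{reger} holds; fix $k$ and let $m>k$ be the witnessing index, writing $\delta_k:=\tfrac1k-\tfrac1m\in(0,\tfrac1k)$. Finiteness of $\sum e^{-\lambda_n\delta_k}$ means the all-ones series converges at the real point $\delta_k$, whence $\sigma_{c}\big(\sum e^{-\lambda_n s}\big)\le \delta_k<\tfrac1k$. Since $k$ was arbitrary we obtain $\sigma_c\le \inf_k \tfrac1k=0$, and together with $\sigma_c=L(\lambda)\ge 0$ this forces $L(\lambda)=0$. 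The only point requiring genuine care here is the \emph{direction} of the abscissa estimate: convergence at a single real point yields an upper bound on $\sigma_c$ (via the half-plane theorem), not a lower bound, which is exactly what is needed to drive $\sigma_c$ down to $0$ as $k\to\infty$. I expect no serious obstacle beyond this boundary bookkeeping; alternatively one could bypass the abscissa language entirely and prove both directions by the direct Cahen-type estimate $e^{-\lambda_n\delta}\le n^{-\delta/\delta'}$ valid once $\log n<\delta'\lambda_n$, but invoking the already-stated Bohr formula keeps the argument shortest.
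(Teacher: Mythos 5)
Your proof is correct. The reverse implication is essentially identical to the paper's: finiteness of $\sum e^{-\lambda_n(\frac1k-\frac1m)}$ gives convergence of the all-ones series at a real point, hence $\sigma_c\big(\sum e^{-\lambda_n s}\big)\le \frac1k-\frac1m<\frac1k$ for every $k$, and Bohr's identity $L(\lambda)=\sigma_c\big(\sum e^{-\lambda_n s}\big)$ plus $L(\lambda)\ge 0$ finishes. The forward implication is where you diverge from the paper: you again invoke Bohr's identity, deducing from $L(\lambda)=0$ that $\sigma_c=0$, so the all-ones series converges at the positive real point $\frac1k-\frac1{k+1}$, and then you use positivity of the terms to convert convergence of partial sums into finiteness of the sum. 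The paper instead works directly with the $\limsup$ formulation: given $k$ and $m>k$, it sets $\varepsilon=\frac12(\frac1k-\frac1m)$, uses $\limsup_n \frac{\log n}{\lambda_n}=0$ to get $\log n<\varepsilon\lambda_n$ for large $n$, and bounds the tail by $\sum n^{-2}$. The two routes cost about the same; the paper's is more self-contained (it needs only the $\limsup$ expression and an elementary comparison, not the convergence-abscissa identity nor the half-plane convergence theorem), while yours is conceptually cleaner in that both directions pass through the single statement $L(\lambda)=\sigma_c\big(\sum e^{-\lambda_n s}\big)$, at the price of having to note explicitly that for nonnegative terms convergence and finiteness coincide. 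Both identities you use are stated in Section~2.1 of the paper, so your appeal to them is legitimate.
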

\begin{proof}
Let us assume first that $L(\lambda)=0$. Given any $k \in \mathbb{N}$ just pick some $m > k$ and define $\varepsilon = \frac{1}{2} (\frac{1}{k} - \frac{1}{m})$. Since $L(\lambda)=0$ we can find $n_{\varepsilon}$ so that $\frac{\log n}{\lambda_{n}} < \varepsilon$ for every $n\geq n_{\varepsilon}$. Then
\[
\sum_{n \geq n_{\varepsilon}} e^{- \lambda_{n} (\frac{1}{k} - \frac{1}{m})}
= \sum_{n \geq n_{\varepsilon}} e^{- 2 \lambda_{n} \varepsilon}
\leq \sum_{n \geq n_{\varepsilon}} \frac{1}{n^{2}} \,,
\]
which clearly yields \eqref{reger}.
Conversely, given any $k$ take $m>k$ so that \eqref{reger} holds. This implies
\[
L(\lambda) = \sigma_{c} \big( \sum a_{n} e^{-\lambda_{n}s} \big) \leq \frac{1}{k} - \frac{1}{m} < \frac{1}{k} \,.
\]
Since $k$ was arbitrary, this gives $L(\lambda)=0$ and completes the proof.
\end{proof}

With this we can now say quite a bit about when are the spaces nuclear.

\begin{proposition} \label{nuclear2}
Let $\lambda$ be any frequency. Then the following are equivalent.
\begin{enumerate}[(i)]
\item  \label{nuclear22} $L(\lambda) =0$

\item \label{nuclear21} There is a $\lambda$-admissible normed space $X(\lambda)$ such that
 $\mathfrak{X}_{+}(\lambda)$ is a nuclear Fr\'echet space and  the monomials $\{e^{-\lambda_{n} s}\}_{n}$ are a basis.

\item \label{nuclear23} For every  $\lambda$-admissible Banach space  $X(\lambda)$ we have that
 $\mathfrak{X}_{+}(\lambda)$ is a nuclear Fr\'echet space and  the monomials $\{e^{-\lambda_{n} s}\}_{n}$ are a basis.
\end{enumerate}
Moreover, in this case all Fr\'echet spaces $\mathfrak{X}_{+}(\lambda)$ for all possible
$\lambda$-admissible Banach spaces  $\mathfrak{X}(\lambda)$ coincide, and so in particular
$
\mathfrak{X}_{+}(\lambda) =\ell_{p} (A(\lambda)) = c_{0}(A(\lambda))
$
for every $1 \leq p < \infty$, where $A(\lambda)$ is the K\"othe matrix defined in \eqref{rusalka}.
\end{proposition}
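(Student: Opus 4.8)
The plan is to prove the three characterizations are equivalent by the cycle $\ref{nuclear22}\Rightarrow\ref{nuclear23}\Rightarrow\ref{nuclear21}\Rightarrow\ref{nuclear22}$, with the Grothendieck--Pietsch criterion \eqref{GP} and Lemma~\ref{zimmermann} doing the heavy lifting. The key reduction is that for \emph{any} $\lambda$-admissible space, \ref{as1} forces $\big\Vert e^{-\lambda_n s}\big\Vert_{\mathfrak{X}(\lambda),k} = e^{-\lambda_n/k}$, so the monomial norms in every $\mathfrak{X}_+(\lambda)$ are identical and depend only on $\lambda$, never on $\mathfrak{X}(\lambda)$. This is what makes the condition on $\lambda$ alone (namely $L(\lambda)=0$) the right hypothesis.

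First I would prove $\ref{nuclear22}\Rightarrow\ref{nuclear23}$. Assume $L(\lambda)=0$ and let $\mathfrak{X}(\lambda)$ be any $\lambda$-admissible Banach space. By \eqref{passt} we have continuous inclusions $\mathcal{D}_{\ell_1,+}(\lambda)\hookrightarrow\mathfrak{X}_+^0(\lambda)\hookrightarrow\mathfrak{X}_+(\lambda)\hookrightarrow c_0(A(\lambda))$. The plan is to show that under $L(\lambda)=0$ the two extreme spaces $\mathcal{D}_{\ell_1,+}(\lambda)=\ell_1(A(\lambda))$ and $c_0(A(\lambda))$ \emph{coincide} as Fr\'echet spaces: Lemma~\ref{zimmermann} says that for each $k$ there is $m>k$ with $\sum_n e^{-\lambda_n(1/k-1/m)}<\infty$, and this summability is exactly the statement that the linking map from the $m$th into the $k$th weighted-$\ell_1$ step factors through an absolutely summing (indeed nuclear) diagonal inclusion into the weighted-$c_0$ step; the standard sandwich argument then squeezes all four spaces in \eqref{passt} together. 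Hence $\mathfrak{X}_+(\lambda)=\mathfrak{X}_+^0(\lambda)=\ell_p(A(\lambda))=c_0(A(\lambda))$, the monomials are a basis (Proposition~\ref{basisA}, since now $\mathfrak{X}(\lambda)=\mathfrak{X}^0(\lambda)$), and nuclearity follows directly from Grothendieck--Pietsch \eqref{GP}: the monomial norms are $e^{-\lambda_n/k}$, so \eqref{GP} reduces verbatim to \eqref{reger}, which holds by Lemma~\ref{zimmermann}.

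Next $\ref{nuclear23}\Rightarrow\ref{nuclear21}$ is trivial, since there exist $\lambda$-admissible Banach spaces (e.g. $\mathcal{H}_2(\lambda)$). Finally, for $\ref{nuclear21}\Rightarrow\ref{nuclear22}$ I would argue contrapositively, or directly: suppose some $\mathfrak{X}_+(\lambda)$ is nuclear with the monomials forming a basis. Then Grothendieck--Pietsch \eqref{GP} applies, and again using $\big\Vert e^{-\lambda_n s}\big\Vert_{\mathfrak{X}(\lambda),k}=e^{-\lambda_n/k}$, condition \eqref{GP} becomes precisely \eqref{reger}: for every $k$ there is $m>k$ with $\sum_n e^{-\lambda_n(1/k-1/m)}<\infty$. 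By the converse direction of Lemma~\ref{zimmermann} this forces $L(\lambda)=0$. The ``moreover'' claim is then just the coincidence established in the $\ref{nuclear22}\Rightarrow\ref{nuclear23}$ step.

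The main obstacle I anticipate is the $\ref{nuclear22}\Rightarrow\ref{nuclear23}$ implication, specifically verifying that \emph{every} admissible Banach $\mathfrak{X}_+(\lambda)$ collapses onto $c_0(A(\lambda))$ rather than just being trapped between $\ell_1(A(\lambda))$ and $c_0(A(\lambda))$. The subtlety is that nuclearity of the squeezing inclusions $\ell_1(A(\lambda))\hookrightarrow c_0(A(\lambda))$ guarantees these two are the \emph{same} Fr\'echet space, but one must confirm the intermediate $\mathfrak{X}_+(\lambda)$ inherits both the basis property and the correct topology — this is where Proposition~\ref{basisA} (equivalence of $\mathfrak{X}(\lambda)=\mathfrak{X}^0(\lambda)$ with the monomials being a basis) is essential, since without knowing $\mathfrak{X}(\lambda)=\mathfrak{X}^0(\lambda)$ a priori one cannot immediately invoke \eqref{GP}, which requires a basis. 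The clean way around this is to observe that the sandwich in \eqref{passt} forces the topology of $\mathfrak{X}_+(\lambda)$ to agree with that of $c_0(A(\lambda))$, in which the monomials manifestly form a basis, so the basis property transfers.
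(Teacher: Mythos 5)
Your proposal is correct and takes essentially the same route as the paper's own proof: the implication \ref{nuclear21}$\Rightarrow$\ref{nuclear22} via the Grothendieck--Pietsch criterion \eqref{GP} together with Lemma~\ref{zimmermann}, and \ref{nuclear22}$\Rightarrow$\ref{nuclear23} via the sandwich \eqref{passt} combined with the coincidence $\ell_1(A(\lambda))=c_0(A(\lambda))$ for K\"othe spaces whose matrix satisfies \eqref{reger}. Your closing observation --- that the collapse of the sandwich transfers both the topology and the monomial basis from $c_0(A(\lambda))$ to $\mathfrak{X}_{+}(\lambda)$, rather than invoking Proposition~\ref{basisA} a priori --- is exactly the (implicit) final step of the paper's argument.
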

\begin{proof}
As  a straightforward consequence of the Grothedieck-Pietsch theorem \eqref{GP} and Lemma~\ref{zimmermann} we have that \ref{nuclear21} implies \ref{nuclear22}. Suppose now that
$L(\lambda)=0$ and choose any $\lambda$-admissible Banach space  $\mathfrak{X}(\lambda)$. From \eqref{passt} we have
\[
\ell_{1} (A) = \mathcal{D}_{1,+} (\lambda) \hookrightarrow \mathfrak{X}_{+}^{0}(\lambda)
\hookrightarrow \mathfrak{X}_{+}(\lambda) \hookrightarrow c_{0}(A(\lambda)) \,.
\]
Then Lemma~\ref{zimmermann} and \cite[Theorem~28.16]{meise1997introduction} (a consequence of the Grothendieck-Pietsch that characterises nuclearity in K\"othe spaces) imply that $\ell_{1} (A(\lambda)) =c_{0}(A(\lambda))$, the canonical vectors $e_{n} = (\delta_{n,j})_{j}$ are a basis and the spaces are nuclear. But, then,
\[
\ell_{1} (A(\lambda)) = \mathfrak{X}_{+}(\lambda) = c_{0}(A(\lambda)) \,,
\]
and the conclusion follows. Since the remaining implication is obvious, the proof is completed.
\end{proof}

\section{Fr\'echet space protagonists}  \label{frechet}

We now apply the abstract approach devised in the previous section to some concrete spaces of general Dirichlet series. This yields our main results.

\subsection{Bounded Dirichlet series}  \label{structure}

In the introduction we already defined $\mathcal{D}_{\infty,+} (\lambda)$ as  the space of all $\lambda$-Dirichlet series $\sum a_{n}e^{-\lambda_{n}s}$ that on $[\re >0]$ converge to
(a  necessarily holomorphic) function which is bounded  on all smaller planes $[\re > \sigma]$. Looking at \eqref{fx}, this is precisely the pre-Fr\'echet space generated by the $\lambda$-admissible space
$\mathcal{D}_\infty(\lambda)$. Obviously $\mathcal{D}_{\infty}(\lambda)$ is a linear subspace of $\mathcal{D}_{\infty,+} (\lambda)$. To see that that both spaces in general are different, note that
the ordinary Dirichlet series $\sum (-1)^{n} n^{-s}=(1-2^{-s})\zeta(s)$ generated by the Riemann zeta-series $\zeta = \sum n^{-s}$,   belongs to  $\mathcal{D}_{\infty,+}((\log n))$, but not to
$\mathcal{D}_\infty((\log n))$.\\
Note first that by~\eqref{boundednesscoeff} for every $k$ we have $\sup_{n} |a_{n}e^{- \frac{\lambda_{n}}{k}}| \leq \big\| \sum a_{n} e^{-\lambda_{n} s} \big\|_{k}$. This
in particular shows that the coordinate functionals $\sum a_{n} e^{-\lambda_{n} s} \mapsto a_{N}$
 are equicontinuous on $\mathcal{D}_{\infty,+} (\lambda)$.
Let us note that in this case the space defined in \eqref{hammerklavier} is exactly
\begin{equation*}
\mathcal{D}_{\infty,k} (\lambda) := \big\{ \sum a_{n} e^{- \lambda_{n} s} \in \mathfrak{D}(\lambda)
\colon \sum a_{n} e^{- \frac{ \lambda_{n}}{k}} e^{- \lambda_{n} s} \in \mathcal{D}_{\infty} (\lambda)\big\} \,,
\end{equation*}
endowed  with the norm
\[
\big\Vert \sum a_{n} e^{- \lambda_{n}s} \big\Vert_{\mathcal{D}_{\infty}(\lambda),k}
: = \big\Vert \sum a_{n} e^{- \frac{ \lambda_{n}}{k}}  e^{- \lambda_{n}s} \big\Vert_{\mathcal{D}_{\infty} (\lambda)}\,.
\]
Proposition~\ref{projlimitX} provides us with two different representations of the space $\mathcal{D}_{\infty,+} (\lambda)$ as a projective limit (recall the definitions in \eqref{spec1} and \eqref{spec2}).

\begin{proposition} \label{projlimit}
Let $\lambda$ be a frequency. Then  $\mathcal{D}_{\infty,+}(\lambda)$ is a pre-Fr\'echet Schwartz space which admits the following representations as projective limit
\[
\mathcal{D}_{\infty,+}(\lambda) =  \proj(\mathcal{D}_{\infty,k}(\lambda), i_k) %
=  \proj(\mathcal{D}_{\infty}(\lambda), \tau_k) \,.
\]
\end{proposition}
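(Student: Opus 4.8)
The plan is to obtain everything except the Schwartz property for free from the abstract machinery, and then to reduce the Schwartz property to the compactness of a single translation operator on the Banach space $\mathcal{H}_\infty(\lambda)$.

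First I would recall from Example~\ref{restes}(c) that $\mathcal{D}_\infty(\lambda)$ is a $\lambda$-admissible normed space. Hence Proposition~\ref{projlimitX} applies verbatim with $\mathfrak{X}(\lambda)=\mathcal{D}_\infty(\lambda)$ and yields at once that $\mathcal{D}_{\infty,+}(\lambda)=\mathcal{D}_\infty(\lambda)_+$ is a pre-Fr\'echet space together with the two announced projective limit representations
\[
\mathcal{D}_{\infty,+}(\lambda) = \proj(\mathcal{D}_{\infty,k}(\lambda), i_k) = \proj(\mathcal{D}_\infty(\lambda), \tau_k)\,.
\]
Since $\mathcal{D}_\infty(\lambda)$ is in general not complete, this only gives pre-Fr\'echet and not Fr\'echet, and so the sole remaining content of the statement is that this space is Schwartz.

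To prove the Schwartz property I would work with the second representation $\proj(\mathcal{D}_\infty(\lambda),\tau_k)$ and invoke Remark~\ref{proj1}: it suffices to check that for each $k$ there is some $m>k$ for which the composite linking map $\mathcal{D}_\infty(\lambda)\to\mathcal{D}_\infty(\lambda)$ is precompact. As each $\tau_k$ is the translation $D\mapsto D_{1/k-1/(k+1)}$ and translations compose additively, this composite equals $\tau_{1/k-1/m}$, translation by $1/k-1/m>0$. Thus the problem collapses to showing that for every $\sigma>0$ the translation operator $\tau_\sigma:\mathcal{D}_\infty(\lambda)\to\mathcal{D}_\infty(\lambda)$ is precompact. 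For this I would use the isometric identifications $\mathcal{D}_\infty(\lambda)\hookrightarrow\HRe=\mathcal{H}_\infty(\lambda)$ (from the discussion preceding Theorem~\ref{equivalence} and from Theorem~\ref{equivalence2}), under which $\mathcal{D}_\infty(\lambda)$ is a generally non-closed subspace of the Banach space $\mathcal{H}_\infty(\lambda)$ and $\tau_\sigma$ on $\mathcal{D}_\infty(\lambda)$ is the restriction of $\tau_\sigma$ on $\mathcal{H}_\infty(\lambda)$. Writing $B$ for the unit ball of $\mathcal{D}_\infty(\lambda)$, isometry gives $B\subseteq B_{\mathcal{H}_\infty(\lambda)}$, hence $\tau_\sigma(B)\subseteq\tau_\sigma(B_{\mathcal{H}_\infty(\lambda)})$, and the latter is relatively compact in $\mathcal{H}_\infty(\lambda)$ by Corollary~\ref{uhu}. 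A subset of a totally bounded set is totally bounded, and since $\tau_\sigma$ maps $\mathcal{D}_\infty(\lambda)$ into itself (by \ref{as3}) while the norms agree, $\tau_\sigma(B)$ is totally bounded, i.e. precompact, in $\mathcal{D}_\infty(\lambda)$. This is exactly precompactness of $\tau_\sigma$ and completes the argument.

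The main obstacle, and the reason the Schwartz property is not merely a corollary of Proposition~\ref{projlimitX}, is that $\mathcal{D}_\infty(\lambda)$ need not be complete, so one cannot directly quote a Banach-space compactness theorem for it. The resolution I would stress is that precompactness asks only for total boundedness of the image (equivalently, for Cauchy subsequences), a property inherited from the ambient Banach space $\mathcal{H}_\infty(\lambda)$ through the isometric embedding, with no need for the limits to lie in $\mathcal{D}_\infty(\lambda)$. As an alternative to Corollary~\ref{uhu} one can produce the same total boundedness directly from the Montel theorem for $\HRe$ (Theorem~\ref{teo montel}): a bounded sequence of limit functions $(f_N)\subset\HRe$ admits a subsequence with $f_{N_k}(\mu+\punkt)\to f(\mu+\punkt)$ in $\HRe$ for all $\mu>0$, whence $\|\tau_\sigma D_{N_k}-\tau_\sigma D_{N_j}\|_{\mathcal{D}_\infty(\lambda)}=\sup_{\re w>\sigma}|f_{N_k}(w)-f_{N_j}(w)|\to 0$, exhibiting the required Cauchy subsequence.
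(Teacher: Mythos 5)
Your proposal is correct and follows essentially the same route as the paper: both obtain the pre-Fr\'echet structure and the two projective-limit representations directly from Proposition~\ref{projlimitX}, and both deduce the Schwartz property from Corollary~\ref{uhu} together with the isometric embedding of $\mathcal{D}_{\infty}(\lambda)$ into $\mathcal{H}_{\infty}(\lambda)$, applied to the linking maps of $\proj(\mathcal{D}_{\infty}(\lambda),\tau_k)$. Your explicit attention to the precompact-versus-compact distinction (needed because $\mathcal{D}_{\infty}(\lambda)$ is in general not complete) is a careful touch that the paper's own wording glosses over, but it is a refinement of the same argument rather than a different one.
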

\begin{proof}
The projective descriptions are immediate from Proposition~\ref{projlimitX}. In order to see that $\mathcal{D}_{\infty,+}(\lambda)$ is Schwartz let us note that, by Corollary~\ref{uhu}, the
mappings $\tau_{k} : \mathcal{H}_{\infty}(\lambda) \to \mathcal{H}_{\infty}(\lambda)$ are compact for every $k$. Since  by Theorem~\ref{equivalence2} and \cite[Corollary~3.9]{schoolmann2018bohr} we know that
$\mathcal{D}_{\infty}(\lambda)$ is an isometric subspace of $\mathcal{H}_{\infty}(\lambda)$, we immediately deduce all mappings $\tau_{k} : \mathcal{D}_{\infty}(\lambda) \to \mathcal{D}_{\infty}(\lambda)$ are
compact and, then, $\proj(\mathcal{D}_{\infty}(\lambda), \tau_k)$ is Schwartz.
\end{proof}

We illustrate all this with  an  interesting example. Let $\lambda$ be a $\mathbb{Q}$-linearly independent frequency, and consider the K\"othe matrix $A(\lambda)$ defined in \eqref{rusalka}.
From \cite[Theorem~4.7]{schoolmann2018bohr} we know that
\[
\mathcal{D}_{\infty,k}(\lambda) \to \ell_1\big((e^{-\frac{\lambda_n}{k}})_{n}\big)\, \text{ given by } \sum a_n n^{-s} \mapsto (a_n)
\]
is an isometric isomorphism for every $k$. This immediately gives (see \eqref{Koethe})  that by making the indentification $ \sum a_n n^{-s} \mapsto (a_n)$ we have
\[
\mathcal{D}_{\infty,+} (\lambda)  = \ell_{1}(A (\lambda))
\]
as Fr\'echet spaces. \\

Our aim in the following sections is to study the structure of $\mathcal{D}_{\infty,+} (\lambda)$, which in the end will yield a sort of analogue of  Theorem~\ref{equivalence} (the 'equivalence theorem') for Fr\'echet spaces of general Dirichlet series.

\subsubsection{Completeness}

Let us recall that the proof of Theorem~\ref{equivalence} (see \cite[Lemma~5.2]{defant2020variants}) requires an application of the uniform boundedness
principle. Barrelled spaces is the biggest class of spaces on which the uniform boundedness principle holds. So, when moving to the framework of locally convex spaces, barrelledness appears as a natural
property in our setting. Our next result shows that this is indeed the case, and that it gives another equivalent reformulation of Bohr's theorem for $\lambda$. Note that this property is in some sense
hidden in the Banach case, since a normed space is barreled if and only if it is complete.

\begin{theorem} \label{equi-1}
For every frequency $\lambda$ the following statements are equivalent
\begin{enumerate}[(i)]
\item \label{equi-11} Bohr's theorem holds for $\lambda$.
\item \label{equi-12} $\mathcal{D}_{\infty,+}(\lambda)$ is a Fr\'{e}chet space.
\item \label{equi-15} $\mathcal{D}_{\infty,+}(\lambda)$ is barreled.
\end{enumerate}
\end{theorem}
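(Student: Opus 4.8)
The plan is to prove the cycle (i) $\Rightarrow$ (ii) $\Rightarrow$ (iii) $\Rightarrow$ (i), where the first two implications are soft and the last one carries all the weight. For (i) $\Rightarrow$ (ii): if Bohr's theorem holds for $\lambda$, then by Theorem~\ref{equivalence} the admissible space $\mathcal{D}_{\infty}(\lambda)$ is a Banach space, so taking $\mathfrak{X}(\lambda)=\mathcal{D}_{\infty}(\lambda)$ in Proposition~\ref{projlimitX} the generated space $\mathcal{D}_{\infty,+}(\lambda)=\mathfrak{X}_+(\lambda)$ is complete, i.e. Fr\'echet. For (ii) $\Rightarrow$ (iii) I simply invoke the standard fact recalled in Section~\ref{prelim} that every Fr\'echet space is barreled.

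The heart of the matter is (iii) $\Rightarrow$ (i), which I would prove by contraposition: assuming Bohr's theorem fails, I exhibit a family of continuous functionals on $\mathcal{D}_{\infty,+}(\lambda)$ that is pointwise bounded but not equicontinuous, so that the uniform boundedness principle, equivalently barrelledness (Section~\ref{prelim}), must fail. First, by Theorem~\ref{equivalence} the failure of Bohr's theorem gives $\mathcal{D}_{\infty}(\lambda)\subsetneq\mathcal{H}_{\infty}(\lambda)$, so by Theorem~\ref{equivalence2} there is $g\in\HRe$ whose Dirichlet series $D_g=\sum a_{\lambda_n}(g)e^{-\lambda_n s}$ does not belong to $\mathcal{D}_{\infty}(\lambda)$. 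Since $g$ is already bounded on $[\re>0]$, the only possible obstruction is convergence, whence $\delta:=\sigma_c(D_g)>0$; moreover, for each fixed real $\sigma$ with $0<\sigma<\delta$ the partial sums $\sum_{n\le N}a_{\lambda_n}(g)e^{-\lambda_n\sigma}$ are unbounded in $N$, for otherwise Abel summation would force convergence of $D_g$ on $[\re>\sigma]$, contradicting $\sigma<\sigma_c(D_g)$.

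With such $\sigma$ fixed, I would consider the partial-sum functionals $\ell_N\big(\sum a_n e^{-\lambda_n s}\big)=\sum_{n\le N}a_n e^{-\lambda_n\sigma}$ on $\mathcal{D}_{\infty,+}(\lambda)$, which are continuous since the coefficient functionals are (by \eqref{boundednesscoeff}). They are pointwise bounded: every $D\in\mathcal{D}_{\infty,+}(\lambda)$ satisfies $\sigma_c(D)\le 0<\sigma$, so $D$ converges at $s=\sigma$ and thus $\sup_N|\ell_N(D)|<\infty$. To see that they are not equicontinuous, I would test them on the Riesz means $P_x:=R_x^{\lambda}(g)$: these are $\lambda$-polynomials, hence lie in $\mathcal{D}_{\infty,+}(\lambda)$, and by Lemma~\ref{lemma0.1}--\ref{lemma0.1-2} they satisfy $\|P_x\|_{\mathcal{D}_{\infty}(\lambda),k}\le\|g\|_{\infty}$ simultaneously for all $k$ and all $x$. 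If $(\ell_N)_N$ were equicontinuous, there would be $k$ and $C$ with $|\ell_N(D)|\le C\|D\|_{\mathcal{D}_{\infty}(\lambda),k}$ for all $N$ and $D$; evaluating at $D=P_x$ and letting $x\to\infty$, so that $\ell_N(P_x)\to\sum_{n\le N}a_{\lambda_n}(g)e^{-\lambda_n\sigma}$ because the Riesz factors tend to $1$, would yield $\sup_N\big|\sum_{n\le N}a_{\lambda_n}(g)e^{-\lambda_n\sigma}\big|\le C\|g\|_{\infty}<\infty$, contradicting the unboundedness above. Hence $\mathcal{D}_{\infty,+}(\lambda)$ is not barreled.

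I expect the main obstacle to be the choice of this witnessing family. The naive candidates fail: Riesz means are automatically equicontinuous by Lemma~\ref{lemma0.1}--\ref{lemma0.1-2}, while partial sums measured in the full $\mathcal{H}_{\infty}$-norm are not even pointwise bounded on $\mathcal{D}_{\infty,+}(\lambda)$. The decisive idea is to evaluate the partial sums at a single real abscissa $\sigma$ strictly between $0$ and $\sigma_c(D_g)$: pointwise boundedness then comes for free from $\sigma_c(D)\le 0$ on the whole space, whereas non-equicontinuity is forced by pitting the uniform seminorm control of the Riesz means of $g$ against the divergence of its partial sums below its abscissa of convergence.
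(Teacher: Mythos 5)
Your proof is correct in outline, and your two soft implications are exactly the paper's: (i) $\Rightarrow$ (ii) via the projective description and completeness of $\mathcal{D}_{\infty}(\lambda)$ (Theorem~\ref{equivalence}), and (ii) $\Rightarrow$ (iii) because Fr\'echet spaces are barreled. For the hard implication your route is genuinely different in organization, though it runs on the same fuel. The paper proves (iii) $\Rightarrow$ (i) \emph{directly}: barrelledness makes the partial-sum functionals $T_N(D)=\sum_{n\leq N}a_ne^{-\lambda_n/k}$ equicontinuous, which, applied to translated polynomials, yields the inequality \eqref{point}, and then Bohr's theorem follows from Remark~\ref{dufay} (Riesz means together with Proposition~\ref{Prop:BasicEstimationsAbscissae}). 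You argue by \emph{contraposition}: the failure of Bohr's theorem is converted, via Theorem~\ref{equivalence} and Theorem~\ref{equivalence2}, into a witness $g\in\HRe$ whose Dirichlet series $D_g$ has $\sigma_c(D_g)>0$, and you then pit the uniform bound $\|R_x^{\lambda}(g)\|_\infty\leq\|g\|_\infty$ of Lemma~\ref{lemma0.1}--\ref{lemma0.1-2} against the unbounded partial sums of $D_g$ at a fixed $\sigma\in(0,\sigma_c(D_g))$ to destroy equicontinuity of the same kind of partial-sum functionals. Both proofs exploit the identical tension (sup-norm control of Riesz means versus divergence of partial sums); the paper's version buys a quantitative inequality valid for all polynomials and avoids invoking the equivalence $\mathcal{D}_\infty(\lambda)=\mathcal{H}_\infty(\lambda)$, while yours buys an explicit pointwise-bounded, non-equicontinuous family, at the cost of leaning on the full strength of Theorem~\ref{equivalence}.

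One step in your argument needs more care: from $D_g\notin\mathcal{D}_{\infty}(\lambda)$ you conclude $\sigma_c(D_g)>0$ on the grounds that ``the only possible obstruction is convergence.'' This tacitly assumes that if $D_g$ converged on all of $[\re>0]$, its limit function would be the bounded function $g$; a priori the series could converge there to some \emph{other}, possibly unbounded, holomorphic function, which would also put $D_g$ outside $\mathcal{D}_{\infty}(\lambda)$. The assumption is true, but it requires an argument: by the classical consistency of first-order Riesz summation with convergence (Hardy--Riesz), at every point $s$ of convergence one has
\begin{equation*}
\lim_{N\to\infty}\sum_{n=1}^{N}a_{\lambda_n}(g)e^{-\lambda_n s}
=\lim_{x\to\infty}R_x^{\lambda}(D_g)(s)
=\lim_{x\to\infty}R_x^{\lambda}(g)(s)=g(s)\,,
\end{equation*}
the last equality by Lemma~\ref{lemma0.1}--\ref{lemma0.1-2}. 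With this line inserted, the identification of the limit with $g$ is justified, $\sigma_c(D_g)>0$ follows, and the rest of your argument (Abel summation for the unboundedness of partial sums, continuity and pointwise boundedness of the $\ell_N$, and the limiting evaluation $\ell_N(R_x^{\lambda}(g))\to\sum_{n\leq N}a_{\lambda_n}(g)e^{-\lambda_n\sigma}$) is sound.
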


\begin{remark}\label{dufay}
Before we proceed to the proof of the theorem let us point out that, if  for every $\sigma>0$ there is a constant $C=C(\sigma)$ such that
for every choice of finitely many  $a_1, \ldots, a_M \in \mathbb{C}$, we have
\begin{equation}\label{point}
\Big\|\sum_{n=1}^{N} a_{n}e^{-\sigma \lambda_{n}}e^{-\lambda_{n}s}\Big\|_{\mathcal{D}_{\infty}(\lambda)}
\le C \Big\|\sum_{n=1}^{M} a_{n}e^{-\lambda_{n}s}\Big\|_{\mathcal{D}_{\infty}(\lambda)} \,,
\end{equation}
for all $N \leq M$, then (following the argument in \cite[Theorem~4.12]{CaDeMaSc_VV}) Bohr's theorem holds for $\lambda$. Indeed, if \eqref{point} holds, we may take $D = \sum a_{n} e^{-\lambda_{n} s} \in \mathcal{D}_{\infty}^{\ext}(\lambda)$ and fix $N$. Then (see e.g. \cite[Proposition~3.4]{schoolmann2018bohr})
\[
\Big\|\sum_{n=1}^{N} a_{n}(1-\frac{\lambda_{n}}{x})e^{-\sigma \lambda_{n}}e^{-\lambda_{n}s}\Big\|_{\mathcal{D}_{\infty}(\lambda)}
\le C \|R_{x}^{\lambda}(D)\|_{\infty}
\le C_{1} \big\| \sum a_{n} e^{-\lambda_{n} s} \big\|_{\mathcal{D}_{\infty}(\lambda)},
\]
for every $x>N$. Now, letting $x\to \infty$ we get
\[
\Big\|\sum_{n=1}^{N} a_{n}e^{-\sigma \lambda_{n}}e^{-\lambda_{n}s}\Big\|_{\mathcal{D}_{\infty}(\lambda)}
\le C_{1}(\sigma)\big\| \sum a_{n} e^{-\lambda_{n} s} \big\|_{\mathcal{D}_{\infty}(\lambda)},
\]
which implies $\sigma_{u}(D)\le 0$ (use Proposition~\ref{Prop:BasicEstimationsAbscissae} with e.g. $\mathfrak{X}(\lambda)=\mathcal{D}_{\infty} (\lambda)$, see also Example \ref{exA}--\ref{exA4}); i.e. Bohr's theorem holds for $\lambda$.
\end{remark}

\begin{proof}[Proof of Theorem~\ref{equi-1}]\text{}

\noindent \ref{equi-11} $\Rightarrow$ \ref{equi-12} By Proposition~\ref{projlimit} we know that
$\mathcal{D}_{\infty,+}(\lambda) =  \proj(\mathcal{D}_{\infty}(\lambda), \tau_k)$ as pre-Fr\'echet spaces. From Theorem~\ref{equivalence} we know that $\mathcal{D}_{\infty} (\lambda)$ is complete,
so that  the latter projective limit is complete. The conclusion then follows.\\

\noindent  \ref{equi-12} $\Rightarrow$  \ref{equi-15} This follows from the general fact that every Fr\'echet space is barreled.\\

\noindent  \ref{equi-15} $\Rightarrow$  \ref{equi-11} As we have already shown, ir suffices to check that \eqref{point} holds. To do that, for each fixed $k$ we consider the family of operators $T_N: \mathcal{D}_{\infty,+}(\lambda) \to \mathbb{C}$ given by
\[
\sum a_{n} e^{-\lambda_{n} s} \mapsto \sum_{n=1}^N a_n e^{-\frac{\lambda_n}{k}}
\]
for $N \in \mathbb{N}$. Then $\{ T_{N}  \}_{N}$ is a  bounded set in the topological dual of $\mathcal{D}_{\infty,+}(\lambda)$, which (since the space is barrelled) is then equicontinuous.
In other terms, there is a constant $C = C(k) >0$ and $\ell > k$ such that for all $\sum a_{n} e^{-\lambda_{n}s} \in \mathcal{D}_{\infty,+}(\lambda)$ we have
\[
\sup_N \Big| \sum_{k=1}^N a_n e^{-\frac{\lambda_n}{k}}  \Big| \leq
C \big\| \sum a_{n} e^{-\lambda_{n}s}  \big\|_{\mathcal{D}_{\infty}(\lambda),\ell}\,.
\]
Finally, given $a_1, \ldots, a_M \in \mathbb{C}$  and $\re z >0$, we apply this to the series
$\sum_{n=1}^M a_n e^{-\lambda_n z} e^{-\lambda_n s}$, to have
\[
\sup_{\re z >0}  \sup_N \Big| \sum_{n=1}^N a_n e^{-\lambda_n z} e^{-\frac{\lambda_n}{k}}   \Big|
\leq C \Big| \sum_{n=1}^N a_ne^{-\frac{\lambda_n}{\ell}} e^{-\lambda_n z}  \Big|\,,
\]
and this gives ~\eqref{point}.
\end{proof}

\subsubsection{Montel}

The appearance of Montel's theorem for $\mathcal{D}_{\infty}(\lambda)$ in Theorem~\ref{equivalence} leads to another well known class of locally convex spaces: Montel spaces.
\begin{theorem} \label{equi-1a}
For every frequency $\lambda$ the following statements are equivalent
\begin{enumerate}[(i)]
\item \label{equi-11A} Bohr's theorem holds for $\lambda$.
\item \label{equi-12B} $\mathcal{D}_{\infty,+}(\lambda)$ is a Montel space.
\end{enumerate}
\end{theorem}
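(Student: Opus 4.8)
The plan is to derive this statement almost entirely as a corollary of what has already been established, combining the Schwartz property from Proposition~\ref{projlimit}, the characterization of Bohr's theorem through completeness and barrelledness in Theorem~\ref{equi-1}, and two standard facts recalled in Section~\ref{prelim}: that Fr\'echet-Schwartz spaces are Montel, and that every Montel space is (by definition) barrelled. No new analytic input should be needed, since the heavy lifting already resides in the Montel-type theorem (Theorem~\ref{teo montel3}) feeding into Corollary~\ref{uhu}, and in the equivalence theorem packaged as Theorem~\ref{equi-1}.

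For the implication \ref{equi-11A}~$\Rightarrow$~\ref{equi-12B}, I would argue as follows. Assuming that Bohr's theorem holds for $\lambda$, Theorem~\ref{equi-1} yields that $\mathcal{D}_{\infty,+}(\lambda)$ is a Fr\'echet space (i.e.\ it is complete). On the other hand, Proposition~\ref{projlimit} guarantees that $\mathcal{D}_{\infty,+}(\lambda)$ is always Schwartz, independently of any assumption on $\lambda$. Putting these together, under Bohr's theorem the space is a Fr\'echet-Schwartz space, and as recalled in Section~\ref{prelim} every Fr\'echet-Schwartz space is Montel. This gives \ref{equi-12B}.

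For the reverse implication \ref{equi-12B}~$\Rightarrow$~\ref{equi-11A}, I would simply invoke the definition of a Montel space: being Montel means being barrelled and semi-Montel, so in particular $\mathcal{D}_{\infty,+}(\lambda)$ is barrelled. Then Theorem~\ref{equi-1} (the implication \ref{equi-15}~$\Rightarrow$~\ref{equi-11}) immediately delivers that Bohr's theorem holds for $\lambda$.

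There is no genuine obstacle remaining at this stage; the only point that deserves a word of care is \emph{why completeness cannot be dispensed with} in the forward direction. Although $\mathcal{D}_{\infty,+}(\lambda)$ is Schwartz for every frequency, the Schwartz property only forces bounded sets to be precompact, and precompactness upgrades to relative compactness (hence to semi-Montel, hence to Montel) precisely when the space is complete. This is exactly the ingredient that Theorem~\ref{equi-1} supplies under Bohr's theorem, and it is also the reason why, for frequencies failing Bohr's theorem, the pre-Fr\'echet Schwartz space $\mathcal{D}_{\infty,+}(\lambda)$ cannot be Montel: it would otherwise be barrelled, contradicting Theorem~\ref{equi-1}.
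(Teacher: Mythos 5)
Your proof is correct and follows essentially the same route as the paper: the forward direction combines Theorem~\ref{equi-1} (completeness under Bohr's theorem) with the Schwartz property from Proposition~\ref{projlimit} to conclude Fr\'echet-Schwartz, hence Montel, and the reverse direction uses that Montel spaces are barrelled together with the implication \emph{barrelled $\Rightarrow$ Bohr} from Theorem~\ref{equi-1}. Your closing remark on why completeness is the precise ingredient that upgrades precompactness of bounded sets to the semi-Montel property is accurate, though it is commentary beyond what the proof requires.
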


\begin{proof}
If \ref{equi-11A} holds, then  $\mathcal{D}_{\infty,+}(\lambda)$ by Proposition~\ref{projlimit} and Theorem~\ref{equi-1} is a Fr\'echet-Schwartz space, and these are always Montel spaces. Conversely,
since Montel spaces by their definition are barreled, Theorem~\ref{equi-1} also proves that \ref{equi-12B} implies \ref{equi-11A}.
\end{proof}

\subsubsection{Bases}

We know that the sequence of monomials   $\{ e^{-\lambda_{n}s} \}_{n}$ forms a basis of $\mathcal{D}_{\infty,+}((\log n))$ (see \cite[Theorem~2.2]{bonet2018frechet}) and for
$\mathcal{D}_{\infty,+}((n))$ (in this case $e^{-ns}$ corresponds to the monomial
$z^{n}$, and the result is classical). The following equivalence extends these.

\begin{theorem} \label{equi-1b}
For every frequency $\lambda$ the following statements are equivalent:
\begin{enumerate}[(i)]
\item \label{equi-11B} Bohr's theorem holds for $\lambda$.
\item \label{equi-16}
For every $k$ there are $\ell > k$ and $C>0$ such that for each $\sum a_n e^{-\lambda_n s} \in \mathcal{D}_{\infty, +}(\lambda)$ we have
\[
\sup_N \Big\| \sum_{n=1}^N a_n e^{-\lambda_n s}  \Big\|_{\mathcal{D}_{\infty} (\lambda),k} \leq C \big\| \sum a_n e^{-\lambda_n s} \big\|_{\mathcal{D}_{\infty} (\lambda),\ell}\,.
\]
\end{enumerate}
Moreover, in this case the monomials $\{ e^{-\lambda_{n}s} \}_n$ form a basis of $\mathcal{D}_{\infty,+}(\lambda)$.
\end{theorem}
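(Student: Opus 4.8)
The plan is to establish the two implications \ref{equi-11B} $\Rightarrow$ \ref{equi-16} and \ref{equi-16} $\Rightarrow$ \ref{equi-11B}, and to harvest the final ``Moreover'' assertion as a by-product of the forward direction. The organising principle is the observation that \ref{equi-16} is exactly the basis inequality \eqref{basisineq} for the sequence of monomials, written for the seminorms $\|\punkt\|_{\mathcal{D}_{\infty}(\lambda),k}$ and comparing a partial sum against the whole series. Thus the forward implication should follow from the general theory of Schauder bases in barreled spaces, while the converse should be fed back into Remark~\ref{dufay}.

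For \ref{equi-11B} $\Rightarrow$ \ref{equi-16}, I would first record that Bohr's theorem forces $\mathcal{D}_{\infty}(\lambda)=\mathcal{D}_{\infty}^{0}(\lambda)$ (Example~\ref{restes}), so that Proposition~\ref{basisA} makes $\{e^{-\lambda_n s}\}_n$ a basis of $\mathcal{D}_{\infty,+}(\lambda)$; this already settles the ``Moreover'' claim once and for all. Next, Theorem~\ref{equi-1} guarantees that $\mathcal{D}_{\infty,+}(\lambda)$ is Fr\'echet, hence barreled, so the basis inequality \eqref{basisineq} applies: for each $k$ there are $\ell>k$ and $C>0$ with $\big\|\sum_{n=1}^{N}a_n e^{-\lambda_n s}\big\|_{\mathcal{D}_{\infty}(\lambda),k}\le C\big\|\sum_{n=1}^{M}a_n e^{-\lambda_n s}\big\|_{\mathcal{D}_{\infty}(\lambda),\ell}$ for all $M\ge N$. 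Finally, because the monomials are a basis, the partial sums of any $\sum a_n e^{-\lambda_n s}\in\mathcal{D}_{\infty,+}(\lambda)$ converge to it in the $\|\punkt\|_{\mathcal{D}_{\infty}(\lambda),\ell}$-seminorm, so letting $M\to\infty$ and then taking $\sup_N$ yields precisely \ref{equi-16}.

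For the converse \ref{equi-16} $\Rightarrow$ \ref{equi-11B}, I would reduce to verifying \eqref{point} for every $\sigma>0$ and then invoke Remark~\ref{dufay}. The key manoeuvre is a rescaling: given $\sigma>0$, choose $k$ with $1/k<\sigma$, take $\ell>k$ and $C$ from \ref{equi-16}, and apply the inequality in \ref{equi-16} to the polynomial $\sum_{n=1}^{M}a_n e^{\lambda_n(1/k-\sigma)}e^{-\lambda_n s}$. Unwinding the definition $\|\punkt\|_{\mathcal{D}_{\infty}(\lambda),j}=\|(\punkt)_{1/j}\|_{\mathcal{D}_{\infty}(\lambda)}$, the left-hand side collapses to $\sup_N\big\|\sum_{n=1}^{N}a_n e^{-\sigma\lambda_n}e^{-\lambda_n s}\big\|_{\mathcal{D}_{\infty}(\lambda)}$, while the right-hand side becomes $C\big\|\sum_{n=1}^{M}a_n e^{-(\sigma-\sigma_0)\lambda_n}e^{-\lambda_n s}\big\|_{\mathcal{D}_{\infty}(\lambda)}$ with $\sigma_0=1/k-1/\ell\in(0,\sigma)$. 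Since $\sigma-\sigma_0>0$ and translation by a positive amount is a contraction on $\mathcal{D}_{\infty}(\lambda)$ (Lemma~\ref{lemma0.1}--\ref{lemma0.1-3}), the latter is at most $C\big\|\sum_{n=1}^{M}a_n e^{-\lambda_n s}\big\|_{\mathcal{D}_{\infty}(\lambda)}$, which is \eqref{point}.

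The main obstacle is engineering this rescaling so that \eqref{point} is obtained for \emph{all} $\sigma>0$ rather than for a single exceptional value. The subtlety is that \ref{equi-16} supplies only one $\ell=\ell(k)$ per $k$, so a naive substitution yields \eqref{point} solely at $\sigma_0=1/k-1/\ell$; the remedy is to absorb the surplus $\sigma-\sigma_0$ into a positive translation (which cannot increase the $\mathcal{D}_{\infty}(\lambda)$-norm) and to exploit that $\sigma_0<1/k$ can be driven below any prescribed $\sigma$ by enlarging $k$. Everything else is bookkeeping with the seminorms together with the already-established Theorem~\ref{equi-1} and the basis machinery of Proposition~\ref{basisA}.
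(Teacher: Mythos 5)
Your proposal is correct and follows essentially the same route as the paper: both directions rest on Theorem~\ref{equi-1} (barrelledness/completeness), Proposition~\ref{basisA} together with $\mathcal{D}_{\infty}(\lambda)=\mathcal{D}^{0}_{\infty}(\lambda)$ for the ``Moreover'' claim, and Remark~\ref{dufay} via \eqref{point} for the converse. The only cosmetic difference is in the forward direction, where the paper applies the uniform boundedness principle directly to the partial-sum operators $T_{N}\colon \mathcal{D}_{\infty,+}(\lambda)\to\mathcal{D}_{\infty,k}(\lambda)$ (using Bohr's theorem for their pointwise convergence), whereas you first establish the basis and then invoke the abstract basis inequality \eqref{basisineq}, which packages the same equicontinuity statement; your explicit rescaling argument in the converse simply fills in what the paper dismisses as ``clear.''
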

Provided $\mathcal{D}_{\infty,+}(\lambda)$ is complete (or by Theorem~\ref{equi-1} equivalently Bohr's theorem holds for $\lambda$), observe that statement \ref{equi-16} is an immediate
consequence of \eqref{basisineq}, whenever $\{ e^{-\lambda_{n}s} \}$  forms a basis  (compare also with \cite[Theorem~14.3.6]{jarchow2012locally} or \cite[Lemma~28.10]{meise1997introduction}).
Unfortunately, in general having a basis for a pre-Fr\'echet  space does not necessarily imply the corresponding inequality \eqref{basisineq}, so that it would be interesting to
find a concrete frequency $\lambda$ (not satisfying Bohr's theorem) such that the sequence of monomials $\{e^{-\lambda_{n}s}\}$ forms a basis for $\mathcal{D}_{\infty,+}(\lambda)$,
but for which statement \ref{equi-16} fails.

\begin{proof}
Suppose that Bohr's theorem holds for $\lambda$ and, for each $N$ consider the operator $T_{N}\colon \mathcal{D}_{\infty,+}(\lambda)\to \mathcal{D}_{\infty,k}(\lambda)$ given by
\[
\sum a_{n} e^{-\lambda_{n}s} \mapsto \sum_{n=1}^{N} a_{n} e^{-\lambda_{n}s} \,.
\]
Each of these is bounded, and, since Bohr's theorem holds, the pointwise limit exits. Now, $\mathcal{D}_{\infty,+}(\lambda)$ is barreled (recall Theorem~\ref{equi-1}) and this gives that the family $(T_{N})_{N}$ is equicontinuous. This implies \ref{equi-16}.\\
Conversely, if \ref{equi-16} holds, this clearly implies \eqref{point} which, as we have seen, gives that Bohr's theorem holds for $\lambda$.\\
Finally note that if $\lambda$ satisfies Bohr's theorem, then $\mathcal{D}^0_{\infty}(\lambda) = \mathcal{D}_{\infty}(\lambda)$, and by Proposition~\ref{basisA} the monomials form a basis of $\mathcal{D}_{\infty,+}(\lambda)$.
%
%
%
%
%
%
%
\end{proof}

Section~\ref{diri} provides us with new examples of frequencies for which the monomials are a basis of $\mathcal{D}_{\infty,+}(\lambda)$. For instance, $\lambda_n =(\log n)^\alpha$
with $\alpha >0$, which satisfies Landau's condition, and so Bohr's theorem. So far we do not know what happens for the frequency $\lambda_n =\log \log n$.

\subsubsection{Nuclearity} \label{tour}

We face now the last property we are interested in: nuclearity.  Let us recall that by \cite{bonet2018frechet}, the space $\mathcal{D}_{\infty} ((\log n))$
is not nuclear, whereas $\mathcal{D}_{\infty} ((n))$ equals the space $H(\mathbb{D})$, which is well known to be nuclear
(see e.g. \cite[Corollary~8, page~499]{jarchow2012locally}). So the question arises naturally: for which frequencies are our spaces nuclear? Proposition~\ref{nuclear2} gives us the answer.

\begin{theorem} \label{nuclear}
Let $\lambda$ be any frequency. Then  $\mathcal{D}_{\infty,+}(\lambda)$ is a nuclear Fr\'echet space if and only if $L(\lambda) =0$.
\end{theorem}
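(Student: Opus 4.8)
The plan is to reduce Theorem~\ref{nuclear} to the general nuclearity criterion already established in Proposition~\ref{nuclear2}. Recall that $\mathcal{D}_{\infty,+}(\lambda) = \mathfrak{X}_+(\lambda)$ for the $\lambda$-admissible normed space $\mathfrak{X}(\lambda) = \mathcal{D}_\infty(\lambda)$. The crux is that the equivalence in Proposition~\ref{nuclear2} connects the condition $L(\lambda)=0$ with the nuclearity of $\mathfrak{X}_+(\lambda)$ \emph{for Banach admissible spaces}, so the main obstacle is that $\mathcal{D}_\infty(\lambda)$ need \emph{not} be a Banach space --- by Theorem~\ref{equivalence}, it is complete precisely when Bohr's theorem holds for $\lambda$. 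Thus Proposition~\ref{nuclear2} does not apply directly, and the argument must be organized so as to bypass this gap.

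First I would prove the forward-looking easy implication. Assume $\mathcal{D}_{\infty,+}(\lambda)$ is nuclear. Since nuclear spaces are in particular Schwartz and (being nuclear Fr\'echet) Montel, hence barrelled, Theorem~\ref{equi-1} forces Bohr's theorem to hold for $\lambda$. But then $\mathcal{D}_\infty(\lambda)$ is a \emph{Banach} space by Theorem~\ref{equivalence}, so $\mathfrak{X}(\lambda)=\mathcal{D}_\infty(\lambda)$ is $\lambda$-admissible Banach and Proposition~\ref{nuclear2} (the implication \ref{nuclear21}$\Rightarrow$\ref{nuclear22}) yields $L(\lambda)=0$. Here the monomials being a Schauder basis --- needed to invoke the Grothendieck--Pietsch criterion --- is guaranteed by Theorem~\ref{equi-1b}, since Bohr's theorem holds.

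For the converse, suppose $L(\lambda)=0$. The key point is that $L(\lambda)=0$ is one of the testable cases from Section~\ref{diri} that \emph{imply} Bohr's theorem for $\lambda$. Hence Bohr's theorem holds, and again by Theorem~\ref{equivalence} the space $\mathcal{D}_\infty(\lambda)$ is a $\lambda$-admissible Banach space. Now Proposition~\ref{nuclear2} applies cleanly: the implication \ref{nuclear22}$\Rightarrow$\ref{nuclear23} gives that $\mathfrak{X}_+(\lambda)=\mathcal{D}_{\infty,+}(\lambda)$ is a nuclear Fr\'echet space with the monomials as a basis. This completes both directions.

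The step I expect to be the genuine obstacle --- and the one that makes this short proof work --- is recognizing that the Banach-space hypothesis of Proposition~\ref{nuclear2} is automatically supplied in \emph{both} directions by the fact that $L(\lambda)=0$ (resp.\ nuclearity) already forces Bohr's theorem, which in turn upgrades $\mathcal{D}_\infty(\lambda)$ to a Banach space via Theorem~\ref{equivalence}. In other words, the apparent incompleteness of $\mathcal{D}_\infty(\lambda)$ is a non-issue here: whenever either side of the desired equivalence holds, Bohr's theorem holds, so we never actually need Proposition~\ref{nuclear2} outside its Banach hypothesis. Writing the proof is then just a matter of threading these implications in the correct order, with no computation required beyond citing Lemma~\ref{zimmermann} implicitly through Proposition~\ref{nuclear2}.
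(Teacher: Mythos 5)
Your proposal is correct and follows essentially the same route as the paper: both directions reduce to Proposition~\ref{nuclear2} by first extracting Bohr's theorem (via Theorem~\ref{equi-1} in the forward direction, via $L(\lambda)=0$ being a testable sufficient condition in the converse), then using Theorem~\ref{equi-1b} for the basis and Theorem~\ref{equivalence} for completeness of $\mathcal{D}_\infty(\lambda)$. The only (harmless) differences are your detour through Montel to get barrelledness, where Fr\'echet already suffices, and your upgrading of $\mathcal{D}_\infty(\lambda)$ to a Banach space in the forward direction, which is not needed since condition \ref{nuclear21} of Proposition~\ref{nuclear2} only requires a normed admissible space.
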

\begin{proof}
If $\mathcal{D}_{\infty,+}(\lambda)$ is a Fr\'echet space, then by Theorems~\ref{equi-1} and~\ref{equi-1b} the monomials are a basis. So, if the space is also nuclear, Proposition~\ref{nuclear2} gives that $L(\lambda) =0$. Conversely, if $L(\lambda)=0$, then Bohr's theorem holds for $\lambda$
and $\mathcal{D}_{\infty}(\lambda)$ is by Theorem~\ref{equivalence} a $\lambda$-admissible Banach space.  Again Proposition~\ref{nuclear2} completes the proof.
\end{proof}

\begin{example}\text{}
\begin{enumerate}[(a)]
\item
$\mathcal{D}_{\infty,+}((n)) = H(\mathbb{D})$  is nuclear, since  $L((n)) =0$.
This is a classic (see e.g. \cite{meise1997introduction}).

\item
$\mathcal{D}_{\infty,+}((\log n))$ and $\mathcal{D}_{\infty,+}((\log p_{n}))$ are  both non-nuclear, since in both cases $L(\lambda)=1$. As mentioned before, the first example is due to Bonet \cite{bonet2018frechet}.

\item
$\mathcal{D}_{\infty,+}((\log n)^\alpha)$  is nuclear for $\alpha >  1$ (since $L(\lambda)=0$)
and not nuclear for $0<\alpha < 1$ (since $L(\lambda)=~\infty$).
\end{enumerate}
\end{example}

\subsection{Hardy spaces of Dirichlet series}  \label{hardy}

With the same spirit as in Section~\ref{structure} we apply now the abstract programme described in Section~\ref{Dseries} to the scale $\mathcal{H}_{p}(\lambda)$ of Hardy spaces of general Dirichlet series for $1 \leq p \leq \infty$ (see \cite{FVGaMeSe_20} for the case of ordinary series). Let us briefly observe that in this case the abscissa defined in \eqref{carapaz} now reads as
\begin{equation} \label{mcalain}
\sigma_{\mathcal{H}_{p}(\lambda)}(D) = \inf \big\{ \sigma\in \mathbb{R} \colon \sum a_{n} e^{- { \lambda_{n}\sigma}} e^{- \lambda_{n} s} \in \mathcal{H}_{p} (\lambda)  \big\} \,.
\end{equation}
With this, following \eqref{fx}, we consider the pre-Fr\'echet space
\[
\mathcal{H}_{p,+}(\lambda) = \big\{ D=\sum a_{n}e^{-\lambda_{n}s} \colon \sigma_{\mathcal{H}_{p}(\lambda)}(D)\leq 0 \big\} \,,
\]
endowed with the locally convex metrizable topology is generated by the sequence of norms
\[
\big\Vert \sum a_{n}e^{-\lambda_{n}s}\big\Vert_{\mathcal{H}_{p} (\lambda),k}
:=\Big\Vert \sum a_{n}e^{-\lambda_{n}\frac{1}{k}}e^{-\lambda_{n}s}\Big\Vert_{\mathcal{H}_{p}}\,,
\]
for $k\in \mathbb{N}$. As in \eqref{step}, for each $k \in \mathbb{N}$ we consider the canonically normed space defined by
\[
\mathcal{H}_{p,k} (\lambda) = \big\{ \sum a_{n} e^{- \lambda_{n} s} \in \mathfrak{D}(\lambda)
\colon \sum a_{n} e^{- \frac{ \lambda_{n}}{k}} e^{- \lambda_{n} s} \in \mathcal{H}_{p} (\lambda)\big\} \,,
\]
which (see \eqref{spec1} and \eqref{spec2}) leads to the two countable projective spectra
\begin{equation} \label{nizza}
\proj (\mathcal{H}_{p,k} (\lambda), i_k)_{k \in \mathbb{N}}
\,\,\, \text{ and } \,\,\,
\proj (\mathcal{H}_{p} (\lambda), \tau_k)_{k \in \mathbb{N}}\,.
\end{equation}
We turn now to the study of the structure of the spaces $\mathcal{H}_{p,+}(\lambda)$.

\subsubsection{Fr\'echet-Schwartz}

%
%

\begin{proposition} \label{projlimitH_p}
Let $\lambda$ be a frequency and $1 \leq p \leq \infty$.  Then  $\mathcal{H}_{p,+}(\lambda)$ is a Fr\'echet  Schwartz space which admits the following representations as projective limit
\[
\mathcal{H}_{p,+}(\lambda) = \proj (\mathcal{H}_{p,k} (\lambda), i_k) %
=\proj (\mathcal{H}_{p} (\lambda), \tau_k) \,.
\]
\end{proposition}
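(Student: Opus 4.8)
The plan is to deduce the whole statement from the abstract machinery of Section~\ref{Dseries} together with the compactness of translations established in Corollary~\ref{uhu}, so that essentially no new analysis is required. The first observation is that, by Example~\ref{restes}, the Hardy space $\mathcal{H}_{p}(\lambda)$ is a $\lambda$-admissible Banach space for every $1 \le p \le \infty$. Consequently Proposition~\ref{projlimitX} applies directly and yields at once that $\mathcal{H}_{p,+}(\lambda)$ is a Fr\'echet space (since $\mathcal{H}_{p}(\lambda)$ is complete) together with the two isomorphic representations
\[
\mathcal{H}_{p,+}(\lambda) = \proj(\mathcal{H}_{p,k}(\lambda), i_k) = \proj(\mathcal{H}_{p}(\lambda), \tau_k)\,.
\]
This already settles the metric structure and the projective-limit descriptions; what remains is to verify the Schwartz property.

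For the Schwartz property I would work with the second representation $\proj(\mathcal{H}_{p}(\lambda), \tau_k)$, whose linking maps are the translation operators $\tau_k \colon \mathcal{H}_{p}(\lambda) \to \mathcal{H}_{p}(\lambda)$, $D \mapsto D_{\frac{1}{k} - \frac{1}{k+1}}$, recorded in \eqref{spec2}. Since $\frac{1}{k} - \frac{1}{k+1} > 0$, Corollary~\ref{uhu} tells us that each $\tau_k$ is compact. In the language of Remark~\ref{proj1}, for each $k$ I may simply take $m = k+1$, so that the single linking map from the $(k+1)$-st space into the $k$-th space is already precompact; the remark then gives that the projective limit is Schwartz. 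This is exactly the argument used for $\mathcal{D}_{\infty,+}(\lambda)$ in Proposition~\ref{projlimit}, except that here the compactness can be invoked on $\mathcal{H}_{p}(\lambda)$ itself, without having to pass through an isometric subspace.

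The point to stress is that there is no genuine obstacle at this stage: the nontrivial content has been front-loaded into Theorem~\ref{teo montel3} and its Corollary~\ref{uhu}, the Montel-type theorem for Hardy spaces over arbitrary frequencies, which crucially does \emph{not} require Bohr's theorem. Given that input the conclusion is immediate. Should one prefer the first representation, the equivalence of the two spectra recorded in \eqref{trompa} transfers compactness: from $\tau_k \circ \varphi_{k+1} = \varphi_k \circ i_k$ and the fact that the $\varphi_k$ are isometric isomorphisms one obtains $i_k = \varphi_k^{-1} \circ \tau_k \circ \varphi_{k+1}$, again compact, so the Schwartz conclusion is independent of the chosen representation.
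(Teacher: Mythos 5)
Your proposal is correct and follows essentially the same route as the paper: Proposition~\ref{projlimitX} (applicable since $\mathcal{H}_p(\lambda)$ is a $\lambda$-admissible Banach space) gives the Fr\'echet structure and both projective-limit representations, and the Schwartz property follows from compactness of the linking maps $\tau_k$ together with Remark~\ref{proj1}. The only cosmetic difference is that you invoke Corollary~\ref{uhu} where the paper cites Theorem~\ref{teo montel3} directly, but the corollary is an immediate consequence of that theorem, so the argument is the same.
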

\begin{proof}
Proposition~\ref{projlimitX} and \eqref{nizza} give the two representations of the pre-Fr\'echet space $\mathcal{H}_{p,+}(\lambda)$ as projective limits. Since each $\mathcal{H}_{p}(\lambda)$ is
complete, the projective spectra in \eqref{nizza} consist of Banach spaces and, then, $\mathcal{H}_{p,+}(\lambda)$ is Fr\'echet. By Theorem~\ref{teo montel3} all translation operators
$\tau_k$ are compact operators; then Remark~\ref{proj1} gives that the space is also Schwartz.
\end{proof}

\begin{example} \label{galdos}
Let $\lambda$ be a $\mathbb{Q}$-linearly independent frequency. From \cite[Corollary~3.36]{defant2019hardy}
combined with Khinchin's inequality  we deduce that $\mathcal{H}_{p} (\lambda) = \ell_2$ (where each $\lambda$-Dirichlet series is identified with the sequence of its coefficients). As a consequence
$\mathcal{H}_{p,+} (\lambda)
=  \ell_{2}(A(\lambda))$ for every $1 \leq p < \infty$.
\end{example}

\subsubsection{Coincidence}

Since $\mathcal{D}_{\infty}(\lambda)\subset \mathcal{H}_{\infty}(\lambda)$, and $\mathcal{D}_{\infty,+}(\lambda) = \proj\mathcal{D}_{\infty,k}(\lambda)$
as well as  $\mathcal{H}_{\infty,+}(\lambda) = \proj\mathcal{H}_{\infty,k}(\lambda)$,  from Remark~\ref{proj} we obtain that
there is a continuous embedding
\[
 \mathcal{D}_{\infty,+}(\lambda) \,\hookrightarrow \, \mathcal{H}_{\infty,+}(\lambda)\,,
\]
that preserves  Dirichlet and Fourier coefficients.

\begin{theorem} \label{equalityHp}
$\mathcal{D}_{\infty,+}(\lambda)=\mathcal{H}_{\infty,+}(\lambda)$ if and only if Bohr's theorem holds for $\lambda$.
\end{theorem}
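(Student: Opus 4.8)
The plan is to prove the two implications separately, working at the level of the Banach precursors $\mathcal{D}_\infty(\lambda)$ and $\mathcal{H}_\infty(\lambda)$ and then transferring to the projective limits via the descriptions in Propositions~\ref{projlimit} and~\ref{projlimitH_p}. The easy direction is that Bohr's theorem forces coincidence, and the substantive direction is the converse.

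For the implication ``Bohr's theorem $\Rightarrow$ coincidence'' I would invoke Theorem~\ref{equivalence}, which says that Bohr's theorem for $\lambda$ is equivalent to $\mathcal{D}_\infty(\lambda)=\mathcal{H}_\infty(\lambda)$ isometrically and coefficient preserving. Under this identification the two families of defining norms agree level by level, since $\|D\|_{\mathcal{D}_\infty(\lambda),k}=\|D_{1/k}\|_{\mathcal{D}_\infty(\lambda)}=\|D_{1/k}\|_{\mathcal{H}_\infty(\lambda)}=\|D\|_{\mathcal{H}_\infty(\lambda),k}$ for every $k$. Consequently the two projective spectra $(\mathcal{D}_\infty(\lambda),\tau_k)$ and $(\mathcal{H}_\infty(\lambda),\tau_k)$ are literally the same spectrum, and Propositions~\ref{projlimit} and~\ref{projlimitH_p} identify their common projective limit with $\mathcal{D}_{\infty,+}(\lambda)$ and with $\mathcal{H}_{\infty,+}(\lambda)$, respectively, yielding the equality.

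For the converse I would argue completeness and then apply Theorem~\ref{equi-1}. Recall that $\mathcal{H}_{\infty,+}(\lambda)$ is always Fr\'echet by Proposition~\ref{projlimitH_p}, that $\mathcal{D}_\infty(\lambda)$ embeds isometrically into the Banach space $\mathcal{H}_\infty(\lambda)$ (as used in the proof of Proposition~\ref{projlimit}), and that the coefficient functionals are equicontinuous on $\mathcal{D}_{\infty,+}(\lambda)$. Take a Cauchy sequence $(D^{(m)})_m$ in $\mathcal{D}_{\infty,+}(\lambda)$. For each fixed $k$ the translates $(D^{(m)}_{1/k})_m$ are Cauchy in $\mathcal{D}_\infty(\lambda)$, hence in $\mathcal{H}_\infty(\lambda)$, where they converge to some $g_k$; since the coefficients $a_n^{(m)}$ converge to some $a_n$, comparing coefficients shows that $g_k=D_{1/k}$ for the single limit series $D=\sum a_n e^{-\lambda_n s}$, so that $D\in\mathcal{H}_{\infty,+}(\lambda)$. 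The hypothesis $\mathcal{D}_{\infty,+}(\lambda)=\mathcal{H}_{\infty,+}(\lambda)$ then places $D$ in $\mathcal{D}_{\infty,+}(\lambda)$, i.e.\ $D_{1/k}\in\mathcal{D}_\infty(\lambda)$ for every $k$. Because $\mathcal{D}_\infty(\lambda)$ carries precisely the norm induced from $\mathcal{H}_\infty(\lambda)$, the convergence $D^{(m)}_{1/k}\to D_{1/k}$ already takes place in $\mathcal{D}_\infty(\lambda)$, that is $\|D^{(m)}-D\|_{\mathcal{D}_\infty(\lambda),k}\to 0$ for every $k$, so $D^{(m)}\to D$ in $\mathcal{D}_{\infty,+}(\lambda)$. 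Thus $\mathcal{D}_{\infty,+}(\lambda)$ is complete, hence a Fr\'echet space, and Theorem~\ref{equi-1} gives Bohr's theorem for $\lambda$.

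The main obstacle is exactly the converse, and within it the step of upgrading the (a priori merely set-theoretic) equality of the two spaces to completeness of $\mathcal{D}_{\infty,+}(\lambda)$ in its own, possibly finer, topology. The point that makes this go through without appealing to an open-mapping or closed-graph theorem is the isometric inclusion $\mathcal{D}_\infty(\lambda)\hookrightarrow\mathcal{H}_\infty(\lambda)$: it ensures that norm convergence of the translates in the ambient Banach space, to a limit which the hypothesis returns to $\mathcal{D}_\infty(\lambda)$, is automatically convergence in the $\mathcal{D}_\infty(\lambda)$-norm, and hence in $\mathcal{D}_{\infty,+}(\lambda)$.
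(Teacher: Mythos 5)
Your proposal is correct and follows essentially the same route as the paper: the forward direction via Theorem~\ref{equivalence} and the identification of the projective spectra, and the converse by transferring completeness from $\mathcal{H}_{\infty,+}(\lambda)$ to $\mathcal{D}_{\infty,+}(\lambda)$ and invoking Theorem~\ref{equi-1}. The only difference is that the paper asserts the completeness step in a single sentence, while you spell out the underlying argument (isometric inclusion of $\mathcal{D}_{\infty}(\lambda)$ in $\mathcal{H}_{\infty}(\lambda)$, coefficient convergence identifying the limit, and the hypothesis returning it to $\mathcal{D}_{\infty,+}(\lambda)$), which is precisely the justification the paper leaves implicit.
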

\begin{proof}
If Bohr's theorem holds for $\lambda$, then we know from Theorem~\ref{equivalence}  that $\mathcal{D}_{\infty,k}(\lambda)=\mathcal{H}_{\infty,k}(\lambda)$ for every $k$. Hence the claim follows by
Remark~\ref{proj}. Conversely, if $\mathcal{D}_{\infty,+}(\lambda)=\mathcal{H}_{\infty,+}(\lambda)$, then $\mathcal{D}_{\infty,+}(\lambda)$ is complete, and so we deduce from  Theorem~\ref{equi-1} that Bohr's theorem holds for $\lambda$.
\end{proof}

\subsubsection{Bases}

\begin{proposition} \label{basis}
Let  $\lambda=(\lambda_{n})$ be a frequency and $1 \leq p \leq \infty$.
Then the monomials $\{ e^{-\lambda_{n} s} \}_{n}$ form a  basis
\begin{enumerate}[(i)]
\item \label{basis1}  for $\mathcal{H}_{p,+}(\lambda)$,  whenever  $1 < p< \infty$.
\item \label{basis2} for $\mathcal{H}_{1,+}(\lambda)$,  whenever $\lambda$ satisfies Bohr's theorem.
\item \label{basis3} for $\mathcal{H}_{\infty,+}(\lambda)$ if and only if $\lambda$ satisfies Bohr's theorem.
\end{enumerate}
\end{proposition}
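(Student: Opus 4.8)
The plan is to run all three cases through Proposition~\ref{basisA}, which identifies ``the monomials $\{e^{-\lambda_{n}s}\}_{n}$ form a basis of $\mathfrak{X}_{+}(\lambda)$'' with the coincidence $\mathfrak{X}(\lambda)=\mathfrak{X}^{0}(\lambda)$ (equivalently $\sigma_{\mathfrak{X}(\lambda)}=\sigma_{\mathfrak{X}^{0}(\lambda)}$ on all of $\mathfrak{D}(\lambda)$). Case~\ref{basis1} is then immediate: for $1<p<\infty$ the monomials already form a Schauder basis of the Banach space $\mathcal{H}_{p}(\lambda)$ by \cite[Theorem~4.16]{defant2019hardy} (see Example~\ref{restes}(b)), so $\mathcal{H}_{p}(\lambda)=\mathcal{H}_{p}^{0}(\lambda)$ and the final ``in particular'' clause of Proposition~\ref{basisA} applies verbatim.

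For case~\ref{basis3} I would argue both implications at the level of abscissas. By Proposition~\ref{basisA} the monomials form a basis of $\mathcal{H}_{\infty,+}(\lambda)$ exactly when $\sigma_{\mathcal{H}_{\infty}(\lambda)}(D)=\sigma_{\mathcal{H}_{\infty}^{0}(\lambda)}(D)$ for every $D$, and by part~\ref{exA4} of Example~\ref{exA} the right-hand side equals $\sigma_{u}(D)$. Since $\mathcal{D}_{\infty}(\lambda)$ sits isometrically inside $\mathcal{H}_{\infty}(\lambda)$ (as used in the proof of Proposition~\ref{projlimit}), one has $\sigma_{\mathcal{H}_{\infty}(\lambda)}(D)\le\sigma_{\mathcal{D}_{\infty}(\lambda)}(D)=\sigma_{b}(D)$ by Example~\ref{exA}(c). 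Hence if the monomials are a basis, then $\sigma_{u}(D)\le\sigma_{b}(D)$ for all $D$; specializing to $D\in\mathcal{D}_{\infty}^{\ext}(\lambda)$, where $\sigma_{b}(D)\le0$, gives $\sigma_{u}(D)\le0$, i.e. Bohr's theorem. For the converse I would use Theorem~\ref{equalityHp} to identify $\mathcal{H}_{\infty,+}(\lambda)=\mathcal{D}_{\infty,+}(\lambda)$ coefficient-preservingly, and Theorem~\ref{equi-1b}, which supplies the monomials as a basis of $\mathcal{D}_{\infty,+}(\lambda)$; transporting along the coefficient-preserving isomorphism completes~\ref{basis3}.

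Case~\ref{basis2} is the delicate one. Again it suffices, by Proposition~\ref{basisA}, to prove $\mathcal{H}_{1}(\lambda)=\mathcal{H}_{1}^{0}(\lambda)$ under Bohr's theorem, i.e. that for each $g\in\mathcal{H}_{1}(\lambda)$ and each $\sigma>0$ the truncations $\sum_{\lambda_{n}<x}\widehat{g}_{n}e^{-\lambda_{n}\sigma}e^{-\lambda_{n}s}$ converge in $\mathcal{H}_{1}(\lambda)$. Writing $U_{x}^{(\sigma)}\colon h\mapsto\sum_{\lambda_{n}<x}\widehat{h}_{n}e^{-\lambda_{n}\sigma}e^{-\lambda_{n}s}$, I would reduce this to the single statement that for each fixed $\sigma>0$ the family $\{U_{x}^{(\sigma)}\}_{x}$ is uniformly bounded on $\mathcal{H}_{1}(\lambda)$: on $\lambda$-polynomials $U_{x}^{(\sigma)}$ eventually coincides with the bounded translation $\tau_{\sigma}$, so a Banach--Steinhaus argument, using density of the polynomials in $\mathcal{H}_{1}(\lambda)$, promotes the uniform bound to the convergence $U_{x}^{(\sigma)}h\to h_{\sigma}$ for every $h$. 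By Proposition~\ref{Prop:BasicEstimationsAbscissae} this uniform boundedness is in turn equivalent to the ``$\mathcal{H}_{1}$-Bohr inequality''
\[
\limsup_{x\to\infty}\frac{\log\big\|\sum_{\lambda_{n}<x}a_{n}e^{-\lambda_{n}s}\big\|_{\mathcal{H}_{1}(\lambda)}}{x}\le\sigma_{\mathcal{H}_{1}(\lambda)}(D),
\]
valid for all $D=\sum a_{n}e^{-\lambda_{n}s}$.

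The uniform boundedness of the $U_{x}^{(\sigma)}$ is exactly where the real work lies, and where Bohr's theorem must enter. The naive coefficient estimate only gives $\|U_{x}^{(\sigma)}h\|_{\mathcal{H}_{1}}\le\|h\|_{\mathcal{H}_{1}}\sum_{n}e^{-\lambda_{n}\sigma}$, which is finite precisely when $\sigma>L(\lambda)$; for frequencies with $L(\lambda)>0$ that still satisfy Bohr's theorem (such as $(\log n)$ or $(\log p_{n})$, both with $L=1$) this collapses, and the sharp spectral cut-off at $x$ cannot be tamed by smallness alone, this being the familiar failure of partial-sum projections to be bounded on $L_{1}$. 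The route I would pursue is to pass to the Abschnitt decomposition: Corollary~\ref{abschnitt} (valid for every frequency) reduces the norm to a supremum of Abschnitt norms, and on a single Abschnitt the frequencies lie in a finitely generated $\mathbb{Q}$-span, where the truncated translations become controllable through the \emph{hypercontractivity of the translation operators} announced in the introduction. I expect this hypercontractivity input, together with transferring the relevant Fej\'er/Poisson multiplier estimates to $\mathcal{H}_{1}$ of the Dirichlet group, to be the main obstacle; it is also the one ingredient not formally available from the material preceding the statement.
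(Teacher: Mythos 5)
Your case~\ref{basis1} and the ``if'' half of~\ref{basis3} are correct and coincide with the paper's argument (Proposition~\ref{basisA} plus the basis property of the monomials in $\mathcal{H}_p(\lambda)$, respectively Theorem~\ref{equalityHp} plus Theorem~\ref{equi-1b}). The ``only if'' half of~\ref{basis3}, however, contains a false step: you assert that $\sigma_b(D)\le 0$ for every $D\in\mathcal{D}_{\infty}^{\ext}(\lambda)$. Membership in $\mathcal{D}_{\infty}^{\ext}(\lambda)$ only says that the limit function has a bounded holomorphic \emph{extension} to $[\re>0]$; it gives no control on $\sigma_c(D)$, and since $\sigma_b(D)\ge\sigma_c(D)$, a series in $\mathcal{D}_{\infty}^{\ext}(\lambda)$ with $\sigma_c(D)>0$ --- exactly the kind of series one must worry about when Bohr's theorem fails --- has $\sigma_b(D)>0$, so your inequality $\sigma_u(D)\le\sigma_b(D)$ yields nothing for it. In effect, ``$\sigma_b\le 0$ on $\mathcal{D}_{\infty}^{\ext}(\lambda)$'' is itself a Bohr-type convergence statement, essentially what is to be proved. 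The missing bridge from extensions to convergence is the Riesz-mean machinery: either as the paper runs it --- the basis property forces, via Theorem~\ref{equivalence2} (the $\mathcal{H}_\infty$-norm of a polynomial is its sup on the half-plane), every $D\in\mathcal{H}_{\infty,+}(\lambda)$ to have uniformly convergent translates, hence $\mathcal{H}_{\infty,+}(\lambda)=\mathcal{D}_{\infty,+}(\lambda)$, and Theorem~\ref{equalityHp} (whose proof ultimately rests on the Riesz means of Remark~\ref{dufay}) gives Bohr's theorem --- or by showing directly, via \cite[Proposition~3.4]{schoolmann2018bohr} and \eqref{pontes}, that the bounded extension belongs to $\HRe=\mathcal{H}_\infty(\lambda)$, so that $\sigma_{\mathcal{H}_\infty(\lambda)}(D)\le 0$, which your identity $\sigma_u=\sigma_{\mathcal{H}_\infty(\lambda)}$ would then convert into Bohr's theorem. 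Either repair needs ingredients your proposal does not contain.

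For~\ref{basis2} you have correctly isolated the crux --- the uniform partial-sum bound $\sup_N\big\|\sum_{n=1}^N a_ne^{-\varepsilon\lambda_n}e^{-\lambda_n s}\big\|_{\mathcal{H}_1(\lambda)}\le c\,\big\|\sum a_ne^{-\lambda_n s}\big\|_{\mathcal{H}_1(\lambda)}$ --- but the route you propose for it cannot work. Hypercontractivity is \emph{not} a consequence of Bohr's theorem: by Remark~\ref{zum} (Bayart's example) there is a frequency satisfying Bohr's condition, hence Bohr's theorem, for which $\tau_\sigma\colon\mathcal{H}_1(\lambda)\to\mathcal{H}_2(\lambda)$ is unbounded for every $\sigma>0$; and Theorem~\ref{jubiliert} needs structural hypotheses (natural-type decomposition, $b_j\to\infty$) that Bohr's theorem does not supply, even Abschnitt-wise. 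Moreover, hypercontractivity concerns translations, not sharp spectral cut-offs, so it would not by itself tame the $L_1$ partial-sum projections, which is the obstruction you yourself point out. The paper's proof (Lemma~\ref{0=}, case $p=1$) uses a different device: the isometric embedding $\Psi\colon\mathcal{H}_1(\lambda)\hookrightarrow\mathcal{D}_\infty(\lambda,\mathcal{H}_1(\lambda))$, $\sum a_ne^{-\lambda_n s}\mapsto\sum (a_ne^{-\lambda_n z})e^{-\lambda_n s}$, which transports the problem into \emph{vector-valued} $\mathcal{D}_\infty$, where the required uniform partial-sum estimate is precisely the vector-valued Bohr theorem of \cite{CaDeMaSc_VV} (combined with a Hahn--Banach argument passing from a vertical line to the half-plane); Proposition~\ref{Prop:BasicEstimationsAbscissae} then yields $\sigma_{\mathcal{H}_1^0(\lambda)}(D)\le 0$. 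This vector-valued transfer is the ingredient your sketch is missing, and it is genuinely different from anything in your outline.
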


\noindent We structure the proof with the following lemma.

\begin{lemma} \label{0=}
For every frequency $\lambda$ and $1 < p < \infty$ we have
$\mathcal{H}_{p}(\lambda) = \mathcal{H}^0_{p}(\lambda)$, and for $p=1$ this holds true whenever $\lambda$ satisfies Bohr's theorem.
\end{lemma}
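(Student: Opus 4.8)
The plan is to treat the two ranges of $p$ by entirely different mechanisms, since the monomials fail to be a basis of $\mathcal{H}_1(\lambda)$ (just as the trigonometric system is not a basis of $L_1$), so the soft argument available for $1<p<\infty$ must break down at $p=1$. For $1<p<\infty$ I would argue directly from the fact, already recorded in Example~\ref{restes} and taken from \cite[Theorem~4.16]{defant2019hardy}, that the monomials $\{e^{-\lambda_n s}\}_n$ form a Schauder basis of $\mathcal{H}_p(\lambda)$. Given $D=\sum a_n e^{-\lambda_n s}\in\mathcal{H}_p(\lambda)$ and $\sigma>0$, admissibility \ref{as3} guarantees $D_\sigma\in\mathcal{H}_p(\lambda)$, and because the monomials are a basis the truncations $\sum_{\lambda_n<x}a_n e^{-\lambda_n\sigma}e^{-\lambda_n s}$ converge in $\mathcal{H}_p(\lambda)$ (necessarily to $D_\sigma$, by the continuity of the coefficient functionals \ref{as2}). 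Hence $D\in\mathcal{H}^0_p(\lambda)$, and together with the trivial inclusion $\mathcal{H}^0_p(\lambda)\subseteq\mathcal{H}_p(\lambda)$ this gives the equality; this is exactly the observation, made right after the definition of $\mathfrak{X}^0(\lambda)$, that a basis of $\mathfrak{X}(\lambda)$ forces $\mathfrak{X}(\lambda)=\mathfrak{X}^0(\lambda)$.

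For $p=1$ the basis is no longer available, and this is where Bohr's theorem must enter. The strategy I would pursue is to reduce the $\mathcal{H}_1$-convergence of the truncations of the translates $D_\sigma$ to an application of a vector-valued form of Bohr's theorem, through the isometric embedding $\Psi\colon\mathcal{H}_1(\lambda)\hookrightarrow \HRevv{\mathcal{H}_1(\lambda)}$ of \cite[Lemma~4.9]{defantschoolmann2019Hptheory}, already exploited in the proof of Theorem~\ref{teo montel3}. Writing $f=\Psi(D)$, the function $f$ is a bounded, holomorphic, uniformly almost periodic $\mathcal{H}_1(\lambda)$-valued function on $[\re >0]$ whose Bohr coefficients are $a_{\lambda_n}(f)=a_n e^{-\lambda_n z}\in\mathcal{H}_1(\lambda)$. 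Since $\lambda$ satisfies Bohr's theorem, I would invoke its vector-valued analogue to conclude $\sigma_u(f)\le 0$, i.e.\ that the $\mathcal{H}_1(\lambda)$-valued Dirichlet series $\sum_n a_{\lambda_n}(f)e^{-\lambda_n s}$ converges uniformly on every half plane $[\re s>\mu]$, $\mu>0$, in the norm of $\mathcal{H}_1(\lambda)$.

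The argument then closes by evaluating this uniform convergence at the real point $s=\mu$: the truncation $\sum_{\lambda_n<x}a_n e^{-\lambda_n z}e^{-\lambda_n\mu}$ is, viewed as an element of $\mathcal{H}_1(\lambda)$ in the variable $z$, precisely the $x$-th partial sum of $D_\mu$, while its limit $f(\mu)=\sum_n a_n e^{-\lambda_n z}e^{-\lambda_n\mu}$ equals $D_\mu$; hence the partial sums of $D_\mu$ converge to $D_\mu$ in $\mathcal{H}_1(\lambda)$ for every $\mu>0$, which is exactly $D\in\mathcal{H}^0_1(\lambda)$. The main obstacle is the step I flagged as the vector-valued Bohr theorem: "Bohr's theorem holds for $\lambda$" is a scalar statement, so I must either quote a Banach-space-valued version or reprove $\sigma_u(f)\le 0$ by hand, which I would do exactly as in the scalar theory, via the Riesz-mean estimate \eqref{pontes} together with the uniform boundedness of the Riesz means of $f$ (which, as in Lemma~\ref{lemma0.1} and Theorem~\ref{teo montel2}, depends only on the geometry of $\lambda$ and is insensitive to whether the coefficients are scalars or vectors). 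Alternatively one can bypass $\Psi$ and work directly in $\mathcal{H}_1(\lambda)$, using Proposition~\ref{Prop:BasicEstimationsAbscissae} to reduce the claim $\sigma_{\mathcal{H}^0_1(\lambda)}(D)\le 0$ to the sub-exponential growth of $\big\|\sum_{\lambda_n<x}a_n e^{-\lambda_n s}\big\|_{\mathcal{H}_1(\lambda)}$ and then controlling these partial sums by the (well-behaved) first Riesz means; in either formulation the passage from Ces\`aro/Riesz means to genuine partial sums is precisely the arithmetic content supplied by Bohr's theorem.
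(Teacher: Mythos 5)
Your first half is correct and is exactly the paper's argument: for $1<p<\infty$ the monomials form a Schauder basis of $\mathcal{H}_p(\lambda)$ by \cite[Theorem~4.16]{defant2019hardy}, and a basis of $\mathfrak{X}(\lambda)$ forces $\mathfrak{X}(\lambda)=\mathfrak{X}^0(\lambda)$, as observed right after the definition of $\mathfrak{X}^0(\lambda)$. For $p=1$ your skeleton also matches the paper's proof: embed $\mathcal{H}_1(\lambda)$ isometrically into a vector-valued space via \cite[Lemma~4.9]{defantschoolmann2019Hptheory}, apply a vector-valued form of Bohr's theorem, and deduce convergence in $\mathcal{H}_1(\lambda)$ of the partial sums of every translate. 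The problem is that the pivotal step --- the vector-valued Bohr theorem --- is precisely what you never establish, and the one self-contained argument you offer for it would prove a false statement.

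You propose to ``reprove $\sigma_u(f)\le 0$ by hand \dots via the Riesz-mean estimate \eqref{pontes} together with the uniform boundedness of the Riesz means of $f$'', stressing that this machinery ``depends only on the geometry of $\lambda$''. That is exactly why it cannot work: Lemma~\ref{lemma0.1} and \eqref{pontes} are valid for \emph{every} frequency, and they control the Riesz means $R^\lambda_x(f)$, not the partial sums. If uniform boundedness of Riesz means implied uniform convergence of partial sums, then every frequency would satisfy Bohr's theorem, which is false (by \cite[Theorem~5.2]{schoolmann2018bohr} and Theorem~\ref{equivalence} there are frequencies for which $\mathcal{D}_\infty(\lambda)$ is not even complete). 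Your closing sentence concedes that the passage from Riesz means to partial sums ``is precisely the arithmetic content supplied by Bohr's theorem'', but nowhere do you show how the \emph{scalar} hypothesis delivers partial-sum control in the norm of $\mathcal{H}_1(\lambda)$, or of a general Banach space $X$; that transfer is the entire content of the $p=1$ case. The paper supplies it by quotation: from \cite[Proof of Theorem~4.12]{CaDeMaSc_VV} it takes the inequality $\sup_N\sup_{t\in\mathbb{R}}\big\|\sum_{n=1}^N a_n e^{-\lambda_n(\varepsilon+it)}\big\|_X\le c\,\big\|\sum a_n e^{-\lambda_n s}\big\|_{\mathcal{D}_\infty(\lambda,X)}$ --- this is where the scalar Bohr theorem is upgraded, via a uniform-boundedness argument and Hahn--Banach, to arbitrary Banach coefficients --- combines it with a Hahn--Banach identity replacing $\sup_t$ by $\sup_{\re s>\varepsilon}$ for polynomials, and then feeds the resulting uniform bound on the partial sums of translates into the Bohr--Cahen formula of Proposition~\ref{Prop:BasicEstimationsAbscissae} to conclude $\sigma_{\mathcal{H}_1^0(\lambda)}(D)\le 0$. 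So of your two branches, ``quote a Banach-space-valued version'' is the only viable one, and in your write-up it remains a citation you hope exists rather than an argument; the ``reprove by hand'' branch, as sketched, fails.
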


\begin{proof}
As we have already mentioned, by \cite[Theorem~4.16]{defant2019hardy} the monomials form a basis in $\mathcal{H}_p(\lambda)$. This settles the case $1 < p < \infty$. In order to tackle the case
$p=1$, let us recall first that, by definition,  $\mathcal{H}_{1}^{0}(\lambda) \subseteq \mathcal{H}_{1} (\lambda)$. We have to see that the reverse inequality holds if $\lambda$ satisfies
Bohr's theorem. We go for a moment into the theory of vector valued general Dirichlet series. The basic definitions needed here are just straightforward translations of the scalar valued ones. The reader is referred to \cite{CaDeMaSc_VV} for a
complete account on the theory. In \cite[Lemma~4.9]{defantschoolmann2019Hptheory} we have that, for any frequency $\lambda$, the mapping $\mathcal{H}_{1}(\lambda)\hookrightarrow \mathcal{D}_{\infty}(\lambda,\mathcal{H}_{1}(\lambda))$ given by
\begin{equation}\label{inclusion}
\sum a_{n} e^{-\lambda_{n}s} \mapsto \sum (a_{n}e^{-\lambda_{n} z})e^{-\lambda_{n}s}
\end{equation}
defines an isometry.
Once we have this, note that, given a Banach space $X$ and $\varepsilon >0$, there exists $c>0$ so that
\begin{equation*}
\sup_{N \in \mathbb{N}} \sup_{t \in \mathbb{R}} \Big\Vert \sum_{n=1}^{N} a_{n} e^{- \lambda_{n} (\varepsilon + it)}  \Big\Vert_{X}
\leq c \Big\Vert \sum a_{n} e^{- \lambda_{n} s} \Big\Vert_{ \mathcal{D}_{\infty} (\lambda, X)}
\end{equation*}
for every $X$-valued Dirichlet series in $\mathcal{D}_{\infty} (\lambda, X)$ (see \cite[Proof of Theorem~4.12]{CaDeMaSc_VV}). On the other hand, an argument with the Hahn-Banach theorem after \cite[Comment after Proposition~2.4]{schoolmann2018bohr} shows that
\[
\sup_{t \in \mathbb{R}} \Big\Vert \sum_{n=1}^{N} a_{n} e^{- \lambda_{n} (\varepsilon + it)}  \Big\Vert_{X}
= \sup_{\re s > \varepsilon} \Big\Vert \sum_{n=1}^{N} a_{n} e^{- \lambda_{n} s}  \Big\Vert_{X}
\]
for every $X$-valued Dirichlet polynomial. With this and \eqref{inclusion}, given $\sum a_{n} e^{- \lambda_{n}s} \in \mathcal{H}_{1}(\lambda)$ and $\varepsilon >0$ we have
\begin{multline*}
\Big\Vert \sum_{n=1}^{N} a_{n} e^{- \varepsilon \lambda_{n}} e^{-\lambda_{n} s} \Big\Vert_{ \mathcal{H}_{1} (\lambda)}
= \Big\Vert \sum_{n=1}^{N} \big( a_{n} e^{- \varepsilon \lambda_{n}} e^{-\lambda_{n} s} \big) e^{-\lambda_{n} z} \Big\Vert_{ \mathcal{D}_{\infty} (\lambda,\mathcal{H}_{1} (\lambda))} \\
= \sup_{\re z >0}  \Big\Vert \sum_{n=1}^{N} \big( a_{n} e^{-\lambda_{n} s} \big)  e^{- \varepsilon \lambda_{n}} e^{-\lambda_{n} z} \Big\Vert_{\mathcal{H}_{1} (\lambda)}
\leq c  \Big\Vert \sum \big( a_{n} e^{-\lambda_{n} s} \big)  e^{-\lambda_{n} z} \Big\Vert_{ \mathcal{D}_{\infty} (\lambda,\mathcal{H}_{1} (\lambda))}\\
= c  \Big\Vert \sum a_{n} e^{-\lambda_{n} s} \Big\Vert_{\mathcal{H}_{1} (\lambda)} \,.
\end{multline*}
Then Proposition~\ref{Prop:BasicEstimationsAbscissae} gives $\sigma_{\mathcal{H}_{1}^{0}(\lambda)}(D) \leq 0$ and this implies that $\big(\sum_{n=1}^{N}  a_{n} e^{-\lambda_{n} \sigma} e^{-\lambda_{n} s} \big)_{N}$ is  convergent for every $\sigma >0$. Since the series is in $\mathcal{H}_{1}(\lambda)$ we finally obtain  $\sum a_{n} e^{- \lambda_{n}s} \in \mathcal{H}_{1}^{0}(\lambda)$.
\end{proof}

\begin{proof}[Proof of Proposition~\ref{basis}]\text{}
Both statements \ref{basis1} and \ref{basis2} are immediate consequences of Proposition~\ref{basisA} and Lemma~\ref{0=}.
If the monomials $\{ e^{-\lambda_{n} s} \}$ form a  basis for $\mathcal{H}_{\infty,+}(\lambda)$, then they form a basis for its subspace
$\mathcal{D}_{\infty,+}(\lambda)$, and so the claim follows from Theorem~\ref{equivalence2} and Theorem~\ref{equalityHp}.
\end{proof}

We finish this section by making a short comment on the abscissas that we have defined in \eqref{mcalain}. For ordinary Dirichlet series (i.e. $\lambda = (\log n)$) we know from  \cite[Theorem~12.4]{defant2018Dirichlet} that the abscissa for any $1 \leq p \leq \infty$ can be reformulated as
\[
\sigma_{\mathcal{H}_{p}}(D) =
\inf \Big\{ \sigma >0 \colon \big( \sum_{n=1}^{N} \frac{a_{n}}{n^{\sigma}}  n^{-s} \big)_{N} \text{ converges in } \mathcal{H}_{p}((\log n))  \Big\} \,.
\]
With the notation from Section \ref{Abscissassection} this means that  $1 \leq p \leq \infty$ and any ordinary Dirichlet series $D$ we have
\[
\sigma_{\mathcal{H}_{p}((\log n))}(D) = \sigma_{\mathcal{H}^0_{p}((\log n))}(D)\,.
\]
Then Lemma~\ref{0=} shows that this holds for  $1 < p < \infty$ and any frequency $\lambda$, and for $p=1$ and
any frequency $\lambda$ satisfying Bohr's theorem. Finally, we note that under Bohr's theorem for $\lambda$
we by definition and Theorem~\ref{equivalence} also have that
\[
\sigma_{\mathcal{H}_{\infty}(\lambda)}(D) = \sigma_{\mathcal{D}_{\infty}(\lambda)}(D) =
\sigma_{\mathcal{D}^0_{\infty}(\lambda)}(D)=\sigma_{\mathcal{H}^0_{\infty}(\lambda)}(D)\,.
\]

\subsubsection{Nuclearity}

In Section~\ref{tour} we settled the question of when  $\mathcal{D}_{\infty,+}(\lambda)$ is nuclear. We face now the same question for the Fr\'echet spaces $\mathcal{H}_{p,+}(\lambda)$. Again, the answer comes from Proposition~\ref{nuclear2}. We
have already seen (in Propositions~\ref{projlimitH_p} and~\ref{basis}) that for $1 <  p < \infty$ and any frequency $\lambda$, the space $\mathcal{H}_{p} (\lambda)$ is complete and the monomials form a  basis. This is also the case for $p=1$
or $p=\infty$ whenever $\lambda$ satisfies Bohr's theorem. Let us finally recall that, if $L(\lambda)=0$, then Bohr's theorem holds for $\lambda$. With this altogether (and Proposition~\ref{nuclear2}) we have a full description of when are these Hardy
spaces nuclear.

\begin{proposition} \label{nuclearA}
Let  $\lambda$ be a frequency. Then
\begin{enumerate}[(i)]
\item for $1 < p< \infty$ the Fr\'echet space  $\mathcal{H}_{p,+}(\lambda)$ is nuclear if and only if  $L(\lambda)=0$.
\item for $p=1$ and $p=\infty$ the Fr\'echet space $\mathcal{H}_{p,+}(\lambda)$ is nuclear and $\lambda$ satisfies Bohr's theorem if and only if $L(\lambda)=0$.
\end{enumerate}
\end{proposition}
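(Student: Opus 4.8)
The plan is to reduce everything to Proposition~\ref{nuclear2}, which characterizes the simultaneous validity of nuclearity of $\mathfrak{X}_{+}(\lambda)$ \emph{and} of the monomials forming a basis in terms of the single geometric condition $L(\lambda)=0$. The two supporting facts I would use repeatedly are that, for every $1 \leq p \leq \infty$, the space $\mathcal{H}_{p}(\lambda)$ is a $\lambda$-admissible Banach space (Example~\ref{restes} together with Proposition~\ref{projlimitH_p}), so that $\mathcal{H}_{p,+}(\lambda) = \mathfrak{X}_{+}(\lambda)$ for $\mathfrak{X}(\lambda)=\mathcal{H}_{p}(\lambda)$, and the basis statements collected in Proposition~\ref{basis}.

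For part (i), $1<p<\infty$, I would argue as follows. If $L(\lambda)=0$, then the implication \ref{nuclear22}$\Rightarrow$\ref{nuclear23} of Proposition~\ref{nuclear2}, applied to the $\lambda$-admissible Banach space $\mathcal{H}_{p}(\lambda)$, immediately yields that $\mathcal{H}_{p,+}(\lambda)$ is a nuclear Fr\'echet space. Conversely, assume that $\mathcal{H}_{p,+}(\lambda)$ is nuclear. By Proposition~\ref{basis}\ref{basis1} the monomials $\{e^{-\lambda_{n}s}\}_{n}$ always form a basis of $\mathcal{H}_{p,+}(\lambda)$ in this range of $p$, so condition \ref{nuclear21} of Proposition~\ref{nuclear2} is met (taking $\mathfrak{X}(\lambda)=\mathcal{H}_{p}(\lambda)$), and hence $L(\lambda)=0$. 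This settles (i).

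For part (ii), $p\in\{1,\infty\}$, I would prove both implications of the stated equivalence. For the direction from right to left: if $L(\lambda)=0$ then, as recalled in Section~\ref{diri}, Bohr's theorem holds for $\lambda$, while nuclearity of $\mathcal{H}_{p,+}(\lambda)$ again follows from \ref{nuclear22}$\Rightarrow$\ref{nuclear23} of Proposition~\ref{nuclear2} applied to $\mathcal{H}_{p}(\lambda)$. For the direction from left to right: suppose that $\mathcal{H}_{p,+}(\lambda)$ is nuclear and that Bohr's theorem holds for $\lambda$. Because Bohr's theorem holds, Proposition~\ref{basis}\ref{basis2} (for $p=1$) and Proposition~\ref{basis}\ref{basis3} (for $p=\infty$) guarantee that the monomials form a basis of $\mathcal{H}_{p,+}(\lambda)$. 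Thus condition \ref{nuclear21} of Proposition~\ref{nuclear2} holds with $\mathfrak{X}(\lambda)=\mathcal{H}_{p}(\lambda)$, and we conclude $L(\lambda)=0$.

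The argument is essentially an assembly of previously established results, so there is no genuine analytic obstacle; the one point requiring care is the structural asymmetry between the reflexive range and the endpoints. For $1<p<\infty$ the monomials are unconditionally a basis, so nuclearity by itself pins down $L(\lambda)=0$. At the endpoints $p=1,\infty$ the monomials form a basis only under Bohr's theorem, and since Proposition~\ref{nuclear2} characterizes nuclearity precisely in the presence of a monomial basis, Bohr's theorem must be carried along as a hypothesis in the left-to-right direction --- which is exactly why the statement bundles nuclearity together with Bohr's theorem for these two values of $p$.
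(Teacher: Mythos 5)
Your proof is correct and follows essentially the same route as the paper: the paper's own argument is exactly this assembly of Proposition~\ref{nuclear2} (applied with $\mathfrak{X}(\lambda)=\mathcal{H}_{p}(\lambda)$), the completeness from Proposition~\ref{projlimitH_p}, the basis statements of Proposition~\ref{basis}, and the fact that $L(\lambda)=0$ implies Bohr's theorem. Your write-up is merely more explicit about which implication of Proposition~\ref{nuclear2} is used in each direction, and your closing remark on the endpoint asymmetry matches the paper's reason for bundling Bohr's theorem into the statement for $p\in\{1,\infty\}$.
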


As we already pointed out in Section~\ref{sec:nuc}, if $L(\lambda)=0$ then all $\mathcal{H}_{p,+}(\lambda)$ coincide for $1 < p< \infty$.

%
%
%
%
%

\subsubsection{Translation}

An important fact within the theory of Hardy spaces of ordinary Dirichlet series is that the translation operator  $\tau_{\sigma}$,  defined for each $\sigma > 0$ as
\[
\tau_{\sigma} \big( \textstyle\sum a_n n^{-s} \big) = \displaystyle\sum \frac{a_n}{n^{\sigma}} n^{- s}\,,
\]
for every $1 \leq p < q < \infty$ is bounded as an operator from $\mathcal{H}_{p} = \mathcal{H}_{p}((\log n))$ into $\mathcal{H}_{q} = \mathcal{H}_{q}((\log n))$. This has as an immediate consequence that $\sigma_{\mathcal{H}_{p}}(D)= \sigma_{\mathcal{H}_{q}}(D)$ for every $1 \leq p,q < \infty$
and $D \in \mathfrak{D}((\log n))$
(see \cite[Chapter~12]{defant2018Dirichlet} for more details). Then we obtain as an immediate consequence that $\mathcal{H}_{p,+} = \mathcal{H}_{q,+}$ for every $1 \leq p,q, < \infty$, and, there is only one such space, denoted $\mathcal{H}_{+}$, which can be taken as $\mathcal{H}_{2,+}$ (see \cite{FVGaMeSe_20}). \\

We address now an analogous question for general Dirichlet series. For $\sigma \in \mathbb{R}$ we define the translation operator as
\[
\tau_{\sigma} \big( \textstyle\sum a_n e^{-\lambda_n s} \big) = \displaystyle\sum a_n e^{-\lambda_n \sigma} e^{-\lambda_n s}\,.
\]
Then we say that the frequency $\lambda$ is hypercontractive (for the translation operator) if, for every $\sigma >0$, the operator $\tau_{\sigma}: \mathcal{H}_{p} (\lambda) \to \mathcal{H}_{q}(\lambda)$ is bounded for every $1 \leq p \leq q < \infty$.

\begin{remark} \label{zum}
 It is obvious that a given  frequency $\lambda$ is   hypercontractive if and only if $\sigma_{\mathcal{H}_{p}(\lambda)}(D) = \sigma_{\mathcal{H}_{q}(\lambda)}(D)$
for every $D \in \mathfrak{D}(\lambda)$ and $1 \leq  p < q < \infty$, if and only if $\mathcal{H}_{p,+}(\lambda)= \mathcal{H}_{q,+}(\lambda)$ for every choice of  $1 \leq  p,q < \infty$.

In \cite{bayart_hyper} it is shown that there exist non-hypercontractive frequencies. More precisely, there is  a frequency $\lambda$ satisfying Bohr's condition  so that $\tau_{\sigma} : \mathcal{H}_1(\lambda) \to \mathcal{H}_2(\lambda)$
is not bounded for every $\sigma >0$.  In particular, $\mathcal{H}_{2,+}(\lambda) \varsubsetneqq \mathcal{H}_{1,+}(\lambda)$.
\end{remark}

Our aim now is to find conditions that imply that the frequency is hypercontractive for the translation operator.

\begin{remark} \label{zum2}
If $L(\lambda) =0$ or $\lambda$ is $\mathbb{Q}$-linearly indedependent, then $\lambda$ is hypercontractive.
Indeed, in both cases we  by Proposition~\ref{nuclear2}  and Example~\ref{galdos} know that all Fr\'echet spaces $\mathcal{H}_{p,+}(\lambda)$
coincide (as sequence spaces).
\end{remark}

We recall  that for each $0 < \eta < 1$ and $1 \leq p \leq q < \infty$ there is a bounded operator
$T_{\eta} : H_{p}(\mathbb{T}) \to H_{q}(\mathbb{T})$
such that
\[
T_{\eta} \Big( \sum_{k=0}^{n} c_{k} z^{k} \Big) = \sum_{k=0}^{n} c_{k} (\eta z)^{k}
\]
 Furthermore, $\Vert T_{\eta} \Vert \leq 1$ for every $\eta < \sqrt{p/q}$ (see e.g. \cite[Proposition~8.11]{defant2018Dirichlet}). For $N \in \mathbb{N} \cup \{\infty\}$ we know  from \cite[Theorem~12.10]{defant2018Dirichlet} that, if
 $\eta = (\eta_{k})_{1 \leq k \leq N} \subseteq (0,1)$ is such that $\sup_{n} \prod_{k=1}^{n} \Vert T_{\eta_{k}} \Vert < \infty$ (note that this is trivially satisfied if $N$ is finite), then there exists an operator
\begin{equation} \label{erloschte}
T_{\eta} : H_{p}(\mathbb{T}^{N}) \to H_{q}(\mathbb{T}^{N})
\end{equation}
so that
\begin{equation} \label{schar}
T_{\eta} \Big( \sum_{\alpha \in F \atop F \text{finite}} c_{\alpha} z^{\alpha} \Big) = \sum_{\alpha \in F \atop F \text{finite}} c_{\alpha} (\eta z)^{\alpha} \,,
\end{equation}
and $\Vert T_{\eta} \Vert \leq \sup_{n} \prod_{k=1}^{n} \Vert T_{\eta_{k}} \Vert$. If $\Lambda \subseteq \mathbb{N}_{0}^{N}$ (if $N = \infty$ this should be understood as $\mathbb{N}_{0}^{(\mathbb{N})}$)
we consider
\[
H_{p}^{\Lambda} (\mathbb{T}^{N}) = \{ f \in  H_{p} (\mathbb{T}^{N}) \colon \widehat{f}(\alpha)  \neq 0 \curvearrowright \alpha \in \Lambda  \} \,,
\]
which as a closed subspace of $H_{p} (\mathbb{T}^{N})$ is again a Banach space. A straightforward argument using \eqref{schar} and the density in $H_{p}^{\Lambda} (\mathbb{T}^{N})$ of the trigonometric polynomials with coefficients indexed on $\Lambda$ (see \cite[Theorem~3.14]{defantschoolmann2019Hptheory}) gives
\begin{equation} \label{himmel}
T_{\eta}  \big( H_{p}^{\Lambda} (\mathbb{T}^{N}) \big) \subseteq H_{q}^{\Lambda} (\mathbb{T}^{N}) \,
\end{equation}
and
\[
\Vert T_{\eta} : H_{p}^{\Lambda}(\mathbb{T}^{N}) \to H_{q}^{\Lambda}(\mathbb{T}^{N}) \Vert
\leq \Vert T_{\eta} : H_{p}(\mathbb{T}^{N}) \to H_{q}(\mathbb{T}^{N}) \Vert \,.
\]

\bigskip

Let us recall from Section~\ref{sec.montel} (see the comments preceding Corollary~\ref{abschnitt}) that, given a frequency $\lambda = (\lambda_{n})_{n \in \mathbb{N}}$,
there is a decomposition $\lambda = (R,B)$,
where $B=(b_{j})_{1 \leq j \leq  N}$ (for $N\in \mathbb{N}$ or $N = \infty$) is the basis and
$R=(r_{j}^{n})_{1 \leq j \leq  N}^n$ the Bohr matrix of $\lambda$.
A frequency $\lambda$ is said to be of natural type if each entry of $R$ is in $\mathbb{N}_{0}$, and  in this case  each row $\alpha$ of $R$ (we write $\alpha \in R$) may be considered as a finite
sequence in $\mathbb{N}_0^{(\mathbb{N})}$ (so $R \subset \mathbb{N}_0^{(\mathbb{N})}$).\\
Given a frequency  $\lambda$ of natural type, the Bohr transform $\mathfrak{B}$ defines an isometric isomorphism between $\mathcal{H}_{p}(\lambda)$ and $H_{p}^{R}(\mathbb{T}^{N})$.  More precisely, there is a unique onto isometry
\[
\mathfrak{B}: \mathcal{H}_{p}(\lambda) \to H_{p}^{R}(\mathbb{T}^{N})
\]
such for each $\alpha \in R$ and $n \in \mathbb{N}$ with $\lambda_n = \sum \alpha_j b_j$ we have that $\widehat{f}(\alpha) = a_n$
for all $D = \sum a_n e^{-\lambda_n s} \in \mathcal{H}_{p}(\lambda)$
and $f \in H_{p}^{R}(\mathbb{T}^{N})$ with $f= \psi(D)$ (see \cite[Theorem~3.31]{defantschoolmann2019Hptheory}).

\begin{theorem} \label{jubiliert}
Let $\lambda$ be a frequency with a decomposition $(B,R)$ of natural type so that $b_{j} >0$ for every $j$ and (if $B$ is infinite) $\lim_{j} b_{j} = \infty$.
Then, $\lambda$ is hypercontractive.
\end{theorem}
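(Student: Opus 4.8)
The plan is to transport the entire question to the polytorus $\mathbb{T}^{N}$ by means of the Bohr transform, where the translation operator $\tau_{\sigma}$ becomes precisely one of the diagonal multiplier operators $T_{\eta}$ recalled in \eqref{erloschte}--\eqref{himmel}. Once this identification is in place, hypercontractivity reduces to checking that the summability condition $\sup_{n}\prod_{k=1}^{n}\Vert T_{\eta_{k}}\Vert<\infty$ is satisfied, and this is exactly the point where the hypotheses $b_{j}>0$ and $\lim_{j}b_{j}=\infty$ are consumed.

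First I would fix $\sigma>0$ and exponents $1\leq p\leq q<\infty$, and define $\eta=(\eta_{j})_{1\leq j\leq N}$ by $\eta_{j}:=e^{-b_{j}\sigma}$. Since every $b_{j}>0$, each $\eta_{j}$ lies in $(0,1)$, so the single-variable operators $T_{\eta_{j}}\colon H_{p}(\mathbb{T})\to H_{q}(\mathbb{T})$ are all bounded. The key algebraic observation is that, under the Bohr transform, $\tau_{\sigma}$ is carried to $T_{\eta}$ with this particular $\eta$: for a $\lambda$-Dirichlet polynomial $D=\sum a_{n}e^{-\lambda_{n}s}$ and the row $\alpha\in R$ with $\lambda_{n}=\sum_{j}\alpha_{j}b_{j}$ one has $\eta^{\alpha}=\prod_{j}e^{-b_{j}\sigma\alpha_{j}}=e^{-\sigma\lambda_{n}}$, so that $T_{\eta}(\mathfrak{B}(D))=\mathfrak{B}(\tau_{\sigma}D)$ by \eqref{schar}. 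Thus on the dense subspace of polynomials the diagram $\mathfrak{B}\circ\tau_{\sigma}=T_{\eta}\circ\mathfrak{B}$ commutes, where on the left we use the Bohr transform into $H_{p}^{R}(\mathbb{T}^{N})$ and on the right the one into $H_{q}^{R}(\mathbb{T}^{N})$.

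The main obstacle, and the only genuinely quantitative step, is to verify the hypothesis required to invoke \eqref{erloschte}, namely $\sup_{n}\prod_{k=1}^{n}\Vert T_{\eta_{k}}\Vert<\infty$. Recall that $\Vert T_{\eta_{k}}\Vert\leq 1$ as soon as $\eta_{k}<\sqrt{p/q}$, i.e. whenever $b_{k}>\tfrac{1}{2\sigma}\log(q/p)$. If $B$ is finite this is a product of finitely many finite factors and there is nothing to check; if $B$ is infinite, then because $\lim_{j}b_{j}=\infty$ only finitely many indices $k$ fail the inequality $b_{k}>\tfrac{1}{2\sigma}\log(q/p)$. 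Writing each partial product as the product over these finitely many ``bad'' indices (each contributing a fixed finite factor $\Vert T_{\eta_{k}}\Vert<\infty$) times the product over the remaining ``good'' indices (each contributing a factor $\leq 1$), one sees the partial products are bounded, uniformly in $n$, by the single finite constant $\prod_{\text{bad }k}\Vert T_{\eta_{k}}\Vert$. Hence $\sup_{n}\prod_{k=1}^{n}\Vert T_{\eta_{k}}\Vert<\infty$.

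With this estimate in hand, \eqref{erloschte} yields a bounded operator $T_{\eta}\colon H_{p}(\mathbb{T}^{N})\to H_{q}(\mathbb{T}^{N})$, and \eqref{himmel} guarantees $T_{\eta}\big(H_{p}^{R}(\mathbb{T}^{N})\big)\subseteq H_{q}^{R}(\mathbb{T}^{N})$. Composing with the isometric Bohr isomorphisms, the bounded operator $\mathfrak{B}^{-1}\circ T_{\eta}\circ\mathfrak{B}\colon\mathcal{H}_{p}(\lambda)\to\mathcal{H}_{q}(\lambda)$ agrees with $\tau_{\sigma}$ on polynomials, hence (by density and continuity) equals $\tau_{\sigma}$. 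Therefore $\tau_{\sigma}\colon\mathcal{H}_{p}(\lambda)\to\mathcal{H}_{q}(\lambda)$ is bounded for every $\sigma>0$ and every $1\leq p\leq q<\infty$, which is precisely the assertion that $\lambda$ is hypercontractive.
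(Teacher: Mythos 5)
Your proposal is correct and follows essentially the same route as the paper's own proof: the choice $\eta_{j}=e^{-b_{j}\sigma}$, the use of $b_{j}>0$ and $\lim_{j}b_{j}=\infty$ to bound $\sup_{n}\prod_{k=1}^{n}\Vert T_{\eta_{k}}\Vert$ via the $\eta_{k}<\sqrt{p/q}$ criterion, the passage through \eqref{erloschte} and \eqref{himmel}, and the identification $\tau_{\sigma}=\mathfrak{B}^{-1}\circ T_{\eta}\circ\mathfrak{B}$ checked on polynomials and extended by density. The only cosmetic difference is that the paper spells out that the density argument also uses continuity of the coefficient functionals (convergence in $\mathcal{H}_{p}(\lambda)$ implies coefficientwise convergence), which is implicit in your phrase ``by density and continuity.''
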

\begin{proof}
Fix some $1 \leq p \leq q < \infty$ and $\sigma >0$ and let us define $\eta_{j} = e^{- b_{j} \sigma}$ for each $1 \leq j \leq N$. Since all $b_{j}$ are positive, we have $0 < \eta_{j} < 1$ for every $j$. If $B$ is finite and has length $N$, then by \eqref{erloschte} and \eqref{himmel} we have a continuous operator
\[
T_{\eta} : H_{p}^{R}(\mathbb{T}^{N}) \to H_{q}^{R}(\mathbb{T}^{N}) \,.
\]
If $B$ is infinite, the fact that $\lim_{j} b_{j} = \infty$ implies that we  find some $j_{0}$ so that $\eta_{j} < \sqrt{p/q}$ for every $j \geq j_{0}$. Then $\sup_{n} \prod_{j=1}^{n} \Vert T_{\eta_{j}} \Vert \leq \prod_{j=1}^{j_{0}} \Vert T_{\eta_{j}} \Vert$ and we have a bounded operator $T_{\eta} : H_{p}(\mathbb{T}^{\infty}) \to H_{q}(\mathbb{T}^{\infty})$. This and \eqref{himmel} again gives
\[
T_{\eta} : H_{p}^{R}(\mathbb{T}^{\infty}) \to H_{q}^{R}(\mathbb{T}^{\infty}) \,.
\]
We now consider the bounded operator $\tau_{\sigma} = \mathfrak{B}^{-1} \circ T_{\eta} \circ \mathfrak{B} : \mathcal{H}_{p} (\lambda) \to \mathcal{H}_{q} (\lambda)$. Let us see that this is exactly the translation operator that we are looking for. To do this we look first at Dirichlet polynomials. Given $\sum_{n=1}^{k} a_{n} e^{-\lambda_{n} s}$ we write $c_{\alpha} = a_{n}$ if $\sum_{j} \alpha_{j} b_{j} = \lambda_{n}$ and  have
\[
\mathfrak{B} \Big( \sum_{n=1}^{k} a_{n} e^{-\lambda_{n} s} \Big)
= \sum_{\alpha \in R \atop \lambda_{1} \leq \sum_{j} \alpha_{j} b_{j} \leq \lambda_{k}} c_{\alpha} z^{\alpha} \,.
\]
The latter is a finite sum, and \eqref{schar} gives
\begin{multline*}
\mathfrak{B}^{-1} \circ T_{\eta} \Big( \sum_{\alpha \in R \atop \lambda_{1} \leq \sum_{j} \alpha_{j} b_{j} \leq \lambda_{k}} c_{\alpha} z^{\alpha} \Big)
= \mathfrak{B}^{-1} \Big( \sum_{\alpha \in R \atop \lambda_{1} \leq \sum_{j} \alpha_{j} b_{j} \leq \lambda_{k}} c_{\alpha} (\eta z)^{\alpha} \Big) \\
= \mathfrak{B}^{-1} \Big( \sum_{\alpha \in R \atop \lambda_{1} \leq \sum_{j} \alpha_{j} b_{j} \leq \lambda_{k}} c_{\alpha} e^{ - \sum_{j} b_{j} \alpha_{j} \sigma}  z^{\alpha} \Big)
= \sum_{n=1}^{k} a_{n} e^{-\lambda_{n} \sigma} e^{-\lambda_{n} s} \,.
\end{multline*}
This yields our claim for Dirichlet polynomials, but these are dense in $\mathcal{H}_{p}(\lambda)$ (see \cite[Theorem~3.26]{defantschoolmann2019Hptheory}). A standard argument using the density and the fact that convergence in $\mathcal{H}_{p}(\lambda)$ implies convergence (in $\mathbb{C}$) of the coefficients completes the proof.
\end{proof}

\begin{remark} \label{rejoice}
The frequencies $(\log n)$ and $(n)$ trivially satisfy the conditions in Theorem~\ref{jubiliert}. Also, if the frequency $\lambda$ is $\mathbb{Q}$-linearly independent (as, for example $(\log p_{n})$, being $(p_{n})$ the sequence of prime numbers), then one can just take $B=\lambda$ and $R$ given by $r_{j}^{n}=\delta_{j,n}$ to see that it satisfies the conditions in Theorem~\ref{jubiliert}.
As a straightforward consequence (see Remark~\ref{zum}), for each of these frequencies all the spaces $\mathcal{H}_{p,+}(\lambda)$ (with $1 \leq p < \infty$) are all isomorphic to each other as Fr\'echet spaces.
\end{remark}

\subsection{Almost periodic functions}

As we already pointed out in Section~\ref{prelim}, general Dirichlet series and uniformly almost functions are closely related. More precisely, we know from  \cite[Theorem~2.16]{defant2020riesz} (see also Theorem~\ref{equivalence2}) that there is an
isomorphism preserving Bohr and Dirichlet coefficients so that $\HRe = \mathcal{H}_{\infty} (\lambda)$. Our aim now is to find an analogous description for  $\mathcal{H}_{\infty,+}(\lambda)$. The first step is to find the proper space of almost periodic
functions, and to endow it with a convenient locally convex topology.
We denote by
\[
\HRep
\]
the space of all holomorphic functions
$f: [\re  >0] \to \mathbb{C}$ which are uniformly almost periodic on each abscissa $[\re = \sigma]$ and such that the $x$th Bohr coefficients of $f$ (recall \eqref{Bohrcoeffholo})
vanishes, whenever $x \notin \{ \lambda_n\mid n \in \mathbb{N}\}$.  Each such function is then bounded on every half plane $[\re > \varepsilon]$ (see \cite[Chapter~III, \S~3]{Be54}), and hence we may  endow $\HRep$  with the Fr\'echet topology given by the family of norms
\begin{equation} \label{normk}
\Vert f \Vert_{\infty,k} = \sup_{\re s > \frac{1}{k}} \vert f(s) \vert \,.
\end{equation}

\subsubsection{Projective description}

Again it is convenient to find proper projective descriptions of $\HRep$.
Consider first for each $k$ the Banach space
\[
\HRek{k} = \Big\{ f: [\re  > 1/k] \to \mathbb{C} \text{ holomorphic } \colon f ( \punkt + \tfrac{1}{k} ) \in \HRe  \Big\}
\]
endowed with the norm defined in \eqref{normk}. Then we get the projective spectrum
\[
\big(\HRek{k}, i_k\big)_{k \in \mathbb{N}}\,,
\]
where  the linking maps are  the restrictions
\[
i_k: \HRek{(k+1)} \hookrightarrow \HRek{k} \text{ given by } f\mapsto f|_{[\re >1/k]} \,.
\]
Given $f \in \HRek{k}$ the Bohr coefficients of $f$ (recall once again ~\eqref{Bohrcoeffholo}) are
\[
a_{\lambda_{n}}(f)=\lim_{T \to \infty} \frac{1}{2T} \int_{-T}^{T} f(\sigma+it)e^{(\sigma+it)\lambda_{n}} dt \,,
\]
where $\sigma>1/k$ is arbitrary (and the definition is independent of the chosen $\sigma$). Observe that with this definition the Bohr coefficients of $f\in \HRek{(k+1)}$ and $i_{k}(f)\in \HRek{k} $ coincide.\\

As in Section~\ref{baronnoir} we have a second possible projective spectrum that serves our purposes. To begin with, note that for each $k$, the mapping
\begin{equation*}
\varphi_k: \HRek{k}\to\HRe \, \text{ defind by } \,f \mapsto  f \big( \punkt + 1/k \big)
\end{equation*}
is an isometric bijection, where the inverse is given by $\varphi_{k}^{-1}(f)=f \big( \punkt - 1/k \big)$. Then we can consider the projective spectrum
\begin{equation*}
\big(\HRe, \tau_k  \big)_{k \in \mathbb{N}}\,,
\end{equation*}
where $\tau_k: \HRe \to \HRe$ is defined by
\[
f \mapsto f\big(\punkt + 1/k-1/(k+1)\big)\,.
\]

\medskip

\begin{proposition} \label{projlimit2}
Let $\lambda$ be a frequency. Then  $\HRe$ is a Fr\'echet-Schwartz space. Also, the mappings
\[
\HRep =  \proj\big(\HRek{k}, i_k\big) \, \text{ given by } \,
f \mapsto (f|_{[\re >1/k]})_{k=1}^\infty
\]
and
\[
\HRep =  \proj\big(\HRe, \tau_k\big)
 \, \text{ given by } \, f \mapsto \big(f ( \punkt + 1/k )\big)_{k=1}^\infty
 \]
are isomorphisms of Fr\'echet spaces.
\end{proposition}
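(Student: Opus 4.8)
The plan is to follow the proofs of Propositions~\ref{projlimitX} and~\ref{projlimitH_p}, the only genuinely new ingredient being the identification of the abstract projective limit with the concrete function space $\HRep$ (and, correspondingly, the Fr\'echet--Schwartz property of $\HRep$).

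First I would check that the two spectra are equivalent in the sense of Remark~\ref{proj}. Each $\varphi_k$ is by construction an isometric bijection with inverse $f \mapsto f(\punkt - 1/k)$, and a short computation shows that both $\tau_k \circ \varphi_{k+1}$ and $\varphi_k \circ i_k$ send an $f \in \HRek{(k+1)}$ to $f(\punkt + 1/k)$, so that $\tau_k \circ \varphi_{k+1} = \varphi_k \circ i_k$ for every $k$. By Remark~\ref{proj} the induced map between the two projective limits is then an isomorphism of pre-Fr\'echet spaces, and hence it suffices to treat the first representation.

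Next I would exhibit the isomorphism $\Phi \colon \HRep \to \proj(\HRek{k}, i_k)$ given by $f \mapsto (f|_{[\re > 1/k]})_k$. Well-definedness needs $f|_{[\re > 1/k]} \in \HRek{k}$ for each $k$: since $f \in \HRep$ is bounded on $[\re > 1/k]$, the shift $f(\punkt + 1/k)$ is bounded and holomorphic on $[\re > 0]$, uniformly almost periodic on every vertical line, and has Bohr coefficients $a_{\lambda_n}(f)e^{-\lambda_n/k}$ supported on $\{\lambda_n\}$, whence $f(\punkt + 1/k) \in \HRe$; compatibility $i_k(f|_{[\re > 1/(k+1)]}) = f|_{[\re > 1/k]}$ is automatic. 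Injectivity is clear since $\bigcup_k [\re > 1/k] = [\re > 0]$. For surjectivity, an element $(g_k)$ of the limit satisfies $g_{k+1}|_{[\re > 1/k]} = g_k$, so the $g_k$ glue to a holomorphic $g$ on $[\re > 0]$; on a line $[\re = \sigma]$ with $1/k < \sigma$ the function $g$ agrees with $g_k$, which by $g_k \in \HRek{k}$ is uniformly almost periodic there with Bohr coefficients supported on $\{\lambda_n\}$, so $g \in \HRep$ and $\Phi(g) = (g_k)$. Finally, since $\|\pi_m(\Phi f)\|_{\HRek{m}} = \sup_{\re s > 1/m}|f(s)| = \|f\|_{\infty,m}$ and these increase in $m$, the canonical seminorm from \eqref{clementi} equals $\|f\|_{\infty,n}$; thus $\Phi$ identifies the norms \eqref{normk} with the projective-limit seminorms and is a topological isomorphism.

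It remains to see that $\HRep$ is Fr\'echet--Schwartz. Completeness is automatic, since each $\HRek{k}$ is (via $\varphi_k$) isometric to the Banach space $\HRe = \mathcal{H}_\infty(\lambda)$ of Theorem~\ref{equivalence2}, and a countable projective limit of Banach spaces is Fr\'echet. For the Schwartz property I would use the representation $\proj(\HRe, \tau_k)$: under $\HRe = \mathcal{H}_\infty(\lambda)$ each $\tau_k$ is the translation $\tau_{1/k-1/(k+1)}$ by a strictly positive amount, hence compact by Corollary~\ref{uhu} (the case $p=\infty$), and Remark~\ref{proj1} then gives that the limit is Schwartz. I expect the only step requiring genuine care to be surjectivity, i.e.\ verifying that the glued limit inherits uniform almost periodicity on each line and the correct support of its Bohr coefficients; but this reduces, line by line, to the membership $g_k \in \HRek{k}$ already encoded in the spectrum.
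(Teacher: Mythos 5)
Your proof is correct and follows essentially the same route as the paper: reduce to a single spectrum via Remark~\ref{proj}, identify $\HRep$ with $\proj\big(\HRek{k}, i_k\big)$ by the argument of Proposition~\ref{projlimitX} (whose concrete function-space details — gluing, almost periodicity line by line, matching of seminorms — you fill in faithfully), get completeness from the building blocks being Banach, and get the Schwartz property from compactness of the translations $\tau_k$ together with Remark~\ref{proj1}. The only cosmetic difference is the citation for compactness: you use Corollary~\ref{uhu} (case $p=\infty$) combined with Theorem~\ref{equivalence2}, whereas the paper invokes the Montel theorem (Theorem~\ref{teo montel2}) directly — but these are the same fact, since the $p=\infty$ case of Corollary~\ref{uhu} is itself deduced from the Montel theorem and Theorem~\ref{equivalence2}.
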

\begin{proof}
Both projective descriptions follow exactly as in the proof of Proposition~\ref{projlimitX}.
In particular, looking at the second one  and taking into account that all spaces $\HRe$ are Banach, we
deduce from Theorem~\ref{teo montel2} that $\tau_{k}$ is compact for every $k$ and therefore $\HRep$ is a Fr\'echet-Schwartz space.
\end{proof}

\subsubsection{Coincidence}

We are now ready to show that an isomorphism as in Theorem~\ref{equivalence2} identifying coefficients also exists between $ \HRep$ and $\mathcal{H}_{\infty,+}(\lambda)$.

\begin{theorem} \label{piazzola}
The identification
\[
 \HRep =  \mathcal{H}_{\infty,+}(\lambda) \, \text{ given by } \, f \mapsto \sum a_{\lambda_{n}}(f)e^{-\lambda_{n}s},
\]
is a coefficient preserving isomorphism of Fr\'echet spaces.
\end{theorem}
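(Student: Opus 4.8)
The plan is to deduce the statement from the two ``second'' projective descriptions already at our disposal, namely $\HRep = \proj\big(\HRe, \tau_k\big)$ from Proposition~\ref{projlimit2} and $\mathcal{H}_{\infty,+}(\lambda) = \proj\big(\mathcal{H}_{\infty}(\lambda), \tau_k\big)$ from Proposition~\ref{projlimitH_p}, together with the level-wise identification provided by Theorem~\ref{equivalence2}. Since both spaces are realised as countable projective limits of Banach spaces, it suffices, by Remark~\ref{proj}, to exhibit a family of isometric isomorphisms between the rungs of the two spectra that intertwines the corresponding linking maps; the operator induced on the projective limits is then automatically an isomorphism of Fr\'echet spaces, and the completeness part is granted by the two propositions quoted above.

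First I would take, at every level $k$, the isometric bijection $S\colon \HRe \to \mathcal{H}_{\infty}(\lambda)$ given by $f \mapsto \sum a_{\lambda_n}(f) e^{-\lambda_n s}$ from Theorem~\ref{equivalence2}; it is one and the same map for all $k$, and it preserves both Bohr and Dirichlet coefficients. The only substantive point is then to verify the compatibility relation $S \circ \tau_k = \tau_k \circ S$ required in Remark~\ref{proj}, where on the left $\tau_k$ is the translation $f \mapsto f\big(\punkt + (1/k - 1/(k+1))\big)$ on $\HRe$ and on the right $\tau_k$ is $\sum a_n e^{-\lambda_n s} \mapsto \sum a_n e^{-\lambda_n(1/k - 1/(k+1))} e^{-\lambda_n s}$ on $\mathcal{H}_{\infty}(\lambda)$. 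Writing $\delta = 1/k - 1/(k+1)$, this reduces to the elementary fact, already recorded in the proof of Lemma~\ref{lemma0.1}, that translating $f$ by $\delta$ multiplies its $n$th Bohr coefficient by $e^{-\lambda_n \delta}$, while $\tau_k$ on the Dirichlet side multiplies the $n$th Dirichlet coefficient by the very same factor. Because $S$ matches Bohr with Dirichlet coefficients, both compositions send $f$ to $\sum a_{\lambda_n}(f) e^{-\lambda_n \delta} e^{-\lambda_n s}$, so the intertwining holds and Remark~\ref{proj} yields an isomorphism $\proj\big(\HRe, \tau_k\big) \to \proj\big(\mathcal{H}_{\infty}(\lambda), \tau_k\big)$.

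It then remains to identify this induced isomorphism with the map in the statement, which is pure bookkeeping through the two descriptions. Under Proposition~\ref{projlimit2} an $f \in \HRep$ corresponds to the thread $\big(f(\punkt + 1/k)\big)_{k}$; applying $S$ coordinatewise produces $\big(\sum a_{\lambda_n}(f) e^{-\lambda_n/k} e^{-\lambda_n s}\big)_{k} = (D_{1/k})_{k}$ with $D = \sum a_{\lambda_n}(f) e^{-\lambda_n s}$, and by Proposition~\ref{projlimitH_p} this thread represents exactly $D \in \mathcal{H}_{\infty,+}(\lambda)$. Hence the induced isomorphism is precisely $f \mapsto \sum a_{\lambda_n}(f) e^{-\lambda_n s}$, and it is coefficient preserving since $S$ is. I do not expect a genuine obstacle here: the argument is almost entirely formal, the single check requiring attention being the commutation $S \circ \tau_k = \tau_k \circ S$, which hinges only on the coefficient behaviour of $S$ under translation and is guaranteed by Theorem~\ref{equivalence2}.
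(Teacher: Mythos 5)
Your proof is correct, and it reaches the statement by a genuinely different (though closely related) route: it runs the argument through the other projective description of the two spaces. The paper works with the first spectra, $\proj\big(\HRek{k}, i_k\big)$ and $\proj\big(\mathcal{H}_{\infty,k}(\lambda), i_k\big)$: there one must construct, for each $k$, an isometric bijection $S_k\colon \HRek{k}\to\mathcal{H}_{\infty,k}(\lambda)$, which is done by shifting (pass to $g:=f(\punkt+1/k)\in\HRe$, apply Theorem~\ref{equivalence2} to $g$, shift back), while the compatibility with the linking maps $i_k$ is automatic because restriction leaves Bohr coefficients unchanged. You instead use the translation spectra $\proj\big(\HRe,\tau_k\big)$ and $\proj\big(\mathcal{H}_{\infty}(\lambda),\tau_k\big)$, so that every rung map is one and the same isometry $S$ of Theorem~\ref{equivalence2} and level-wise bijectivity comes for free; the burden migrates to the intertwining $S\circ\tau_k=\tau_k\circ S$, which you correctly reduce to the formula $a_{\lambda_n}\big(f(\punkt+\delta)\big)=a_{\lambda_n}(f)\,e^{-\lambda_n\delta}$ -- the very same translation formula that powers the paper's shifting argument. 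The two proofs therefore consume identical ingredients (Theorem~\ref{equivalence2}, Remark~\ref{proj}, and the behaviour of Bohr coefficients under translation); yours is marginally leaner, avoiding the auxiliary spaces $\HRek{k}$ and $\mathcal{H}_{\infty,k}(\lambda)$ and the level-wise surjectivity check, whereas the paper's version makes coefficient preservation visible directly on the spectra. Your concluding bookkeeping -- identifying the induced isomorphism with $f\mapsto\sum a_{\lambda_{n}}(f)e^{-\lambda_{n}s}$ via the explicit maps $f\mapsto\big(f(\punkt+1/k)\big)_{k}$ and $D\mapsto (D_{1/k})_{k}$ of Propositions~\ref{projlimit2} and~\ref{projlimitH_p} -- is sound, and since both spaces are complete by those propositions, the resulting isomorphism is indeed one of Fr\'echet spaces.
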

\begin{proof}
We begin by seeing that for each fixed $k$ the mapping
\[
S_{k}:  \HRek{k} \to \mathcal{H}_{\infty,k}(\lambda)
\]
defined by
\[
f \mapsto \sum a_{\lambda_n}(f)e^{-\lambda_n s}
\]
is an isometric bijection. Take any $f\in  \HRek{k}$ and observe that the function $g:=f(\punkt+1/k)$ belongs to $\HRe$ and has Bohr coefficients
\[
a_{\lambda_{n}}(g)=a_{\lambda_{n}}(f)e^{-\frac{\lambda_{n}}{k}}\,,
\]
for $n \in \mathbb{N}$. Hence by Theorem~\ref{equivalence2} the  Dirichlet series $\sum a_{\lambda_n}(f)e^{-\frac{\lambda_{n}}{k}}e^{-\lambda_{n}s}=\sum a_{\lambda_n}(g)e^{-\lambda_{n}s}$ belongs to $\mathcal{H}_{\infty}(\lambda)$, and so $\sum a_{\lambda_n}(f)e^{-\lambda_n s}  \in \mathcal{H}_{\infty,k}(\lambda)$ with
\[
\big\|\sum a_{\lambda_n}(f)e^{-\lambda_n s}  \big\|_{ \mathcal{H}_{\infty}(\lambda),k}
=\big\| a_{\lambda_n}(f)e^{-\frac{\lambda_{n}}{k}}e^{-\lambda_{n}s} \big\|_{ \mathcal{H}_{\infty}(\lambda)}
=\|g\|_{\infty}= \|f\|_{\infty,k}\,.
\]
This shows that $S_{k}$ is a well defined isometry. Conversely, if $\sum a_{n}e^{-\lambda_{n}s}\in \mathcal{H}_{\infty,k}(\lambda)$, then by definition and again  Theorem~\ref{equivalence2} we can
find some $g\in \HRe$ such that $a_{\lambda_{n}}(g)=a_{n}e^{-\frac{\lambda_{n}}{k}}$ for all $n$. Now the function $f:=g(\punkt -1/k)$ belongs to $\HRek{k}$ and has Bohr coefficients
\[
a_{\lambda_{n}}(f)=e^{\frac{\lambda_{n}}{k}}a_{\lambda_{n}}(g)=a_{n},
\]
for $n\in \mathbb{N}$. This shows that $S_{k}$ is surjective and, hence, an isometric bijection. Now Remark~\ref{proj} implies that the mapping
\[
 S: \proj \big(\HRek{k}, i_k\big) \to \proj (\mathcal{H}_{\infty,k}\big(\lambda),i_k\big)\, \text{ given by } \, (f_k) \mapsto (S_k(f_k))
\]
is a Fr\'echet isomorphism. Moreover, if $\rho_m$ and $\pi_m$  denote the canonical projections of the respective projective spectra into   $\HRek{m}$ and $\mathcal{H}_{\infty,m}(\lambda)$, we have
\[
\pi_m \circ S = S_m \circ \rho_m\,.
\]
Using the projective descriptions of the spaces given in Proposition~\ref{projlimit} and~\ref{projlimit2}, this immediately gives that for each $f\in \HRep$, the $n$th Dirichlet coefficient of $S(f)$ equals $a_{\lambda_{n}}(f)$.
\end{proof}

By  Propositions~\ref{basis} and~\ref{nuclearA} we get the following corollary.

\begin{corollary} \text{}
\begin{enumerate}[(i)]
\item The monomials  $\{ e^{-\lambda_{n} s} \}_{n}$ form a basis in $\HRep$ if and only if $\lambda$ satisfies Bohr's theorem.
\item $\HRep$ is nuclear and $\lambda$ satisfies Bohr's theorem if and only if  $L(\lambda)=0$.
\end{enumerate}
\end{corollary}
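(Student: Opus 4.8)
The plan is to deduce both statements directly from the coincidence Theorem~\ref{piazzola}, which provides a coefficient preserving isomorphism of Fr\'echet spaces
\[
\HRep = \mathcal{H}_{\infty,+}(\lambda)\,,
\]
and then to quote the corresponding assertions already established for $\mathcal{H}_{\infty,+}(\lambda)$ in Propositions~\ref{basis} and~\ref{nuclearA}. The point is that all the analytic work has been absorbed into Theorem~\ref{piazzola}, so what remains is to transport two isomorphism invariant properties across this identification.

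For \ref{corollaryfinal-i} (the basis statement), the key observation is that the isomorphism of Theorem~\ref{piazzola} sends $f \mapsto \sum a_{\lambda_n}(f) e^{-\lambda_n s}$ and is coefficient preserving; in particular the exponential $e^{-\lambda_n s} \in \HRep$ corresponds exactly to the monomial $e^{-\lambda_n s} \in \mathcal{H}_{\infty,+}(\lambda)$. Since being a Schauder basis is preserved by any topological isomorphism of locally convex spaces, the monomials form a basis in $\HRep$ if and only if they form a basis in $\mathcal{H}_{\infty,+}(\lambda)$. The latter is characterised by Proposition~\ref{basis}~\ref{basis3} as being equivalent to $\lambda$ satisfying Bohr's theorem, which yields~\ref{corollaryfinal-i}.

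For \ref{corollaryfinal-ii} (the nuclearity statement), I would use that nuclearity is likewise an isomorphism invariant, so $\HRep$ is nuclear if and only if $\mathcal{H}_{\infty,+}(\lambda)$ is nuclear, and of course the frequency condition ``$\lambda$ satisfies Bohr's theorem'' does not depend on which of the two isomorphic spaces one looks at. Hence the conjunction ``$\HRep$ is nuclear and $\lambda$ satisfies Bohr's theorem'' holds if and only if ``$\mathcal{H}_{\infty,+}(\lambda)$ is nuclear and $\lambda$ satisfies Bohr's theorem'' holds, and by Proposition~\ref{nuclearA}~(ii) (the case $p=\infty$) this is equivalent to $L(\lambda)=0$, giving~\ref{corollaryfinal-ii}.

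I do not anticipate any genuine obstacle here: the statement is purely a transfer of properties along an already constructed coefficient preserving Fr\'echet isomorphism, and both Schauder basis and nuclearity are invariant under such isomorphisms. The only point requiring a word of care is to record explicitly that the identification in Theorem~\ref{piazzola} carries monomials to monomials, so that the \emph{same} sequence $\{e^{-\lambda_n s}\}_n$ is under consideration on both sides; once this is noted, Propositions~\ref{basis} and~\ref{nuclearA} close the argument immediately.
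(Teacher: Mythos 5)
Your proposal is correct and takes essentially the same route as the paper, which obtains the corollary precisely by transporting Proposition~\ref{basis}~(iii) and Proposition~\ref{nuclearA}~(ii) across the coefficient preserving Fr\'echet isomorphism $\HRep = \mathcal{H}_{\infty,+}(\lambda)$ of Theorem~\ref{piazzola}. Your explicit observation that this identification sends the monomial $e^{-\lambda_n s}$ in $\HRep$ to the monomial $e^{-\lambda_n s}$ in $\mathcal{H}_{\infty,+}(\lambda)$ is exactly the (implicit) point that legitimises the transfer of the basis statement.
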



\begin{thebibliography}{10}

\bibitem{amerioprouse}
L.~Amerio and G.~Prouse.
\newblock {\em Almost-periodic functions and functional equations}.
\newblock Van Nostrand Reinhold Co., New York-Toronto, Ont.-Melbourne, 1971.

\bibitem{bayart2002hardy}
F.~Bayart.
\newblock Hardy spaces of {D}irichlet series and their composition operators.
\newblock {\em Monatsh. Math.}, 136(3):203--236, 2002.

\bibitem{bayart_hyper}
F.~Bayart.
\newblock Personal communication, 2020.

\bibitem{Be54}
A.~S. Besicovitch.
\newblock {\em Almost periodic functions}.
\newblock Dover Publications, Inc., New York, 1955.

\bibitem{Bohr}
H.~Bohr.
\newblock \"{U}ber die gleichm\"{a}\ss ige {K}onvergenz {D}irichletscher
  {R}eihen.
\newblock {\em J. Reine Angew. Math.}, 143:203--211, 1913.

\bibitem{BohrBemerkungen}
H.~{Bohr}.
\newblock {Einige Bemerkungen \"uber das Konvergenzproblem Dirichletscher
  Reihen.}
\newblock {\em {Rend. Circ. Mat. Palermo}}, 37:1--16, 1914.

\bibitem{bonet2018frechet}
J.~Bonet.
\newblock The {F}r\'{e}chet {S}chwartz algebra of uniformly convergent
  {D}irichlet series.
\newblock {\em Proc. Edinb. Math. Soc. (2)}, 61(4):933--942, 2018.

\bibitem{CaDeMaSc_VV}
D.~Carando, A.~Defant, F.~Marceca, and I.~Schoolmann.
\newblock Vector-valued general {D}irichlet series.
\newblock {\em arXiv preprint arXiv:2001.09656}, 2020.

\bibitem{defant2018Dirichlet}
A.~Defant, D.~Garc\'{\i}a, M.~Maestre, and P.~Sevilla-Peris.
\newblock {\em Dirichlet {S}eries and {H}olomorphic {F}unctions in {H}igh
  {D}imensions}, volume~37 of {\em New Mathematical Monographs}.
\newblock Cambridge University Press, Cambridge, 2019.

\bibitem{andreaspabloantonio}
A.~Defant, A.~P\'{e}rez, and P.~Sevilla-Peris.
\newblock A note on abscissas of {D}irichlet series.
\newblock {\em Rev. R. Acad. Cienc. Exactas F\'{\i}s. Nat. Ser. A Mat. RACSAM},
  113(3):2639--2653, 2019.

\bibitem{defant2019hardy}
A.~{Defant} and I.~{Schoolmann}.
\newblock {Hardy spaces of general Dirichlet series --- a survey}.
\newblock In {\em {Function spaces XII. Selected papers based on the
  presentations at the 12th conference, Krakow, Poland, July 9--14, 2018}},
  pages 123--149. Warsaw: Polish Academy of Sciences, Institute of Mathematics,
  2019.

\bibitem{defantschoolmann2019Hptheory}
A.~Defant and I.~Schoolmann.
\newblock $\mathcal{H}_p$-theory of general {D}irichlet series.
\newblock {\em J. Fourier Anal. Appl.}, 25(6):3220--3258, 2019.

\bibitem{defant2020riesz}
A.~Defant and I.~Schoolmann.
\newblock Riesz means in {H}ardy spaces on {D}irichlet groups.
\newblock {\em Math. Ann.}, 378(1-2):57--96, 2020.

\bibitem{defant2020variants}
A.~Defant and I.~Schoolmann.
\newblock Variants of a theorem of {H}elson on general {D}irichlet series.
\newblock {\em J. Funct. Anal.}, 279(5):108569, 37, 2020.

\bibitem{FVGaMeSe_20}
T.~Fern\'{a}ndez~Vidal, D.~Galicer, M.~Mereb, and P.~Sevilla-Peris.
\newblock Hardy space of translated {D}irichlet series.
\newblock {\em arXiv preprint arXiv:2003.04041}, 2020.

\bibitem{FlWl68}
K.~Floret and J.~Wloka.
\newblock {\em Einf\"{u}hrung in die {T}heorie der lokalkonvexen {R}\"{a}ume}.
\newblock Lecture Notes in Mathematics, No. 56. Springer-Verlag, Berlin-New
  York, 1968.

\bibitem{hardy2013general}
G.~H. Hardy and M.~Riesz.
\newblock {\em The general theory of {D}irichlet's series}.
\newblock Cambridge Tracts in Mathematics and Mathematical Physics, No. 18.
  Stechert-Hafner, Inc., New York, 1964.

\bibitem{HLS}
H.~Hedenmalm, P.~Lindqvist, and K.~Seip.
\newblock A {H}ilbert space of {D}irichlet series and systems of dilated
  functions in {$L^2(0,1)$}.
\newblock {\em Duke Math. J.}, 86(1):1--37, 1997.

\bibitem{jarchow2012locally}
H.~Jarchow.
\newblock {\em Locally convex spaces}.
\newblock B. G. Teubner, Stuttgart, 1981.
\newblock Mathematische Leitf\"{a}den. [Mathematical Textbooks].

\bibitem{Landau}
E.~Landau.
\newblock \"{U}ber die gleichm\"{a}\ss ige {K}onvergenz {D}irichletscher
  {R}eihen.
\newblock {\em Math. Z.}, 11(3-4):317--318, 1921.

\bibitem{meise1997introduction}
R.~Meise and D.~Vogt.
\newblock {\em Introduction to functional analysis}, volume~2 of {\em Oxford
  Graduate Texts in Mathematics}.
\newblock The Clarendon Press, Oxford University Press, New York, 1997.

\bibitem{pereyraward}
M.~C. Pereyra and L.~A. Ward.
\newblock {\em Harmonic analysis}, volume~63 of {\em Student Mathematical
  Library}.
\newblock American Mathematical Society, Providence, RI; Institute for Advanced
  Study (IAS), Princeton, NJ, 2012.
\newblock From Fourier to wavelets, IAS/Park City Mathematical Subseries.

\bibitem{queffelec2013diophantine}
H.~Queff\'{e}lec and M.~Queff\'{e}lec.
\newblock {\em Diophantine approximation and {D}irichlet series}, volume~2 of
  {\em Harish-Chandra Research Institute Lecture Notes}.
\newblock Hindustan Book Agency, New Delhi, 2013.

\bibitem{rudin_groups}
W.~Rudin.
\newblock {\em Fourier analysis on groups}.
\newblock Wiley Classics Library. John Wiley \& Sons, Inc., New York, 1990.
\newblock Reprint of the 1962 original, A Wiley-Interscience Publication.

\bibitem{schoolmann2018bohrA}
I.~Schoolmann.
\newblock {\em Hardy spaces of general {D}irichlet series and their maximal
  inequalities}.
\newblock PhD thesis, Carl von Ossietzky University of Oldenburg, 2020.

\bibitem{schoolmann2018bohr}
I.~Schoolmann.
\newblock On {B}ohr's theorem for general {D}irichlet series.
\newblock {\em Math. Nachr.}, 293(8):1591--1612, 2020.

\end{thebibliography}
\end{document}